\newtheorem{thm}{Theorem}[section]
\newtheorem{cor}[thm]{Corollary}
\newtheorem{lemma}[thm]{Lemma}
\newtheorem{prop}[thm]{Proposition}
\numberwithin{equation}{section}
\newtheorem{theoremletra}{Theorem}
\newtheorem{theorem}{Theorem}
\theoremstyle{definition}
\newtheorem{remark}[thm]{Remark}
\def\R{\mathbb{R}}
\def\N{\mathbb{N}}
\def\Z{\mathbb{Z}}
\def\P{\mathbf{P}}
\def\E{\mathbf{E}}
\def\V{\mathbf{V}}
\definecolor{verde}{rgb}{0.0, 0.5, 0.0}
\newcommand{\uno}{\mathop{\textbf{\Large 1}}}
\begin{document}

\title{Some arithmetic properties of P\'{o}lya's urn}
\date{\today}

\author{Jos\'{e} L. Fern\'{a}ndez}
\address{Departamento de Matem\'{a}ticas, Universidad Aut\'{o}noma  de Madrid, 28049 Madrid, Spain.}
\email{joseluis.fernandez@uam.es}

\author{Pablo Fern\'{a}ndez}
\address{Departamento de Matem\'{a}ticas, Universidad Aut\'{o}noma  de Madrid, 28049 Madrid, Spain.}
\email{pablo.fernandez@uam.es}

\thanks{Jos\'{e} L. Fern\'{a}ndez and Pablo Fern\'{a}ndez are partially supported by Fundaci\'{o}n Akusmatika.}

\begin{abstract}
 Following Hales (2018), the evolution of P\'{o}lya's urn may be interpreted as a walk, a P\'{o}lya walk, on the integer lattice $\mathbb{N}^2$.
 We study the visibility properties of P\'{o}lya's  walk or, equivalently, the  divisibility  properties of the composition of the urn. In particular,
 we are interested in  the asymptotic average time  that a P\'{o}lya walk  is visible from the origin, or, alternatively,  in the asymptotic proportion
 of draws so that the resulting composition of the urn is coprime. Via de Finetti's exchangeability theorem,  P\'{o}lya's walk appears as a mixture of standard
 random walks. This paper is a follow-up of Cilleruelo--Fern\'{a}ndez-Fern\'{a}ndez (2019),
  where similar questions were studied for standard random walks.
\end{abstract}

\subjclass[2010]{11A05, 60G09, 60G50}
\keywords{Visible points, P\'{o}lya's urn, exchangeability, P\'{o}lya' walk, random walk.}

	\maketitle

\begin{flushright}
\it In memoriam Javier Cilleruelo.
\end{flushright}



\section{Introduction} In this paper we extend
some  results of Cilleruelo, Fern\'{a}ndez and Fern\'{a}ndez, \cite{CFF}, which connect standard random walks in $\N^2$ with the visibility of points in $\N^2$ (or with the coprimality of pairs of positive integers), to the so called P\'{o}lya walk in $\N^2$.

The approach of this paper consists on expressing  P\'{o}lya's walk, via de Finetti's theorem, see Theorem \ref{teor:deFinetti}, as a mixture of standard $\alpha$-random walks in $\N^2$ with $\alpha$ ranging in the interval $(0,1)$. Visibility properties of standard $\alpha$-random walks were studied in~\cite{CFF}. A basic  tool, there and now,  is the second moment method, see Proposition \ref{prop:Borel}. To apply this method for P\'{o}lya's walk,  under the mixture representation, we need to determine the dependence upon $\alpha$ of the error terms of the  estimates of \cite{CFF} for the $\alpha$-random walks, which in that paper were inmaterial.

 \subsection{P\'{o}lya's walk in $\N^2$} Following Hales in \cite{Hales},  the \textit{P\'{o}lya walk} is a discrete time stochastic process $(\mathbb{Y}_n)_{n \ge 0}$ with values in the lattice $\N^2$ which geometrically codifies the evolution of P\'{o}lya's urn.

 The walk starts at some given deterministic initial  position  $\mathbb{Y}_0=(a_0,b_0)\in \N^2$.
For each $n\ge 0$, the jump of the walk $\mathbb{Y}_{n+1}-\mathbb{Y}_n$ can take only two values, $(1,0)$ and $ (0,1)$: the walk either  moves one unit upwards or one unit rightwards; for each $n \ge 0$, \textit{given} the position $\mathbb{Y}_n=(a_n,b_n)$ of the walk at time $n$, the conditional probabilities of the only two admissible  jumps are
 \smallskip
 \begin{equation*}\P\big(\mathbb{Y}_{n+1}-\mathbb{Y}_n=(1,0)\big)=\frac{a_n}{a_n+b_n}\quad \mbox{and} \quad
 \P\big(\mathbb{Y}_{n+1}-\mathbb{Y}_n=(0,1)\big)=\frac{b_n}{a_n+b_n}\,\cdot\end{equation*}

(Along this paper, probabilities, expectations  and variances of random variables on diverse underlying spaces will be denoted simply  by~$\P$, $\E$ and~$\V$.)

The stochastic evolution of the process $(\mathbb{Y}_n)_{n \ge 0}$ is determined solely by the starting position $(a_0,b_0)$. The random coordinates $(a_n,b_n)$ of the walk $\mathbb{Y}_n$ satisfy $a_n+b_n=n+a_0+b_0$, for each $n \ge 1$.

The coordinates $(a_n,b_n)$ of $\mathbb{Y}_n$ starting at $(a_0,b_0)$ register the composition of a \textit{standard P\'{o}lya's urn} at time $n$ with an initial composition consisting of $a_0$ amber balls and~$b_0$ blue balls: successively draw a ball uniformly at random from the urn, notice the color, return the drawn ball to the urn, and add \textit{one}  ball of the same color. Thus $a_n$ is the  number of amber balls and $b_n$ is the number of blue balls in the urn, at time $n$, i.e., after $n$ successive random drawings.

Thus, the pair $(a_n,b_n)$ registers both the position of P\'{o}lya's walk after $n$ steps or the composition of P\'{o}lya's urn after $n$ drawings. Along this paper, we will alternate between these two interpretations (although we will favour the first one).

\subsection{Visible points}

A point $(x,y)$ in  $\N^2$ is called \textit{visible from the origin}, or simply visible, if no point of $\N^2$ other than $(x,y)$ itself lies in the straight segment that joins~$(0,0)$ and $(x,y)$.

We use $\gcd(x,y)$ to denote the greatest common divisor of positive integers $x$ and $y$. In divisibility terms, $(x,y)\in \N^2$ is visible from $(0,0)$  if and only if  $\gcd(x,y)=1$, i.e., if the integer  coordinates $x$ and $y$ are coprime integers. It is always this characterization of visibility in terms of coprimality of coordinates that we will use in the estimates and calculations which follow.

\smallskip

Let $\mathcal{V}$ denote the set of points $(x,y)\in \N^2$ which are \textit{visible} from the origin.

A classical theorem of Dirichlet, originally in \cite{Dir}, but see also Hardy--Wright \cite{HW}, Section 18.3, and there, in particular, Theorem 332  (and also Sections~\ref{seccion:Dirichlet's density}, 
\ref{sec:extension 1 de Dirichlet} and~\ref{seccion:general density result} in this paper), gives the  density of the set $\mathcal{V}$:
\begin{equation}\label{eq:dirichlet density of visible}
\lim_{N \to \infty} \frac{1}{N^2} \,\#\big(\mathcal{V}\cap \{1, \ldots, N\}^2\big)=\frac{1}{\zeta(2)}= \frac{6}{\pi^2}\, \cdot\end{equation}

In probabilistic terms, if $N$ is large, the probability that two integers $x$ and $y$ drawn  independently with uniform distribution from $\{1, \ldots, N\}$ is approximately $1/\zeta(2)$.

\smallskip

For a natural number $k\ge 1$, a point $(x,y)\in \N^2$ is called \textit{$k$-visible} if $\gcd(x,y)=k$. This means that in the segment from $(0,0)$ to $(x,y)$ there are exactly $k$ points of $\N^2$, counting $(x,y)$ as one of them, namely, $(jx/k,jy/k)$, for $1\le j \le k$.

We denote the set of those points in $\N^2$ which are \textit{$k$-visible} from the origin with $\mathcal{V}_k$. Observe that $\mathcal{V}_1=\mathcal{V}$ and that for integer $k\ge 1$, we have that $\mathcal{V}_k=k \mathcal{V}$, in the sense that $(x,y)\in \mathcal{V}_k$ if and only there exists $(x',y')\in \mathcal{V}$ such that $x=kx'$ and $y=ky'$.

From $\mathcal{V}_k=k \mathcal{V}$  it follows that the  density of $\mathcal{V}_k$ is given by
\[
\lim_{N \to \infty} \,\frac{1}{N^2} \#\big(\mathcal{V}_k\cap \{1, \ldots, N\}^2\big)=\frac{1}{k^2\,\zeta(2)}\, \cdot\]

%
%
%

Let $I$ be the indicator function of the set $\mathcal{V}$, thus $I(x,y)=1$ if $\gcd(x,y)=1$ and $I(x,y)=0$ otherwise; and for each integer $k \ge 1$,  let $I_k$ be the indicator function of $\mathcal{V}_k$. Observe that $I_1\equiv I$.

\subsection{Visits of P\'{o}lya's walk to visible points}

In this paper we are mostly interested in the  asymptotic proportion of time that  P\'{o}lya's walk spends on the set $\mathcal{V}$ of visible points.

For integer $N\ge 1$, we denote with 
\begin{equation}\label{eq:def QN} Q_N\triangleq \frac{1}{N} \sum_{n=1}^N I(\mathbb{Y}_n)\end{equation}
the random variable that registers the proportion of time (or steps) up to time~$N$ (excluding $\mathbb{Y}_0$) that P\'{o}lya's walk remains visible from the origin.

It turns out that, almost surely, the sequence of random proportions $Q_N$ converges to a limit as $N\to \infty$, \textit{and} that  almost surely this limit is a constant independent of the drawing of the walk; in fact, this limit is almost surely the (asymptotic) density $1/\zeta(2)$ of the set $\mathcal{V}$ of visible points.

We state these facts, the main result of this paper, as:
\begin{theorem}\label{teor:asymptotic coprime proportion of polya walks} For any given initial position $(a_0,b_0)\in \N^2$,
\[
\lim_{N \to \infty} Q_N=\frac{1}{\zeta(2)} \quad \mbox{almost surely}\, .
\]
\end{theorem}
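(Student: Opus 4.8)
The plan is to use the de Finetti representation of Theorem~\ref{teor:deFinetti} to reduce the assertion about the Pólya walk to a family of assertions about standard $\alpha$-random walks, to which the estimates of \cite{CFF} and the second moment method of Proposition~\ref{prop:Borel} apply. Conditionally on the de Finetti parameter $\alpha$, which for initial composition $(a_0,b_0)$ has the $\mathrm{Beta}(a_0,b_0)$ law $\mu$, the Pólya walk $(\mathbb{Y}_n)_{n\ge 0}$ is \emph{exactly} a standard $\alpha$-random walk started at $(a_0,b_0)$: its increments become independent and identically distributed, equal to $(1,0)$ with probability $\alpha$ and to $(0,1)$ with probability $1-\alpha$. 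It therefore suffices to prove that, for $\mu$-almost every $\alpha\in(0,1)$, the conditional law $\P_\alpha$ satisfies $Q_N\to 1/\zeta(2)$ almost surely; integrating this conditional almost-sure statement against $\mu$ then yields the unconditional conclusion.

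I would first fix $\alpha\in(0,1)$ and estimate the conditional mean. Writing $\E_\alpha[Q_N]=\frac{1}{N}\sum_{n=1}^N \P_\alpha(\gcd(a_n,b_n)=1)$ and invoking the single-instant coprimality estimate of \cite{CFF} for the $\alpha$-random walk---now keeping explicit track of how the error depends on $\alpha$---one gets $\P_\alpha(\gcd(a_n,b_n)=1)=1/\zeta(2)+o(1)$ as $n\to\infty$, so that by Cesàro summation $\E_\alpha[Q_N]\to 1/\zeta(2)$.

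The heart of the argument is the conditional variance. I would expand $\V_\alpha[Q_N]$ through the joint probabilities $\E_\alpha[I(\mathbb{Y}_m)I(\mathbb{Y}_n)]$ for $m<n$: conditionally on $\alpha$ the increments are independent, so the displacement from time $m$ to time $n$ is independent of $\mathbb{Y}_m$, and the two coprimality events decorrelate up to a controlled error. The pair estimate of \cite{CFF} then gives $\E_\alpha[I(\mathbb{Y}_m)I(\mathbb{Y}_n)]=1/\zeta(2)^2+(\text{error})$, and summing over $m<n\le N$ produces a bound of the form $\V_\alpha[Q_N]=o(1)$, with enough decay to feed into Proposition~\ref{prop:Borel}. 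That proposition---the second moment method, combined with Borel--Cantelli along a geometric subsequence $N_k$ and interpolation using the monotonicity of $NQ_N=\sum_{n\le N} I(\mathbb{Y}_n)$ together with $0\le Q_N\le 1$---upgrades the $L^2(\P_\alpha)$ convergence to almost-sure convergence under $\P_\alpha$, for each fixed $\alpha\in(0,1)$.

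The main obstacle is the behaviour in $\alpha$: the errors in both the single-point and the pair coprimality estimates degrade as $\alpha\to 0$ or $\alpha\to 1$, where the walk becomes nearly one-dimensional, so one must verify that $\V_\alpha[Q_N]\to 0$ genuinely holds for \emph{every} fixed interior $\alpha$ and that the exceptional set of $\alpha$ on which the almost-sure statement could fail is $\mu$-null. Since the endpoints $\alpha\in\{0,1\}$ carry no $\mathrm{Beta}(a_0,b_0)$ mass and every interior $\alpha$ gives a non-degenerate, genuinely two-dimensional $\alpha$-random walk, the conditional almost-sure convergence holds for $\mu$-almost every $\alpha$. The final integration, justified by Fubini and the tower property, then gives $\P(Q_N\to 1/\zeta(2))=\int_0^1 \P_\alpha(Q_N\to 1/\zeta(2))\,d\mu(\alpha)=1$, which completes the proof.
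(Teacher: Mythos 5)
Your architecture is genuinely different from the paper's, and it is arguably the cleaner one. The paper never conditions: it integrates the $\alpha$-explicit moment bounds of Proposition \ref{prop:bounds for mean and var of RW dependence on alpha} against the $\textsc{beta}(a_0,b_0)$ density via Lemma \ref{lemma:media y var de QN}, and then applies the second moment method (Proposition \ref{prop:Borel}) to $Q_N$ \emph{unconditionally}. That is exactly why the paper must track the $1/\sqrt{\alpha(1-\alpha)}$ and $1/(\alpha(1-\alpha))$ factors, why Corollary \ref{cor:bounds for mean and var of PW} needs $a_0,b_0\ge 2$ (integrability of $1/(\alpha(1-\alpha))$ against the beta density), and why starting points with $a_0=1$ or $b_0=1$ require a separate escape argument at the end of the proof in Section \ref{section:proof of main}. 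Your plan---prove, for each fixed $\alpha\in(0,1)$ and arbitrary starting point, almost-sure convergence under $\P_\alpha$ by the second moment method, and only then integrate the resulting probability-one statements over the mixture---sidesteps all of this: constants may depend on $\alpha$ arbitrarily badly, no uniformity in $\alpha$ is needed, and all starting points are handled at once. The final integration step is legitimate: the mixture identity holds not only for cylinder events but on the whole trajectory $\sigma$-algebra (both sides are probability measures agreeing on cylinders, so a monotone class argument applies), whence $\P(Q_N\to 1/\zeta(2))=\int_0^1\P_\alpha(Q_N\to1/\zeta(2))\,d\nu(\alpha)=1$. (Note that the paper writes $\nu$ for the mixture measure; $\mu$ is reserved for the M\"obius function.)

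However, one step of your sketch is false as written, and it is the step carrying the number theory. You claim $\P_\alpha(\gcd(a_n,b_n)=1)=1/\zeta(2)+o(1)$ as $n\to\infty$, and correspondingly $\E_\alpha[I(\mathbb{Y}_m)I(\mathbb{Y}_n)]=1/\zeta(2)^2+(\text{error})$. Neither holds at individual times. What the CFF-type computation actually gives (see \eqref{eq:mean of Xi}) is
\[
\P_\alpha(\gcd(a_n,b_n)=1)=\frac{\phi(n+a_0+b_0)}{n+a_0+b_0}+\frac{1}{\sqrt{\alpha(1-\alpha)}}\,O\Big(\frac{1}{n^{1/4}}\Big)\,,
\]
and the totient ratio has no limit: it equals $1-1/p$ when $n+a_0+b_0$ is a prime $p$, and it tends to $0$ along values of $n+a_0+b_0$ divisible by all small primes. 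Only the Ces\`aro average converges, via \eqref{eq:Mobius y Euler} and \eqref{eq:sum mu/d2entera}. Likewise, the pair expectations have main terms that are \emph{products of totient ratios} (see \eqref{E(XiXj)}), and $\V_\alpha(Q_N)\to 0$ holds because those products cancel against $\E_\alpha(Q_N)^2$, not because each pair expectation is close to $1/\zeta(2)^2$. The repair is straightforward---invoke the actual fixed-$\alpha$ estimates (Propositions 3.1 and 3.2 of \cite{CFF}, or equivalently the fixed-$\alpha$ case of Lemma \ref{lemma:mean variance Zan} together with \eqref{eq:sum mu/d2entera}, extended to starting point $(a_0,b_0)$) in place of your pointwise claims---so your route does go through, but as written those two displayed claims are incorrect and the variance argument built on them would not stand.
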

Irrespectively of the initial position, it is always the case that, almost surely and asymptotically, the proportion of time that  P\'{o}lya's  walk spends on the set $\mathcal{V}$ of visible points is $1/\zeta(2)$.

In terms of P\'{o}lya's urn, Theorem  \ref{teor:asymptotic coprime proportion of polya walks} claims that \textit{almost surely, the proportion of drawings resulting in  number of amber balls and number of blue balls  which are coprime converges to $1/\zeta(2)$}.

\smallskip

For $k$-visibility, we have analogously:
\begin{theorem}\label{teor:asymptotic gcd=k proportion of polya walks} For any given initial condition $(a_0,b_0)\in \N^2$, one has that, for each $k \ge 1$,
\[
\lim_{N \to \infty} \frac{1}{N} \sum_{n=1}^N I_k (\mathbb{Y}_n )=\frac{1}{k^2\,\zeta(2)} \quad \text{almost surely}.
\]
\end{theorem}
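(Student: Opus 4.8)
The plan is to reduce Theorem~\ref{teor:asymptotic gcd=k proportion of polya walks} to the very engine that drives Theorem~\ref{teor:asymptotic coprime proportion of polya walks}, by means of M\"obius inversion. Writing $\mathbb{Y}_n=(a_n,b_n)$, the point is that $k$\guio{}visibility decomposes into divisibility events: since $\gcd(a_n,b_n)=k$ holds exactly when $k\mid a_n$, $k\mid b_n$ and $\gcd(a_n/k,b_n/k)=1$, M\"obius inversion gives the pointwise identity
\[
I_k(a_n,b_n)=\sum_{d\ge 1}\mu(d)\,\mathbf{1}[kd\mid a_n]\,\mathbf{1}[kd\mid b_n],
\]
where for each fixed $n$ only the finitely many $d$ with $kd\le n+a_0+b_0$ contribute. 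Averaging over $n$,
\[
\frac1N\sum_{n=1}^N I_k(\mathbb{Y}_n)=\sum_{d\ge1}\mu(d)\,D_N(kd),\qquad D_N(m)\triangleq\frac1N\sum_{n=1}^N \mathbf{1}[m\mid a_n]\,\mathbf{1}[m\mid b_n],
\]
so everything reduces to controlling the proportion $D_N(m)$ of steps at which \emph{both} coordinates are divisible by a modulus $m$. The case $k=1$ of this reduction is precisely what underlies Theorem~\ref{teor:asymptotic coprime proportion of polya walks}, whose building blocks are the $D_N(d)$.

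First I would establish the building block: for every fixed $m\ge 1$, $\;D_N(m)\to 1/m^2$ almost surely. This is proved exactly as for the main theorem. Conditioning on the de Finetti parameter $\alpha$ of Theorem~\ref{teor:deFinetti}, the walk becomes a standard $\alpha$\guio{}random walk, for which $a_n=a_0+S_n$ and $b_n=b_0+n-S_n$ with $S_n$ a sum of $n$ i.i.d.\ Bernoulli$(\alpha)$ increments; the double divisibility event is then governed by the law of $S_n \bmod m$. Because the increments take only the values $0$ and $1$, the induced chain on $\mathbb{Z}/m\mathbb{Z}$ is irreducible and aperiodic for $\alpha\in(0,1)$, so $S_n\bmod m$ equidistributes and $\E[\,D_N(m)\mid\alpha\,]\to 1/m^2$, while the correlations between two times decay by the mixing of this finite chain. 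Feeding these first\guio{} and second\guio{}moment estimates into the second moment method of Proposition~\ref{prop:Borel} (along a subsequence, the gaps filled using that the partial sums $N D_N(m)$ are nondecreasing) gives the a.s.\ limit for a.e.\ $\alpha$, and integrating against the Beta$(a_0,b_0)$ mixing measure transfers it to P\'olya's walk.

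To assemble the series I would argue by truncation rather than naive term\guio{}by\guio{}term passage to the limit. Fix $D$ and split $I_k=I_k^{(D)}+R_D$ with $I_k^{(D)}(x,y)=\sum_{d\le D}\mu(d)\,\mathbf{1}[kd\mid x]\,\mathbf{1}[kd\mid y]$. For the finite part, the building block gives, almost surely,
\[
\frac1N\sum_{n=1}^N I_k^{(D)}(\mathbb{Y}_n)=\sum_{d\le D}\mu(d)\,D_N(kd)\;\xrightarrow[N\to\infty]{}\;\sum_{d\le D}\frac{\mu(d)}{(kd)^2}.
\]
For the tail one has $\big|\tfrac1N\sum_n R_D(\mathbb{Y}_n)\big|\le \sum_{d>D}D_N(kd)$, whose expectation is at most $C\sum_{d>D}1/(kd)^2$ uniformly in $N$, thanks to the order\guio{}correct estimate $\E[D_N(m)]\le C/m^2$ (needed only in the range $m\le N+a_0+b_0$, since $D_N(m)=0$ once $m$ exceeds that bound). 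A Markov inequality together with Borel--Cantelli along a subsequence controls this tail uniformly and almost surely, so letting $D\to\infty$ after $N\to\infty$ yields
\[
\lim_{N\to\infty}\frac1N\sum_{n=1}^N I_k(\mathbb{Y}_n)=\sum_{d\ge1}\frac{\mu(d)}{(kd)^2}=\frac1{k^2}\sum_{d\ge1}\frac{\mu(d)}{d^2}=\frac1{k^2\,\zeta(2)}\qquad\text{almost surely.}
\]

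The main obstacle is the one flagged in the Introduction: making the first\guio{} and second\guio{}moment estimates for the $\alpha$\guio{}random walk \emph{quantitative in $\alpha$}. As $\alpha\to 0$ or $\alpha\to 1$ the chain $S_n\bmod m$ mixes slowly and its equidistribution error degrades, so one must track this dependence, as in~\cite{CFF}, and verify that the resulting error terms remain integrable against the Beta$(a_0,b_0)$ density before integrating over the mixture. This uniform control, together with the uniform\guio{}in\guio{}$N$ tail bound needed to sum the M\"obius series, is where the real work lies; the remainder is the arithmetic bookkeeping of the identity for $I_k$ and the evaluation $\sum_{d}\mu(d)/d^2=1/\zeta(2)$.
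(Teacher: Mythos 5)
Your M\"obius decomposition of $I_k$ is exactly the paper's identity \eqref{eq:representation of indicator of gcd}, and your building block is essentially sound: conditionally on $L=\alpha$ the double divisibility event reduces to equidistribution of $\textsc{bin}(n,\alpha)$ modulo $m$ (this is the paper's Lemma \ref{lemma:binomial probabilities residue class}, which you re-derive via mixing of the chain $S_n \bmod m$), the second moment method then gives $D_N(m)\to 1/m^2$ almost surely for the $\alpha$-walk for \emph{every fixed} $\alpha\in(0,1)$, and since a path event that has probability one under the $\alpha$-walk law for a.e.\ $\alpha$ has probability one under the mixture, the statement transfers to P\'olya's walk. In fact this transfer is cleaner than the paper's: the paper integrates the \emph{moment estimates} over $\alpha$ (Corollaries \ref{cor:bounds for mean and var of PW} and \ref{cor:bounds for mean and var of PW with k}), which forces $a_0,b_0\ge 2$ for integrability of $1/(\alpha(1-\alpha))$ against $f_{a_0,b_0}$ and requires a separate patch for starting points with $a_0=1$ or $b_0=1$; your conditional route needs no uniformity in $\alpha$ whatsoever, so your last paragraph's worry about integrability of error terms is actually moot under your own scheme, and all starting points are covered at once.

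The genuine gap is in the assembly of the M\"obius series. Write $U_{N,D}=\sum_{d>D}D_N(kd)$ for the tail. Markov's inequality gives $\P(U_{N,D}>\lambda)\le C/(\lambda k^2 D)$ (modulo the caveat below), a bound which is uniform in $N$ but does \emph{not decay} in $N$; hence for fixed $D$ and any subsequence $N_j$ one has $\sum_j \P(U_{N_j,D}>\lambda)=\infty$, Borel--Cantelli yields nothing, and first-moment bounds alone cannot control $\limsup_N U_{N,D}$ almost surely. (Nor does Proposition \ref{prop:Borel} apply to $U_{N,D}$ as stated: its summands $\#\{d>D: kd\mid \gcd(a_n,b_n)\}$ are not uniformly bounded.) The alternative reading --- letting $D=D_j$ grow along the subsequence so that the probabilities become summable --- breaks the other half of the argument, because the finite part $\sum_{d\le D_j}\mu(d)D_{N_j}(kd)$ then has a growing number of terms and the fixed-$m$ a.s.\ limit of your building block is not enough: you would need rates uniform over $d\le D_j$. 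Closing the gap requires second-moment (correlation) estimates for the divisibility counts, i.e.\ precisely the quantitative arithmetic bookkeeping the paper performs. The paper avoids the interchange altogether by keeping the M\"obius sum \emph{inside} the one-step expectation, where it is a finite sum (only $kd\mid n+a_0+b_0$ contributes) and all divisors are handled simultaneously via $\tau(n)=O(n^{1/4})$; this is Lemma \ref{lema:imp for k} and Proposition \ref{prop:bounds for mean and var of RW dependence on alpha with k}, after which the second moment method is applied directly to the $k$-visible proportion itself. A smaller slip in the same step: the claim $\E[D_N(m)]\le C/m^2$ uniformly in $N$ is false in the range $\sqrt{N}\lesssim m\le N+a_0+b_0$, where the equidistribution error contributes a term of order $1/(m\sqrt N)$; this is harmless after summing over $d>D$, but it should be stated and used in that summed form.
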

Thus, almost every evolution  of  P\'{o}lya's walk spends \textit{for each $k \ge 1$} an asymptotic proportion  $1/(k^2\zeta(2))$ of time on the set $\mathcal{V}_k$.

\begin{remark}
We would like to point out that in \cite{Hales}, Hales is mainly interested  in  two other stochastic processes:  Farey's walk and P\'{o}lya's variant walk. Contrary to the P\'{o}lya walk which we are considering, these processes visit exclusively visible points, and their main appeal lies in their interesting ergodic properties.
\end{remark}

\subsubsection{Several balls added at each drawing; larger steps in the walk}
More generally, consider the P\'{o}lya walk with initial position $(a_0, b_0)\in\mathbb{N}^2$ and with up-step  and right-step of integer size $c\ge 1$. In terms of the urn model, we start with $a_0$ amber balls and~$b_0$ blue balls, and we add $c$ amber balls (instead of just 1) if the drawn ball is amber, and~$c$ blue balls otherwise. We denote by $\mathbb{Y}^c_n$ the position at step $n\ge 0$ of the walk with parameters $(a_0,b_0)$ and $c$. We have the following.



\begin{theorem}\label{teor:polya from (1,1) step c}
For integers $a_0, b_0\ge 1$ and $c \ge 1$ and the P\'{o}lya walk $(\mathbb{Y}^c_n)_{n \ge 0}$ with up-step $(0,c)$ and right-step $(c,0)$, and starting from $(a_0,b_0)$,
there exists a constant $\Delta(a_0,b_0;c,c)$ $($explicit, and computable$)$  such that
\[\lim_{N \to \infty} \frac{1}{N} \sum_{n=1}^N I(\mathbb{Y}^c_n)=\Delta(a_0,b_0;c,c)\quad \mbox{almost surely}\, .\]
\end{theorem}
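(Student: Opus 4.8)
The plan is to reduce, via de Finetti's theorem, the study of the P\'olya walk $(\mathbb{Y}^c_n)$ to that of a mixture of scaled standard random walks, and then to run the second moment method of Proposition~\ref{prop:Borel} conditionally on the de Finetti parameter. The generalized urn with step $c$ is still exchangeable in its sequence of colors --- one checks directly that the probability of a given color sequence depends only on the numbers of amber and blue draws, not on their order --- so Theorem~\ref{teor:deFinetti} applies: there is a random parameter $\alpha$, taking values in $(0,1)$ almost surely since $a_0,b_0\ge 1$, such that conditionally on $\{\alpha=\cdot\}$ the successive draws are i.i.d.\ amber-or-blue with amber-probability $\alpha$. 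Writing $X_n$ for the number of amber draws up to time $n$, we then have, conditionally on $\alpha$,
\[
\mathbb{Y}^c_n=\big(a_0+c\,X_n,\; b_0+c\,(n-X_n)\big),\qquad X_n\sim \mathrm{Bin}(n,\alpha).
\]
Thus the conditioned walk is a scaled $\alpha$-random walk confined to the residue class $(a_0,b_0)\bmod c$, and it suffices to prove, for each fixed $\alpha\in(0,1)$, that $\frac1N\sum_{n\le N} I(\mathbb{Y}^c_n)$ converges almost surely to a limit $g(\alpha)$, and then that $g(\alpha)$ does not in fact depend on $\alpha$; its constant value will be $\Delta(a_0,b_0;c,c)$, and the unconditional almost sure statement follows because $\alpha$ lies in $(0,1)$ almost surely.

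For a fixed $\alpha$ I would apply the second moment method exactly as in~\cite{CFF}: the almost sure convergence of $\frac1N\sum_{n\le N} I(\mathbb{Y}^c_n)$ to $g(\alpha):=\lim_N \frac1N\sum_{n\le N}\E_\alpha[I(\mathbb{Y}^c_n)]$ reduces, through Proposition~\ref{prop:Borel}, to two estimates: the Ces\`aro convergence of the conditional means $\E_\alpha[I(\mathbb{Y}^c_n)]$, and the decay of the averaged covariances, $\frac1{N^2}\sum_{m,n\le N}\mathrm{Cov}_\alpha\big(I(\mathbb{Y}^c_m),I(\mathbb{Y}^c_n)\big)\to 0$. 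Both are obtained by sieving. Using $\mathbf{1}[\gcd(A,B)=1]=\sum_{d\mid\gcd(A,B)}\mu(d)$, with $A_n=a_0+cX_n$ and $B_n=b_0+c(n-X_n)$, the first moment becomes
\[
\E_\alpha[I(\mathbb{Y}^c_n)]=\sum_{d\ge 1}\mu(d)\,\P_\alpha\big(d\mid A_n \text{ and } d\mid B_n\big).
\]

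The event $\{d\mid A_n,\ d\mid B_n\}$ forces both $d\mid(cn+a_0+b_0)$ and $cX_n\equiv-a_0\pmod d$; the latter is solvable only when $\gcd(c,d)\mid a_0$, and then it pins $X_n$ to a single residue class modulo $m(d):=d/\gcd(c,d)$. Expanding in roots of unity, $\P_\alpha(X_n\equiv r\bmod m)=\frac1m\sum_{j=0}^{m-1}\omega^{-jr}\,(\alpha\omega^{j}+1-\alpha)^n$ with $\omega=e^{2\pi i/m}$, the $j=0$ term gives the $\alpha$-free main term $1/m$, while every other term decays geometrically in $n$, since $|\alpha\omega^j+1-\alpha|<1$ for $\alpha\in(0,1)$ and $j\ne0$. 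Hence each $\P_\alpha(d\mid A_n,\ d\mid B_n)$ equals the $\alpha$-independent, $n$-periodic main term $\tfrac{\gcd(c,d)}{d}\,\mathbf{1}[\gcd(c,d)\mid a_0]\,\mathbf{1}[d\mid cn+a_0+b_0]$ plus a geometrically small remainder. Averaging over $n\le N$ and summing over $d$, the remainders wash out and the main terms assemble into an explicit Dirichlet-type density, so $g(\alpha)$ equals the $\alpha$-free constant $\Delta(a_0,b_0;c,c)$; the confinement to the residue class modulo $c$ is exactly what makes $\Delta$ differ from $1/\zeta(2)$ in general.

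The main obstacle is the analytic bookkeeping required to make these two sieves legitimate and uniform. For the first moment one must truncate the M\"obius sum at $d\asymp cn$ (since $\gcd(A_n,B_n)\le a_0+b_0+cn$) and bound both the tail and the accumulated geometric remainders well enough to interchange $\sum_d$ with the Ces\`aro limit in $N$; this is precisely where the dependence of the roots-of-unity remainders on both $d$ and $\alpha$ must be tracked, sharpening the estimates of~\cite{CFF} in which this dependence was immaterial. The genuinely harder estimate is the covariance bound: one expands $I(\mathbb{Y}^c_m)I(\mathbb{Y}^c_n)$ over pairs $(d,e)$ and must control the joint law of $\big(X_m\bmod m(d),\,X_n\bmod m(e)\big)$, showing that $I(\mathbb{Y}^c_m)$ and $I(\mathbb{Y}^c_n)$ decorrelate fast enough in $|m-n|$ for the double Ces\`aro average of covariances to vanish. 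Establishing this decorrelation with error terms summable after the double average, uniformly over the relevant ranges of $d$ and $e$, is the crux of the argument and the part that most directly reuses and refines the random-walk estimates of~\cite{CFF}.
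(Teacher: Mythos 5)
Your proposal is sound, and it takes a genuinely different route from the paper at the decisive structural step: \emph{where} the second moment method is applied. You condition on the de Finetti parameter first, prove almost sure convergence of $\frac1N\sum_{n\le N}I(\mathbb{Y}^c_n)$ under each conditional law (i.e., for each fixed $\alpha$-random walk of step $c$) separately, observe that the limit $g(\alpha)\equiv\Delta(a_0,b_0;c,c)$ does not depend on $\alpha$, and then integrate the resulting probability\guio{one} statements against the mixing measure, which puts no mass at $0$ or $1$. The paper does the opposite: it integrates the \emph{moment estimates} over $\alpha$ first, obtaining $\E(Q_N^c)=\Delta(c)+O(N^{-1/4})$ and $\V(Q_N^c)=O(N^{-1/4})$ unconditionally, and only then applies Proposition \ref{prop:Borel} once. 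That order of operations is exactly what forces the paper to track the factors $1/\sqrt{\alpha(1-\alpha)}$ and $1/(\alpha(1-\alpha))$ in the error terms (so that they can be integrated against the $\textsc{beta}$ mixing density), to restrict first to interior starting points $(1+kc,1+qc)$ with $k,q\ge1$ where those integrals converge, and to handle boundary starting points such as $(1,1)$ by the separate hitting argument of Remark \ref{remark: walk enters comfort zone}. Your conditional route buys a real simplification: since $\alpha$ is frozen when the second moment method runs, all constants may depend on $\alpha$, no uniformity or integrability in $\alpha$ is required, and every starting point, including $(1,1)$, is covered at one stroke. Indeed, your stated worry about having to track the $\alpha$\guio{dependence} of the remainders is unnecessary \emph{within your own framework} --- that dependence is immaterial for you, just as it was in \cite{CFF}; it is only the paper's unconditional route that needs it. What the paper's route buys in exchange is quantitative information on the unconditional walk (rates for $\E(Q_N^c)$ and $\V(Q_N^c)$), which has independent interest.

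Two smaller remarks on the details you defer. First, your sieve does produce the correct constant: the per-$d$ main term $\frac{\gcd(c,d)}{d}\,\mathbf{1}[\gcd(c,d)\mid a_0]\,\mathbf{1}[d\mid cn+a_0+b_0]$, once averaged in $n$ (the set of admissible $n$ has density $\frac{\gcd(c,d)}{d}$, and is nonempty exactly when $\gcd(c,d)\mid a_0+b_0$, hence jointly when $\gcd(c,d)\mid\gcd(a_0,b_0)$), assembles into $\sum_{\gcd(d,c)\mid\gcd(a_0,b_0)}\frac{\mu(d)}{d^2}\gcd(d,c)^2$, which is precisely the paper's expression \eqref{eq:formula for big Delta con mu y c}. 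Second, the phrase ``geometrically small'' is too optimistic: for moduli $m(d)=d/\gcd(c,d)$ of size $\gtrsim\sqrt n$, the nontrivial roots\guio{of}\guio{unity} terms are not geometrically small in $n$; the correct uniform bound per residue class is $O\big(1/\sqrt{n\,\alpha(1-\alpha)}\big)$, so your expansion is really a proof of the paper's Lemma \ref{lemma:binomial probabilities residue class with c}, and the truncation of the M\"obius sum at $d\le cn+a_0+b_0$ together with the divisor bound \eqref{eq:order of tau} is what closes both the first\guio{moment} and the covariance estimates (the latter via independence of the increment $X_m-X_n$ from $X_n$). These are exactly the points you flag as remaining bookkeeping, and they do go through.
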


Theorem \ref{teor:polya from (1,1) step c} appears in Section~\ref{section:Changing the steps of the walk} as Theorem~\ref{teor:polya from (1,1) step c, bis} for the particular case where $(a_0, b_0)=(1,1)$,
and as Theorem~\ref{teor:polya from (a_0,b_0) steps generales} for the general case.

This constant $\Delta(a_0,b_0;c,c)$ is given in \eqref{eq:formula for big Delta con mu y c}. The reason of the double appearance of~$c$ is because it is a particular instance of a more general expression $\Delta(a_0,b_0;r_0,u_0)$, to be considered later on Section \ref{seccion:general density result}; there, a couple of explicit formulas for $\Delta(a_0,b_0;r_0,u_0)$ can be found. For instance, for the  case $(a_0,b_0)=(1,1)$ and $u_0=r_0=c$, the expression of $\Delta(1,1;c,c)$ is as explicit as
\[
\Delta(1,1;c,c)=\sum_{\substack{d \ge 1,\\ \gcd(d,c)=1}} \frac{\mu(d)}{d^2}=\prod_{p \, \nmid \, c} \Big(1-\frac{1}{p^2}\Big)\, .
\]
Here, as  usual, $\mu$ stands for the M\"{o}bius function.
The product above $\prod_{p \, \nmid \, c} $ extends to all primes $p$ which do not divide $c$. For $c=1$, the product extends  to all primes and $\Delta(1,1;1;1)=\prod_{p}(1-1/p^2)={1}/{\zeta(2)}$, as claimed in Theorem~\ref{teor:asymptotic coprime proportion of polya walks}. But, for instance, $\Delta(1,1;2;2)=\frac{4}{3}\, \frac{1}{\zeta(2)}$.
Observe that $\Delta(1,1;c,c)$ depends only on the prime factors of $c$. 



\subsection{Standard random walks in $\N^2$}

Let $\alpha \in (0,1)$. Consider the $\alpha$-random walk  $(\mathbb{Z}_{\alpha,n})_{n\ge 0}$  with values in $\N^2$ starting at some initial position $\mathbb{Z}_{\alpha,0}=(a_0,b_0)$ with \textit{independent and identically distributed} increments given by
\[\P\big(\mathbb{Z}_{\alpha, {n+1}}-\mathbb{Z}_{\alpha, n}=(1,0)\big)=\alpha\quad \text{and}\quad
 \P\big(\mathbb{Z}_{\alpha,{n+1}}-\mathbb{Z}_{\alpha, n}=(0,1)\big)=(1-\alpha)\,.
 \]

 The $\alpha$-random walk moves up by $(0,1)$ with probability $(1-\alpha)$ and to the right by $(1,0)$ with probability $\alpha$.

 \smallskip

 For $\alpha\in (0,1)$ and integer $N\ge 1$, we denote
\begin{equation}\label{eq:def SNa} S_{\alpha,N}\triangleq\frac{1}{N} \sum_{n=1}^N I(\mathbb{Z}_{\alpha,n})\, .\end{equation}
This $S_{\alpha,N}$ is a random variable which registers the proportion of time (or steps) up to time $N$ that the $\alpha$-random walk remains visible from the origin.

\smallskip

For the $\alpha$-random walk $\mathbb{Z}_{\alpha,n}$, it has been shown in \cite{CFF} that:

 \begin{theoremletra}\label{teor:basic asymptotic of CFF} For any $\alpha \in (0,1)$,  the $\alpha$-random walk $\mathbb{Z}_{\alpha,n}$ starting at $(0,0)$ satisfies
 \[
\lim_{N \to \infty} S_{\alpha,N}=\frac{1}{\zeta(2)}\, , \quad \mbox{almost surely}\,.\]
 \end{theoremletra}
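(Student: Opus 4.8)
The plan is to apply the second moment method of Proposition~\ref{prop:Borel} to the averages $S_{\alpha,N}$ of~\eqref{eq:def SNa}, for which I must control their first two moments. Write $\mathbb{Z}_{\alpha,n}=(a_n,b_n)$, so that $a_n$ is a Binomial$(n,\alpha)$ random variable (a sum of $n$ independent Bernoulli$(\alpha)$ increments) and $b_n=n-a_n$. The first step is to notice that $\gcd(a_n,b_n)=\gcd(a_n,n)$, whence $I(\mathbb{Z}_{\alpha,n})=1$ precisely when $\gcd(a_n,n)=1$. Expanding the coprimality indicator by M\"obius inversion, and using that $d\mid a_n$ together with $d\mid b_n$ forces $d\mid n$, I would write
\[
\E\big[I(\mathbb{Z}_{\alpha,n})\big]=\P\big(\gcd(a_n,n)=1\big)=\sum_{d\mid n}\mu(d)\,\P(d\mid a_n).
\]

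For the first moment I would evaluate $\P(d\mid a_n)$ by the roots-of-unity filter together with the binomial moment generating function, obtaining $\P(d\mid a_n)=\frac1d\sum_{j=0}^{d-1}\big(1-\alpha+\alpha\,e^{2\pi i j/d}\big)^{n}$. The principal frequency $j=0$ contributes $1/d$, so the main term of $\E[I(\mathbb{Z}_{\alpha,n})]$ is $\sum_{d\mid n}\mu(d)/d=\prod_{p\mid n}(1-1/p)$, which is independent of $\alpha$. The non-principal frequencies carry the factor $\big|1-\alpha+\alpha e^{2\pi i j/d}\big|^{n}=\big(1-4\alpha(1-\alpha)\sin^2(\pi j/d)\big)^{n/2}$, strictly smaller than $1$ for $\alpha\in(0,1)$ and $1\le j\le d-1$; summing the resulting geometric series over the multiples $n$ of $d$ and averaging in $n$ should show that their Ces\`aro contribution vanishes. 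Combined with the classical estimate $\frac1N\sum_{n\le N}\prod_{p\mid n}(1-1/p)=1/\zeta(2)+O(\log N/N)$, this gives $\E[S_{\alpha,N}]\to 1/\zeta(2)$.

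For the second moment I would expand $\V[S_{\alpha,N}]=\frac1{N^2}\sum_{m,n\le N}\mathrm{Cov}\big(I(\mathbb{Z}_{\alpha,m}),I(\mathbb{Z}_{\alpha,n})\big)$ and exploit the independence of the increments: for $m\le n$ one has $a_n=a_m+(a_n-a_m)$ with $a_n-a_m$ a Binomial$(n-m,\alpha)$ variable independent of $a_m$. A two-dimensional roots-of-unity filter then expresses the joint probability $\P(d\mid a_m,\,e\mid a_n)$ as
\[
\frac{1}{de}\sum_{j=0}^{d-1}\sum_{k=0}^{e-1}\big(1-\alpha+\alpha\,\omega_d^{\,j}\omega_e^{\,k}\big)^{m}\big(1-\alpha+\alpha\,\omega_e^{\,k}\big)^{\,n-m},\qquad \omega_d=e^{2\pi i/d},\ \omega_e=e^{2\pi i/e}.
\]
A pleasant feature is that, after subtracting the corresponding expansion of $\P(d\mid a_m)\,\P(e\mid a_n)$ and summing against $\mu(d)\mu(e)$, every contribution with $j=0$ or with $k=0$ cancels, so that only the frequencies with both $j\ne0$ and $k\ne0$ survive in the covariance; each such term carries a modulus strictly below $1$ raised to a power growing with $m$ or with $n-m$. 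Bounding these geometric factors and summing over $d\mid m$, $e\mid n$ and $m,n\le N$ should yield $\V[S_{\alpha,N}]=O(1/N)$, up to logarithmic factors.

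A variance of size $1/N$ is not summable, so I would run Proposition~\ref{prop:Borel} along the subsequence $N_k=k^2$: Chebyshev together with $\sum_k\V[S_{\alpha,N_k}]<\infty$ gives, by Borel--Cantelli, $S_{\alpha,N_k}\to 1/\zeta(2)$ almost surely, and a routine sandwiching using $0\le I\le 1$ and $N_{k+1}/N_k\to1$ promotes this to convergence of the full sequence. The technical heart, and the step I expect to be the main obstacle, is the uniform control of the non-principal terms in both moments: the factor $1-4\alpha(1-\alpha)\sin^2(\pi j/d)$ tends to $1$ as $j/d\to0$, so the geometric decay degrades badly for divisors $d$ comparable to the number of steps, and one must sum these contributions carefully---both against $\mu(d)$ and over the progressions $d\mid n$---to verify that their averages are genuinely negligible. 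It is exactly the $\alpha$-dependence of these error estimates, harmless here for a single fixed $\alpha$ but left untracked in~\cite{CFF}, that the de Finetti mixture of the present paper will force us to make explicit.
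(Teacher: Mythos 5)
Your plan has the same architecture as the paper's proof of this theorem: reduce visibility to $\gcd(a_n,n)=1$, expand the indicator by M\"obius, estimate binomial probabilities along residue classes by exponential sums, use independence of the increments for the mixed moments, and finish with the second moment method (the paper derives Theorem~\ref{teor:basic asymptotic of CFF} from Proposition~\ref{prop:bounds for mean and var of RW dependence on alpha} together with Proposition~\ref{prop:Borel}, and those are built from exactly these ingredients). Your observation that in the covariance every term with $j=0$ or $k=0$ cancels is correct, and is a clean repackaging of what the paper achieves by applying Lemma~\ref{lema:imp} separately to the two independent stretches of the walk. Two side remarks: your hoped-for variance bound $O(1/N)$ is more than this method delivers, since resonant pairs of frequencies (e.g.\ $d=e$, $k=d-j$, for which $\omega_d^{\,j}\omega_e^{\,k}=1$) leave only decay in $n-m$ and cap the method at roughly $O(N^{-1/2})$; the paper settles for $O(N^{-1/4})$, and any polynomial rate suffices. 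Also, your subsequence-plus-sandwich ending is precisely the proof of Proposition~\ref{prop:Borel}, which you could invoke as stated.

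The one substantive weakness is the mechanism you propose at what you rightly call the technical heart. Controlling the non-principal frequencies by ``summing the resulting geometric series over the multiples $n$ of $d$'', with absolute values, is not enough by itself: for a single modulus $d$, the frequency $j=1$ alone contributes along its progression
\[
\frac1d\sum_{m\ge 1}\rho^{md}=\frac1d\,\frac{\rho^{d}}{1-\rho^{d}}\asymp\frac{1}{\alpha(1-\alpha)}\,,
\qquad \rho=\big|1-\alpha+\alpha e^{2\pi i/d}\big|\,,
\]
uniformly in $d$ and in $N$, because $1-\rho^{d}\asymp \alpha(1-\alpha)/d$ for large $d$; summing this over the up to $N$ moduli $d\le N$ gives a bound of order $N/(\alpha(1-\alpha))$, the same size as the main term, so nothing is proved. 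To make your scheme work you must either truncate each geometric series at its true length $\lfloor N/d\rfloor$ and balance the two bounds (this can be pushed through and yields a total error $O\big(\sqrt{N}\log N/\sqrt{\alpha(1-\alpha)}\big)$ for $\sum_{n\le N}\E\big(I(\mathbb{Z}_{\alpha,n})\big)$), or, as the paper does in Lemma~\ref{lemma:binomial probabilities residue class} (inherited from Lemma 2.1 of \cite{CFF}), sum over the frequencies \emph{first} for each fixed $n$: the Gaussian-type bound $\sum_{j\neq 0}\big|1-\alpha+\alpha e^{2\pi i j/d}\big|^{n}=O\big(d/\sqrt{n\,\alpha(1-\alpha)}\big)$ shows that every residue class has probability $1/d+O\big(1/\sqrt{n\,\alpha(1-\alpha)}\big)$ uniformly in $d$ and $r$, after which the sum over $d\mid n$ costs only a factor $\tau(n)=O(n^{1/4})$ by \eqref{eq:order of tau}, and all subsequent averaging is routine. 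The same repair is needed in your covariance estimate, where the decay available in $m$ degrades for large moduli and one must again use per-time bounds (this is what the double application of Lemma~\ref{lema:imp} accomplishes). With that fix, your proof is complete and coincides in substance with the paper's.
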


 Actually, Theorem \ref{teor:basic asymptotic of CFF} holds for any starting position $(a_0,b_0)\in \mathbb{N}^2$ of the walk, and, moreover, almost surely and for each $k\ge 1$,
 \[
 \lim_{N \to \infty} \frac{1}{N} \sum_{n=1}^N I_k(\mathbb{Z}_{\alpha, n})=\frac{1}{k^2\zeta(2)}\,\cdot
 \]
Theorem \ref{teor:basic asymptotic of CFF} is  stated in \cite{CFF} for starting point $(0,0)$ and mere visibility, and not for arbitrary starting point in $\N^2$ of the walk and $k$-visibility. We will discuss that extension in Section \ref{section:gcd=k}, see Theorem \ref{teor:basic asymptotic of CFF with k}.

\subsection{Notations and some preliminaries} For a set $A$, we use $\#A$ to denote the cardinality of $A$.

For a subset $B$ of $\N^2$, we call the limit
\[
\lim_{N\to \infty}\frac{1}{N^2} \#\big(B\cap \{1, \ldots, N\}^2\big)\triangleq D(B)
\]
(if it exists) the density $D(B)$ of $B$.

We use $\gcd(x,y)$ and $\text{\rm lcm}(x,y)$ to denote, respectively,  the greatest (positive) common divisor and the least 
 common multiple of the positive integers $x$ and $y$.

As usual, $\mu$ and $\phi$ denote, respectively,  the M\"obius and the Euler totient functions, and $\zeta$ stands for the Riemann zeta function. With $\tau(n)$ we denote the number of divisors of $n$.


The symbol $\prod_{p}$ means an infinite product which extends over all primes $p$, while $\prod_{p\in \mathcal{P}}$ means a product (maybe infinite) which extends over all primes satisfying property $\mathcal{P}$.

Probabilities, expectations  and variances of random variables on diverse underlying spaces will be denoted simply  by~$\P$, $\E$ and $\V$. For  random variables $X$ and $Y$, the notation   $X\stackrel{\rm d}{=}Y$ signifies that  $X$ and $Y$ have the same distribution: $\P(X\in B)=\P(Y\in B)$, for any Borel set in $\R$.

With $X\sim \textsc{bin}(N,p)$, for integer $N\ge 1$ and $p\in (0,1)$, we signify that the distribution of the random variable $X$  is binomial with $N$ repetitions and probability of success $p$.
Besides, $X\sim\textsc{beta}(a,b)$ means that the distribution of the random variable $X$ is a beta distribution with parameters $a,b\in (0,\infty)$. See Section \ref{subsection:beta}.

The de Finetti mixture (probability Borel) measure associated to an exchangeable sequence of Bernoulli variables is denoted by $\nu$.

With $C, C^\prime, \ldots$, we denote some absolute constants.

\subsubsection{Some number-theoretical results}\label{subseccion:number-theoretical}
We shall use several times the fact that $\gcd(a,b)=\gcd(a,a+b)$. For the M\"{o}bius function $\mu$, we have that
\begin{equation}
\label{eq:sum mu/d2}
\sum_{d=1}^{\infty}\frac{\mu(d)}{d^2}=\frac{1}{\zeta(2)}=\frac{6}{\pi^2}\,,
\end{equation}
and further that
\begin{equation}
\label{eq:sum mu/d2entera}
\sum_{d\le n}\frac{\mu(d)}d\, \Big\lfloor \frac{n}{d}\Big\rfloor =\frac{6n}{\pi^2}+O(\log n)\,,\quad\text{as $n\to\infty$.}
\end{equation}
The identity
\begin{equation}
\label{eq:Mobius y Euler}
\sum_{d\mid k}\frac{\mu(d)}{d}=\frac{\phi(k)}{k}\,,
\end{equation}
where $\phi$ denotes Euler's totient function, follows by M\"obius inversion from the Gauss identity $n=\sum_{d \mid n}\phi(d)$, for $n \ge 1$.

For $\tau(n)$, the number of divisors of $n$, we have
\begin{equation}
\label{eq:order of tau}
\tau(n)=O(n^{\delta})\quad\text{for all $\delta >0$, as $n\to\infty$.}
\end{equation}
%
%
See Theorem 315
 in \cite{HW}. In this paper, taking $\delta =1/4$ will be good enough, as we just shall need an exponent less than $1/2$.

For $k\ge 1$, we let $g_k$ denote the arithmetic function given by $g_k(n)=1$ if $n$ is a multiple of $k$, and $g_k(n)=0$, otherwise. We let $\delta_k$ be the Kronecker delta: $\delta_k(n)=1$ if $n=k$, and $\delta_k(n)=0$ otherwise.

Thus $g_k(n)=\sum_{d\mid n} \delta_k(d)$, for each $n \ge 1$, while, from M\"{o}bius inversion, we see  that $\delta_k(n)=\sum_{kd\mid n} \mu(d)$, for each $n \ge 1$; and we deduce also that,
 for $k \ge 1$,
\begin{equation}
\label{eq:representation of indicator of gcd}
\delta_k(\gcd(n,m))=\sum_{kd \mid n,\, kd \mid m}\mu(d)\,, \quad \mbox{for each $n,m \ge 1$}\,.
\end{equation}

\subsubsection{Dirichlet's density theorem}\label{seccion:Dirichlet's density}

Observe that
\[\begin{aligned}\sum_{1\le n,m\le N}\delta_1(\gcd(n,m))&=\sum_{1\le n,m\le N}\sum_{d\mid n,\, d\mid m} \mu(d)\\&=
\sum_{d\ge 1} \mu(d)\,\#\big\{1\le n,m\le N: d|n, d|m\big\}=\sum_{d\ge 1} \mu(d)\Big\lfloor \frac{N}{d}\Big\rfloor^2\,.\end{aligned}\]
Writing
$\lfloor {N}/{d}\rfloor= {N}/{d}-\{ {N}/{d}\}$,  and using that $|\mu(d)|\le 1$ for any $d \ge 1$, we obtain that
\[\sum_{1\le n,m\le N}\delta_1(\gcd(n,m))=N^2 \sum_{d \ge 1}\frac{\mu(d)}{d^2}+O(N\ln N)\,,\quad\text{as $N\to\infty$}.
\]
In particular, using \eqref{eq:sum mu/d2}, we see that
\[
\frac{\#\big\{1\le n,m\le N: \gcd(n,m)=1\big\}}{N^2}=\frac{1}{N^2} \sum_{1\le n,m\le N}\delta_1(\gcd(n,m))\xrightarrow{N\to\infty}\sum_{d \ge 1}\frac{\mu(d)}{d^2}=\frac{6}{\pi^2}\,,\]
which is Dirichlet's density result (with error estimate): the density of coprime pairs is $1/\zeta(2)$. This is, of course,  standard, but we have recalled it since we shall use later on, see Sections~\ref{sec:extension 1 de Dirichlet} and~\ref{seccion:general density result}, a bit more elaborate version of that type of argument.

\subsubsection{Beta distributions}\label{subsection:beta} We shall need and use the $\textrm{beta}$ distributions of probability in the interval $(0,1)$. Here we recall definitions, establish notations and register some simple facts.

We shall denote the density of the $\textsc{beta}(a,b)$ distribution with parameters $a,b>0$ by $f_{a,b}(\alpha)$; thus
\begin{equation}\label{eq:formula density fab}f_{a,b}(\alpha)=\frac{\alpha^{a-1} (1-\alpha)^{b-1}}{\mathrm{Beta}(a,b)}\, , \quad \mbox{for $\alpha \in (0,1)$}\, ,\end{equation}
where $\mathrm{Beta}$ denotes the beta function:
\begin{equation}\label{eq:def beta function}
\mathrm{Beta}(a,b)=\int_0^1 \alpha^{a-1} (1-\alpha)^{b-1}\, d\alpha=\frac{\Gamma(a)\,\Gamma(b)}{\Gamma(a+b)}\, ,
\end{equation}
and $\Gamma$ is the gamma function.

The expected value of a $\textsc{beta}(a,b)$ distribution is $a/(a+b)$.
For $a=1$, $ b=1$, the $\textsc{beta}(1,1)$ distribution is just the uniform distribution in $(0,1)$.

It will relevant later on to observe that
\begin{equation}\label{eq:integral finita Beta con raices}
\int_0^1 \frac{1}{\sqrt{\alpha (1-\alpha)}} \,f_{a,b}(\alpha) \,d\alpha<\infty
\end{equation}
if (and only if) $a>1/2$ and $b>1/2$. And that
\begin{equation}\label{eq:integral finita Beta sin raices}
\int_0^1 \frac{1}{{\alpha (1-\alpha)}} \,f_{a,b}(\alpha) \,d\alpha<\infty
\end{equation}
if (and only if) $a>1$ and $b>1$.

\smallskip

In what follows, mostly integer values of the parameters $a$ and $b$ would intervene, and  in that case we have that
\[
\Gamma(a)=(a-1)!\quad \text{and}\quad \mathrm{Beta}(a,b)=\frac{a+b}{ab}\Big/\binom{a+b}{a}\, .
\]

\subsection{Plan of the paper} Section \ref{section:exchange and Polya} recalls the  de Finetti exchangeability theorem and applies it to the P\'{o}lya walk.

Section \ref{section:proof of main} gives a proof of the main theorem of this paper, Theorem \ref{teor:asymptotic coprime proportion of polya walks}, based on the mixture representation and modulo some precise estimates for $\alpha$-random walks which are dealt with in Section \ref{section:estimates for random walk}.

Section \ref{section:gcd=k} is devoted to the study of $k$-visibility, while Section \ref{section:Changing the steps of the walk} exhibits the main results for walks with steps of general size and arbitrary starting point.

\section{Exchangeability and P\'{o}lya's walk}\label{section:exchange and Polya}

We first describe in Section \ref{section:exchange and de Finetti} some known facts about the de Finetti exchangeability theorem and establish some notation and terminology.
Next, in Section \ref{section:exchange polya urn walk}, we apply the exchangeability theorem to P\'{o}lya's urn and walk, recalling along the way some basic properties of P\'{o}lya's urn. Finally, in Section \ref{section:polya walk as mixture}, we exhibit P\'{o}lya's walk as a mixture of $\alpha$-standard random walks.

For P\'{o}lya's urn, the original sources are \cite{EggPolya} and \cite{Polya};  but see \cite{JK}. As for de Finettis's theorem, it first appeared in \cite{deF} and \cite{deF2}; but see \cite{Aldous}, \cite{HeathSudderth}, \cite{Kingman} and \cite{Kirsch}.

\

\subsection{Exchangeability and de Finetti's theorem}\label{section:exchange and de Finetti}

Let $(G_n)_{n\ge 1}$ be a sequence of Bernoulli random variables defined all of them in the same probability space.

The sequence $(G_n)_{n\ge 1}$ is said to be \textit{exchangeable} if for any integer $n \ge 1$ and any list $(y_1, \ldots, y_n)$ extracted from $\{0,1\}$ and any permutation $\sigma$ of $(1, \ldots, n)$ we have that
\[
\P(G_1=y_1,\ldots, G_n=y_n)=\P(G_{\sigma(1)}=y_1, \ldots, G_{\sigma(n)}=y_n)\,.\]
In this definition, either the random variables can be permuted, as above, or the values.

\smallskip

The following theorem of de Finetti is the fundamental result on exchangeable sequences of Bernoulli variables.

\begin{theoremletra} \label{teor:deFinetti} Let $(G_{j})_{j\ge 1}$ be an {\upshape infinite} exchangeable sequence of Bernoulli random variables defined in some probability space. Then we have the following.
\begin{enumerate}[\rm (1)]
\item There exists a unique Borel probability measure $\nu$ on the interval $(0,1)$ so that, for any $n \ge 1$ and any list $(y_1,\ldots, y_n)$ extracted from $\{0,1\}$, it holds that
\[
\P(G_1=y_1, \ldots, G_n=y_n)=\int_0^1 \theta^{s_n} (1-\theta)^{n-s_n} \,d \nu(\theta)
\]
where $s_n$ denotes the sum $s_n=\sum_{j=1}^n y_j$, that is, the number of $y_j$ which are equal to $1$.

\item The frequency limit
\[\lim_{n\to \infty} \frac{1}{n} \sum_{j=1}^n G_j\triangleq M \quad \mbox{exists almost surely}\]
and defines a random variable $M$ whose law is precisely $\nu${\rm:}
\[\P(M\in B)=\nu(B)\,, \quad \mbox{for any Borel set $B\subset (0,1)$}\,.\]

\item Conditioning upon any  value $\theta$ of the limit $M$, the variables $G_{j}$ are independent and, for any $n\ge 1$,
\[
\P\big(G_1=y_1, \ldots, G_n=y_n\, |\, M=\theta\big)=\theta^{s_n} \,(1-\theta)^{n-s_n}\,.\]
\end{enumerate}
\end{theoremletra}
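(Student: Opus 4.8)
The plan is to prove all three parts at once through a reverse-martingale argument, deriving the integral representation of part (1) as a by-product of the almost sure convergence of part (2) and the conditional independence of part (3). First I would introduce, for each $n\ge 1$, the $\sigma$-field $\mathcal{E}_n$ of events invariant under all permutations of the first $n$ coordinates $(G_1,\dots,G_n)$; these form a decreasing family $\mathcal{E}_1\supseteq\mathcal{E}_2\supseteq\cdots$ with tail $\mathcal{E}_\infty=\bigcap_n\mathcal{E}_n$, the exchangeable $\sigma$-field. Writing $T_n=\sum_{j=1}^n G_j$ and $M_n=T_n/n$, exchangeability forces $\E[G_i\mid\mathcal{E}_n]$ to be the same for every $i\le n$; since these add up to $\E[T_n\mid\mathcal{E}_n]=T_n$ (as $T_n$ is $\mathcal{E}_n$-measurable), each equals $M_n$. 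The tower property, together with $\mathcal{E}_{n+1}\subseteq\mathcal{E}_n$, then yields $\E[M_n\mid\mathcal{E}_{n+1}]=\E[G_1\mid\mathcal{E}_{n+1}]=M_{n+1}$, so $(M_n,\mathcal{E}_n)$ is a bounded reverse martingale. The reverse-martingale convergence theorem gives that $M_n$ converges almost surely and in $L^1$ to a limit $M=\E[G_1\mid\mathcal{E}_\infty]$, which is exactly part (2); I then \emph{define} $\nu$ to be the law of $M$, a Borel probability measure on $[0,1]$.

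Next I would establish the conditional independence of part (3). For distinct indices $i_1,\dots,i_k\le n$, exchangeability makes $\E[G_{i_1}\cdots G_{i_k}\mid\mathcal{E}_n]$ independent of the choice, and summing the elementary identity $\sum_{\mathrm{distinct}} G_{i_1}\cdots G_{i_k}=T_n(T_n-1)\cdots(T_n-k+1)$ over ordered $k$-tuples gives
\[
\E[G_1\cdots G_k\mid\mathcal{E}_n]=\frac{T_n(T_n-1)\cdots(T_n-k+1)}{n(n-1)\cdots(n-k+1)}.
\]
Letting $n\to\infty$ and using $M_n\to M$ together with the reverse-martingale convergence of the left-hand side, I obtain $\E[G_1\cdots G_k\mid\mathcal{E}_\infty]=M^k$; the same bookkeeping applied to an arbitrary pattern yields $\E\big[\mathbf 1\{G_1=y_1,\dots,G_n=y_n\}\mid\mathcal{E}_\infty\big]=M^{s_n}(1-M)^{n-s_n}$, which is precisely part (3) with $\theta=M$. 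Taking expectations and recalling $\nu=\mathrm{law}(M)$ then produces the representation of part (1). Uniqueness follows because that representation forces $\int_0^1\theta^k\,d\nu(\theta)=\P(G_1=\cdots=G_k=1)$ for every $k$, and a probability measure on the bounded interval $[0,1]$ is determined by its moments (determinacy of the Hausdorff moment problem).

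The main obstacle I anticipate is the limit in the conditional-independence step: one must simultaneously justify the almost sure convergence of the explicit falling-factorial ratio to $M^k$ and the reverse-martingale convergence $\E[\,\cdot\mid\mathcal{E}_n]\to\E[\,\cdot\mid\mathcal{E}_\infty]$ of the bounded functionals, and then upgrade the identities for products to the fully factorized conditional law. A secondary point concerns the support of $\nu$: the argument naturally yields a measure on the \emph{closed} interval $[0,1]$, so one must observe that atoms at the endpoints are absent in the situations of interest here (for P\'olya's urn started from $(a_0,b_0)$ with $a_0,b_0\ge 1$ the limit law is $\textsc{beta}(a_0,b_0)$, which has no atoms), so that $\nu$ may indeed be regarded as living on $(0,1)$. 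As an alternative route to part (1) and its uniqueness, I could instead check directly that $c_k=\P(G_1=\cdots=G_k=1)$ is completely monotone — its iterated differences $(-1)^m\Delta^m c_k$ are the nonnegative pattern probabilities — and invoke Hausdorff's theorem to produce $\nu$; this, however, would still require a separate argument for the almost sure convergence in part (2), which is why I favour the reverse-martingale approach.
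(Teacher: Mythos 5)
The paper does not prove this statement at all: it is quoted as a classical theorem (hence the letter label ``Theorem B''), with the proof deferred to the cited literature \cite{deF}, \cite{deF2}, \cite{Aldous}, \cite{HeathSudderth}, \cite{Kingman}, \cite{Kirsch}. So there is no internal argument to compare yours against; what you propose is the standard modern proof by reverse (backward) martingales, essentially the one in \cite{Aldous}. Your outline is sound and complete in its key steps: the identity $\E[G_i\mid\mathcal{E}_n]=T_n/n$ for $i\le n$ (permutation invariance of the law and of events in $\mathcal{E}_n$), the reverse-martingale property and L\'evy's downward theorem giving a.s.\ and $L^1$ convergence to $M=\E[G_1\mid\mathcal{E}_\infty]$, the falling-factorial computation $\E[G_1\cdots G_k\mid\mathcal{E}_n]=T_n(T_n-1)\cdots(T_n-k+1)/\big(n(n-1)\cdots(n-k+1)\big)\to M^k$, the expansion of a general pattern by writing each $\mathbf{1}\{G_i=y_i\}$ as $G_i$ or $1-G_i$, and uniqueness of $\nu$ via the Hausdorff moment problem on $[0,1]$. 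Two details you partly flag deserve explicit mention: (i) part (3) as stated conditions on $M$, not on $\mathcal{E}_\infty$; since your conditional probability given $\mathcal{E}_\infty$ is a function of $M$ alone, the tower property transfers the identity to $\sigma(M)$, which closes that gap; (ii) the endpoint issue is a genuine (if harmless) imprecision in the statement as quoted: in full generality $\nu$ lives on $[0,1]$ (the constant sequence $G_j\equiv 1$ is exchangeable with $\nu=\delta_1$), and your remark that the Pólya-urn mixture is $\textsc{beta}(a_0,b_0)$, hence carried by $(0,1)$, is exactly why the restriction costs nothing in this paper. Your alternative route via complete monotonicity and Hausdorff's theorem is closer in spirit to \cite{HeathSudderth} and \cite{Kirsch}, and, as you correctly note, it would not by itself yield the almost sure convergence in part (2), which is the part this paper actually uses (to define the limit slope $L$ and the mixture representation), so the martingale route is the right choice here.
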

The probability measure $\nu$ is known as the \textit{de Finetti mixture measure} for the exchangeable sequence~$(G_{j})_{j\ge 1}$.

\smallskip

Observe that from (1) of Theorem \ref{teor:deFinetti} it follows that, for $n \ge 1$ and $0\le k \le n$,
\begin{equation}
\label{eq:distribution sumGj}
\P\Big(\sum_{j=1}^n G_j=k\Big)=\int_0^1 \binom{n}{k} \theta^k (1-\theta)^{n-k} d\nu(\theta)=
\int_0^1 \P(\textsc{bin}(n, \theta)=k)\, d\nu(\theta)\,,
\end{equation}
and, moreover, that from (3) of Theorem \ref{teor:deFinetti} it follows that
\begin{equation}
\label{eq:distribution sumGj bis}
\P\Big(\sum_{j=1}^n G_j=k\,\big|\, M=\theta\Big)=\P (\textsc{bin}(n, \theta)=k )\,, \quad \mbox{for $n \ge 1$ and $0\le k \le n$}\,.
\end{equation}
Thus, conditioning on the value $\theta$ of $M$,  $(G_j)_{j\ge 1}$ is a sequence of independent and identically distributed variables, actually, Bernoulli variables with parameter $\theta$.

Unconditionally, each variable $G_j$ is a Bernoulli variable with parameter $\int_0^1 \theta d\nu(\theta)$.

\smallskip

Observe that statement (1) of Theorem \ref{teor:deFinetti} is in fact a characterization of exchangeability: any  sequence $(G_j)_{j\ge 1}$ satisfying  (1) of Theorem \ref{teor:deFinetti} is exchangeable. This is so because the probabilities $\P(G_1=y_1, \ldots, G_n=y_n)$ depend only on the sum $s_n=\sum_{j=1}^n y_j$ (the number of 1's among the $y_j$), and not on the order of the $y_j$.

\subsection{Exchangeability and P\'{o}lya's urn and walk}\label{section:exchange polya urn walk}

Next we study P\'{o}lya's urn and P\'{o}lya's  walk from the perspective of exchangeability. In what follows, the urn starts with a composition of $a_0$ amber balls and $b_0$ blue balls, or the walk starts at $(a_0, b_0)$.

For each $j \ge 1$, we let $F_j$ be the Bernoulli variable which takes the value $1$ if the ball added at stage $j$ is amber, and $F_j=0$ if the ball added at stage $j$ is blue. In terms of the walk,  $F_j=1$ if $\mathbb{Y}_j-\mathbb{Y}_{j-1}=(1,0)$.

For $n\ge 1$, we have   that $\sum_{j=1}^n F_j=a_n-a_0$ for the urn, and  that
\begin{equation}\label{eq:jumps in terms of sumFj}
\mathbb{Y}_n-\mathbb{Y}_0=\Big(\sum_{j=1}^n F_j, n-\sum_{j=1}^n F_j\Big)\,,\end{equation}
for the walk.

\smallskip

For  integer $n \ge 1$ and for $x_1, \ldots, x_n\in \{0,1\}$, and with $t_n=\sum_{j=1}^n x_j$, it follows readily from conditioning and by induction that
\begin{equation}
\label{eq:exchange Fn1}
\P(F_1=x_1, \ldots, F_n=x_n)
=\frac{\prod_{j=0}^{t_n-1} (a_0+j)\, \prod_{j=0}^{n-t_n-1} (b_0+j)}{\prod_{j=0}^{n-1} (a_0+b_0+j)}\,\cdot
\end{equation}
The exchangeability of the sequence $(F_j)_{j\ge 1}$ follows from \eqref{eq:exchange Fn1} since the probability in there only depends on the sum $t_n$ of the $x_j$ and not on the order of the $x_j$.

We can rewrite \eqref{eq:exchange Fn1}, in terms of Beta functions, and using \eqref{eq:def beta function}, as
\begin{align}
\nonumber
\P(F_1=x_1, \ldots, F_n=x_n)
&=\frac{\Gamma(a_0+t_n)}{\Gamma(a_0)}\,
\frac{\Gamma(b_0+n-t_n)}{\Gamma(b_0)}\,
\frac{\Gamma(a_0+b_0)}{\Gamma(a_0+b_0+n)}
\\ \label{eq:exchange Fn}
&=\frac{\textrm{Beta}(a_0+t_n, b_0+n-t_n)}{\textrm{Beta}(a_0,b_0)}\,\cdot
\end{align}
Or even more compactly, using the expression for the density $f_{a_0,b_0}(\alpha)$ in \eqref{eq:formula density fab}, as
%
\begin{equation}\label{eq:exchange Fn bis}
\P(F_1=x_1, \ldots, F_n=x_n)=\int_0^1 \alpha^{t_n}(1-\alpha)^{n-t_n} f_{a_0,b_0}(\alpha) \,d\alpha
\end{equation}
This formula \eqref{eq:exchange Fn bis} shows, in particular,  that the de Finetti mixture measure for the exchangeable sequence $(F_j)_{j\ge 1}$ is given by $d\nu(\alpha)=f_{a_0,b_0}(\alpha) \,d\alpha$, i.e., it is a  $\textsc{beta}(a_0,b_0)$ distribution.

The variables $F_j$ are Bernoulli variables with a common probability of success:
\[\P(F_j=1)=\frac{a_0}{a_0+b_0}=\int_0^1 \alpha f_{a_0, b_0}(\alpha) \, d\alpha\,, \quad \mbox{for each $j \ge 1$}\,;\]
but they are not independent.

De Finetti's Theorem \ref{teor:deFinetti} applied to the sequence $(F_j)_{j\ge 1}$ gives  us further that
\[\lim_{n\to \infty} \frac{1}{n}\sum_{j=1}^n F_j\triangleq L \quad \mbox{exists almost surely}\]
and that this limit defines a random variable $L$ with values in $(0,1)$ whose distribution is, by virtue of  \eqref{eq:exchange Fn bis}, precisely $d\nu(\alpha)=f_{a_0,b_0}(\alpha) \,d\alpha$, i.e., the limit variable $L$ is a $\textsc{beta}(a_0,b_0)$ random variable. Moreover,  conditioning on a limit value $\alpha$ of $L$ we have that
\[
\P(F_1=x_1, \ldots, F_n=x_n\,|\, L=\alpha)=\alpha^{t_n} (1-\alpha)^{n-t_n}\,, \quad \mbox{for any $\alpha\in (0,1)$}\,.
\]

Now, formulas \eqref{eq:distribution sumGj} and \eqref{eq:distribution sumGj bis} applied to the jumps $\mathbb{Y}_n-\mathbb{Y}_0$ of P{\' o}lya's walk written as in \eqref{eq:jumps in terms of sumFj} give us that, for $n \ge 1$ and $0\le k\le n$,
\[
\P\big(\mathbb{Y}_n-\mathbb{Y}_0=(k,n-k)\big)=\int_0^1 \P(\textsc{bin}(n,\alpha)=k) \,f_{a_0,b_0}(\alpha)\, d\alpha\,,
\]
and that, for $n \ge 1$ and $0\le k\le n$ and $\alpha \in (0,1)$,
\[
\P\big(\mathbb{Y}_n-\mathbb{Y}_0=(k,n-k)\,|\, L=\alpha\big)= \P(\textsc{bin}(n,\alpha)=k) \,.
\]

\subsubsection{Limit distribution of the slope of the walk}

We consider P\'{o}lya's walk starting from an initial position $(a_0,b_0)\in \mathbb{N}^2$.

We have  $\mathbb{Y}_n-\mathbb{Y}_0=(a_n-a_0, b_n-b_0)=\big(\sum_{j=1}^n F_j, n-\sum_{j=1}^n F_j\big)$, for $n \ge 1$.

Write also $\mathbb{Y}_n-\mathbb{Y}_0$, for $n \ge 1$, in polar coordinates: $\mathbb{Y}_n-\mathbb{Y}_0=r_n (\cos \psi_n, \sin \psi_n)$ with $\psi_n \in (0,\pi/2)$ and $r_n \ge 1$. Both $r_n$ and $\psi_n$ are random variables, for $n \ge 1$.

Since $\frac{1}{n}\sum_{j=1}^n F_j$ tends to $L$ almost surely, we see that $(a_n-a_0)/n$ and $(b_n-b_0)/n$ tend almost surely to $L$ and $1-L$, respectively, and so
\[
\frac{r_n}{n}=\sqrt{\frac{(a_n-a_0)^2}{n^2}+\frac{(b_n-b_0)^2}{n^2}}\to\sqrt{L^2+(1-L)^2}\,, \quad \mbox{almost surely as $n \to \infty$}\,,\]
and, also,
\[\tan \psi_n=\frac{b_n-b_0}{a_n-a_0}=\frac{(b_n-b_0)/n}{(a_n-a_0)/n}\to \frac{1-L}{L}\quad \mbox{almost surely as $n \to \infty$}\,.\]

Thus, the random angle
$\psi_n$ tends, as $n \to \infty$ almost surely to a variable $\Psi$ given by $\Psi=\arctan \tfrac{1-L}{L}$.
Since $L$ is a $\textsc{beta}(a_0, b_0)$ variable, a change of variables gives that the density function of $\Psi$ is
\[
\frac{1}{\textrm{Beta}(a_0,b_0)}\,\frac{\sin^{b_0-1}(\psi) \cos^{a_0-1}(\psi)}{(\sin \psi+\cos \psi)^{a_0+b_0}}\,, \quad \mbox{for $\psi \in (0,\pi/2)$}\,.\]

In terms of this variable $\Psi$, we have
\[
\lim_{n \to \infty}\frac{r_n}{n}=\frac{1}{\sin \Psi+\cos\Psi}\,, \quad \mbox{almost surely}\,,\]
and
\[\lim_{n \to \infty} \psi_n=\Psi\,,  \quad \mbox{almost surely}\,.\]

In the basic case, when  $a_0=b_0=1$, the density 
 function of $\Psi$ is $1/(1+\sin(2 \psi))$, for $\psi \in (0,\pi/2)$, which approaches 1 (its supremum) as $\psi\to 0$ and as $\psi\to \pi/2$, and takes its minimum value, 1/2, at $\psi=\pi/4$.

\subsection{Polya's walk as mixture of $\alpha$-random walks}\label{section:polya walk as mixture}

In this section, we consider  the P\'{o}lya walk $(\mathbb{Y}_n)_{n\ge 0}$ and $\alpha$-random walks $(\mathbb{Z}_{\alpha,n})_{n\ge 0}$,  all of them starting  at some initial point $\mathbb{Y}_0=\mathbb{Z}_{\alpha,0}=(a_0,b_0)\in \mathbb{N}^2$.

Conditioning on $L=\alpha$, the sequence $(F_j)_{j\ge 1}$ consists of independent Bernoulli variables with parameter $\alpha$, and thus the distribution of P\'{o}lya's walk $(\mathbb{Y}_n)_{n\ge 1}$, \textit{conditioned on the limit $L$ taking the value $\alpha\in (0,1)$},  coincides with the distribution of the $\alpha$-random walk~$(\mathbb{Z}_{\alpha,n})_{n\ge 1}$:
\begin{equation}
\label{eq:contioning on limit}
(\mathbb{Y}_1, \ldots, \mathbb{Y}_{n} \,|\, L=\alpha)\overset{\rm d}{=} (\mathbb{Z}_{\alpha,1}, \ldots, \mathbb{Z}_{\alpha,{n}})\, , \quad \mbox{for each ${n}\ge 1$}\, . \end{equation}

For a fixed time $n \ge 1$, equation  \eqref{eq:contioning on limit} means, in particular,  that
\[
\P( \mathbb{Y}_n=\mathbb{Y}_0+(k,n-k)\,|\, L=\alpha) =\binom{n}{k} \alpha^k (1-\alpha)^{n-k}=\P(\mathbb{Z}_{\alpha,n}=\mathbb{Z}_{\alpha,0}+(k,n-k))\, , \]
for every $k,n$ such that  $0 \le k \le n$ and all $\alpha \in (0,1)$,
and that
\begin{equation}\label{eq:mixture of random walks one step}
\begin{aligned}\P( \mathbb{Y}_n=\mathbb{Y}_0+(k,n-k))&=\int_0^1 \P(\mathbb{Z}_{\alpha,n}=\mathbb{Z}_{\alpha,0}+(k,n-k)) \,f_{a_0,b_0}(\alpha) \,d\alpha
\\&=\int_0^1 \P(\textsc{bin}(n,\alpha)=k)f_{a_0,b_0}(\alpha) \,d\alpha \,,\end{aligned}\end{equation}
for every $k,n$ such that $0 \le k \le n$.

In other terms, \textit{P\'{o}lya's walk $ (\mathbb{Y}_n )_{n \ge 0}$ starting from the initial position $(a_0,b_0)$ is a mixture of\/ $\alpha$-random walks $ (\mathbb{Z}_{\alpha,n} )_{n \ge 0}$, all starting at $(a_0,b_0)$, where the mixture parameter $\alpha \in (0,1)$ follows a $\textsc{Beta}(a_0,b_0)$ probability distribution.}

In the basic case when the initial position is $(a_0,b_0)=(1,1)$, the density $f_{1,1}(\alpha)$ is identically 1 in $[0,1]$ and, for each $n \ge 1$ and $0 \le k \le n$,
\begin{equation}
\label{eq:mixture of random walks one step from 11}\P( \mathbb{Y}_n=(1,1)+(k,n-k)) =\int_0^1 \binom{n}{k} \alpha^k (1-\alpha)^{n-k} \,d\alpha
=\frac{1}{n+1}\,\cdot \end{equation}
Thus, in that case,  the possible positions at time $n$ of P\'{o}lya's walk starting from $(1,1)$ are equally likely.

%

\smallskip
Recall, from \eqref{eq:def QN} and \eqref{eq:def SNa}, the random variables $Q_N$ and $S_{\alpha, N}$ registering proportion of visible times up to time $N$ for  P\'{o}lya's walk and the $\alpha$-random walk:
\[Q_N=\frac{1}{N} \sum_{n=1}^N I(\mathbb{Y}_n) \quad \mbox{and}\quad S_{\alpha,N}=\frac{1}{N} \sum_{n=1}^N I(\mathbb{Z}_{\alpha,n})\,, \quad \mbox{for $\alpha\in (0,1)$ and $N\ge 1$}\, .\]
\begin{lemma}\label{lemma:media y var de QN}
For each $N \ge 1$, we have
\begin{align}
\label{eq:expectation of proportion of visible at time n}
\E(Q_N)&=\int_0^1 \E\big(S_{\alpha,N}\big)\, f_{a_0,b_0}(\alpha)\, d \alpha\, ,
\\
\label{eq:variance of proportion of visible at time n, con H}
\V(Q_N)&=\int_0^1 \V(S_{\alpha,N})\, f_{a_0,b_0}(\alpha)\, d \alpha+\V(H_N)\,,
\end{align}
where $H_N$ is a variable defined in the probability space $(0,1)$ with probability density $f_{a_0,b_0}(\alpha)$ and with values
$H_N(\alpha)=\E(S_{\alpha,N})$ for each $\alpha \in (0,1)$.
\end{lemma}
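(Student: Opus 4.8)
The plan is to condition everything on the limiting random variable $L$ furnished by de Finetti's theorem, exploiting the mixture representation \eqref{eq:contioning on limit}. The crucial observation is that, because $(\mathbb{Y}_1, \ldots, \mathbb{Y}_N \mid L=\alpha)\overset{\rm d}{=}(\mathbb{Z}_{\alpha,1}, \ldots, \mathbb{Z}_{\alpha,N})$, the conditional distribution of $Q_N$ given $L=\alpha$ coincides with the (unconditional) distribution of $S_{\alpha,N}$. Since $Q_N$ is a function of $(\mathbb{Y}_1,\dots,\mathbb{Y}_N)$ and $S_{\alpha,N}$ is the same function of $(\mathbb{Z}_{\alpha,1},\dots,\mathbb{Z}_{\alpha,N})$, this gives the two identities
\[
\E(Q_N\mid L=\alpha)=\E(S_{\alpha,N})=H_N(\alpha)\qquad\text{and}\qquad \V(Q_N\mid L=\alpha)=\V(S_{\alpha,N}),
\]
valid for every $\alpha\in(0,1)$. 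Everything else reduces to standard conditioning identities.

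For the expectation formula \eqref{eq:expectation of proportion of visible at time n}, I would invoke the tower property, $\E(Q_N)=\E\big(\E(Q_N\mid L)\big)=\E\big(H_N(L)\big)$. Recalling from Section~\ref{section:exchange polya urn walk} that $L$ is a $\textsc{beta}(a_0,b_0)$ variable with density $f_{a_0,b_0}$, integrating $H_N(\alpha)=\E(S_{\alpha,N})$ against this density yields exactly the right-hand side.

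For the variance formula \eqref{eq:variance of proportion of visible at time n, con H}, the key tool is the law of total variance, $\V(Q_N)=\E\big(\V(Q_N\mid L)\big)+\V\big(\E(Q_N\mid L)\big)$. The first term integrates $\V(S_{\alpha,N})$ against $f_{a_0,b_0}$, giving the integral appearing in \eqref{eq:variance of proportion of visible at time n, con H}; the second term is precisely $\V\big(H_N(L)\big)=\V(H_N)$, by the very definition of $H_N$ as the random variable $\alpha\mapsto\E(S_{\alpha,N})$ defined on the probability space $\big((0,1),f_{a_0,b_0}(\alpha)\,d\alpha\big)$.

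The main obstacle here is conceptual rather than computational: one must recognize that conditioning on $L$ cleanly splits the total randomness of $Q_N$ into the within-walk fluctuation (captured by $\V(S_{\alpha,N})$, averaged over $\alpha$) and the between-walk fluctuation coming from the random slope parameter itself (captured by $\V(H_N)$). Care is needed only in verifying the conditional-distribution identity at the level of the full trajectory $(\mathbb{Y}_1,\dots,\mathbb{Y}_N)$, so that the functional $Q_N$ transfers correctly; this is exactly the content of \eqref{eq:contioning on limit}. Once that identity is in place, both formulas follow from the tower property and the law of total variance with no substantial calculation.
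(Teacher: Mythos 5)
Your proposal is correct, and it rests on the same underlying mechanism as the paper's proof---conditioning on the de Finetti limit $L$ so that P\'{o}lya's walk becomes an $\alpha$-random walk---but the execution is genuinely more abstract. The paper never names the law of total variance: it works with the one-time and two-time marginal mixture identities \eqref{eq:mixture of random walks one step} and \eqref{eq:mixture of random walks two steps}, sums explicitly over lattice positions to obtain $\E(I(\mathbb{Y}_n))=\int_0^1 \E(I(\mathbb{Z}_{\alpha,n}))\,f_{a_0,b_0}(\alpha)\,d\alpha$ and $\E(I(\mathbb{Y}_n)I(\mathbb{Y}_{n+m}))=\int_0^1 \E(I(\mathbb{Z}_{\alpha,n})I(\mathbb{Z}_{\alpha,n+m}))\,f_{a_0,b_0}(\alpha)\,d\alpha$, deduces from these that $\E(Q_N^2)=\int_0^1 \E(S_{\alpha,N}^2)\,f_{a_0,b_0}(\alpha)\,d\alpha$, and then reaches \eqref{eq:variance of proportion of visible at time n, con H} by adding and subtracting $\int_0^1 \E(S_{\alpha,N})^2 f_{a_0,b_0}(\alpha)\,d\alpha$ inside $\E(Q_N^2)-\E(Q_N)^2$; in effect it re-derives the total-variance decomposition by hand in this particular case. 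You instead lift the conditional identity to the level of the whole trajectory, via \eqref{eq:contioning on limit}, note that $Q_N$ and $S_{\alpha,N}$ are the same measurable functional of the respective trajectories, and then quote the tower property and $\V(Q_N)=\E(\V(Q_N\,|\,L))+\V(\E(Q_N\,|\,L))$. Your route buys brevity and a transparent interpretation (within-walk versus between-walk fluctuation); the paper's route buys self-containedness, staying at the level of elementary finite sums and avoiding any appeal to regular conditional distributions or to the law of total variance as an external tool. The one point that needs care in your argument---that the conditional law of a function of the trajectory given $L=\alpha$ coincides with the law of the same function of the $\alpha$-walk, for ($\nu$-almost) every $\alpha$---is exactly what \eqref{eq:contioning on limit} supplies, so there is no gap.
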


Equations \eqref{eq:expectation of proportion of visible at time n} and 
\eqref{eq:variance of proportion of visible at time n, con H} will allow us to translate appropriate  estimates of the means and variances of the average time $S_{\alpha,N}$ of the random walks $\mathbb{Z}_{\alpha,n}$ into estimates of the mean and  variance of the average time $Q_N$ of P\'{o}lya's walk $\mathbb{Y}_{n}$.

\begin{proof}
As a consequence of \eqref{eq:mixture of random walks one step}, for the P\'{o}lya walk starting at $(a_0,b_0)$ we have
\begin{equation}
\label{eq:expectation of visible at time n}
\E(I(\mathbb{Y}_n))=\int_0^1 \E(I(\mathbb{Z}_{\alpha,n}))\, f_{a_0,b_0}(\alpha)\, d \alpha\, ,
\end{equation}
for $n \ge 1$. This is so because
\begin{align*}
\E(I(\mathbb{Y}_n))
&
=\sum_{k=0}^n \P(\mathbb{Y}_n=(a_0,b_0)+(k,n-k)) \cdot  I\big((a_0,b_0)+(k,n-k)\big)\\
&=\int_0^1 \Big[\sum_{k=0}^n\P(\mathbb{Z}_{\alpha,n}=(a_0,b_0)+(k,n-k))  \cdot  I((a_0,b_0)+(k,n-k))\Big]\,f_{a_0,b_0}(\alpha) \,d\alpha\\
&=\int_0^1 \E(I(\mathbb{Z}_{\alpha,n}) )\, f_{a_0,b_0}(\alpha)\, d \alpha\, .
\end{align*}
Equation \eqref{eq:expectation of visible at time n} gives immediately,  for walks starting at $(a_0,b_0)$, that
\begin{equation*}
\E(Q_N)=\int_0^1 \E(S_{\alpha,N})\, f_{a_0,b_0}(\alpha)\, d \alpha\, ,\quad \mbox{for each $N \ge 1$}\,,
\end{equation*}
which is the first claim of the lemma.

Now, for times $ n$ and $n+m$, where $n,m\ge 1$, and positions $0\le k \le n$ and $0 \le q \le m$, one has that for all $\alpha \in (0,1)$,
\begin{align*}
\P( \mathbb{Y}_n &=(a_0,b_0)+(k,n-k), \mathbb{Y}_{n+m}=\mathbb{Y}_n+(q,m-q)\,|\, L=\alpha)
\\&
\qquad =\binom{n}{k} \alpha^k (1-\alpha)^{n-k}
\binom{m}{q} \alpha^q (1-\alpha)^{m-q}
\\
&\qquad =\P(\mathbb{Z}_{\alpha,n}=(a_0,b_0)+(k,n-k), \mathbb{Z}_{\alpha,n+m}=\mathbb{Z}_{\alpha,n}+(q,m-q))\, ,
\end{align*}
and that
\begin{equation}\label{eq:mixture of random walks two steps}
\begin{aligned}
&\P( \mathbb{Y}_n=(a_0,b_0)+(k,n-k),\, \mathbb{Y}_{n+m}=\mathbb{Y}_n+(q,m-q))
\\&=\int_0^1 \P\big(\mathbb{Z}_{\alpha,n}=(a_0,b_0)+(k,n-k),\, \mathbb{Z}_{\alpha,n+m}=\mathbb{Z}_{\alpha,n}+(q,m-q)\big)\,f_{a_0,b_0}(\alpha) \,d\alpha
\\&=\int_0^1 \P(\textsc{bin}(n,\alpha)=k)\,\P(\textsc{bin}(m,\alpha)=q)\, f_{a_0,b_0}(\alpha) \,d\alpha \,.\end{aligned}
\end{equation}

As a consequence of \eqref{eq:mixture of random walks two steps}, and analogously as the derivation of \eqref{eq:expectation of visible at time n}, we deduce that for walks starting at $(a_0,b_0)$ we have, for $n, m \ge 1$,  that
\begin{equation}
\label{eq:expectation of visible at time n and time m}
\E(I(\mathbb{Y}_n)  \cdot  I(\mathbb{Y}_{n+m}))=\int_0^1 \E(I(\mathbb{Z}_{\alpha,n}) \cdot  I(\mathbb{Z}_{\alpha,n+m})\big)\, f_{a_0,b_0}(\alpha)\, d \alpha\, .
\end{equation}

This gives that
\begin{equation*}
\E(Q_N^2)=\int_0^1 \E(S_{\alpha,N}^2)\, f_{a_0,b_0}(\alpha)\, d \alpha\, ,\quad \mbox{for each $N \ge 1$}\,,
\end{equation*}
and, in particular, that, for each $N \ge 1$,
\begin{align*}
&\V(Q_N)=\E(Q_N^2)-\E(Q_N)^2
\\&=
\int_0^1 \V(S_{\alpha,N})\, f_{a_0,b_0}(\alpha)\, d \alpha
+
\Big[\int_0^1 \E(S_{\alpha,N})^2 f_{a_0,b_0}(\alpha)\, d \alpha- \Big(\int_0^1 \E(S_{\alpha,N}) f_{a_0,b_0}(\alpha)\, d \alpha\Big)^2\Big].
\end{align*}
The second line in this equation contains the variance of a variable $H_N$ defined in the probability space $(0,1)$ with probability density $f_{a_0,b_0}(\alpha)$ and with values
\begin{equation*}
H_N(\alpha)=\E(S_{\alpha,N})\, , \quad \mbox{for each $\alpha \in (0,1)$}\,,
\end{equation*}
as claimed.
\end{proof}

\section{Asymptotic visibility of P\'{o}lya's walks: proof of Theorem \ref{teor:asymptotic coprime proportion of polya walks}}\label{section:proof of main}

For the mean and variance of the average visible time $S_{\alpha, N}$ of the $\alpha$-random walk, we have the following estimate.
\begin{prop}\label{prop:bounds for mean and var of RW dependence on alpha}
For each $\alpha \in  (0,1)$, the $\alpha$-random walk $(\mathbb{Z}_{\alpha,n})_{n\ge 0}$ with any given initial position $(a_0,b_0)\in \mathbb{N}^2$ satisfies the estimates
\begin{align}
\label{eq:mean of average time random walk}\E(S_{\alpha,N})&=\frac{1}{\zeta(2)}+\frac{1}{\sqrt{\alpha(1-\alpha)}}\, O\Big( \frac{1} {N^{1/4}}\Big)\, , \quad \mbox{as $N \to \infty$}\,,\\ \intertext{and}
\label{eq:variance of average time random walk}\V(S_{\alpha,N})&=\frac{1}{\alpha(1-\alpha)}\, O\Big( \frac{1} {N^{1/4}}\Big)\,, \quad \mbox{as $N \to \infty$}\,.
\end{align}
\end{prop}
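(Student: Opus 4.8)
The plan is to treat the two estimates in parallel, expressing the visibility indicator through M\"obius inversion and then controlling the resulting character sums by the characteristic function of the binomial distribution. Throughout write $X_n$ for the number of right-steps up to time $n$, so that $\mathbb{Z}_{\alpha,n}=(a_0+X_n,\,b_0+n-X_n)$ with $X_n\sim\textsc{bin}(n,\alpha)$, set $m_n=a_0+b_0+n$, and abbreviate $\beta=\alpha(1-\alpha)$. Using $\gcd(a,b)=\gcd(a,a+b)$ one has $I(\mathbb{Z}_{\alpha,n})=\delta_1(\gcd(a_0+X_n,m_n))$, so \eqref{eq:representation of indicator of gcd} gives $I(\mathbb{Z}_{\alpha,n})=\sum_{d\mid m_n}\mu(d)\,\uno[d\mid a_0+X_n]$. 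The single computational engine is the elementary identity, valid for any root of unity $\omega=e^{2\pi i t}$,
\[
\big|\E(\omega^{X_n})\big|=\big|1-\alpha+\alpha e^{2\pi i t}\big|^{n}=\big(1-2\beta(1-\cos 2\pi t)\big)^{n/2}\le e^{-n\beta(1-\cos 2\pi t)},
\]
which is where the factor $\beta=\alpha(1-\alpha)$ enters; combined with $1-\cos 2\pi t\ge c\,\|t\|^2$ ($\|t\|$ the distance to the nearest integer), it yields, by comparison with a Gaussian sum, the uniform-in-$d$ bound
\[
\frac1d\sum_{j=1}^{d-1}\big|1-\alpha+\alpha e^{2\pi ij/d}\big|^{n}\le \frac{C}{\sqrt{n\beta}}.
\]

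For the mean I would first take expectations to get $\E(I(\mathbb{Z}_{\alpha,n}))=\sum_{d\mid m_n}\mu(d)\,\P(X_n\equiv -a_0\pmod d)$, and split each probability as $1/d$ plus an error $E_{n,d}(\alpha)=\P(X_n\equiv -a_0\pmod d)-1/d$. The main part sums to $\sum_{d\mid m_n}\mu(d)/d=\phi(m_n)/m_n$ by \eqref{eq:Mobius y Euler}. The roots-of-unity filter expresses $E_{n,d}(\alpha)$ exactly as $\frac1d\sum_{j=1}^{d-1}\omega_d^{\,ja_0}(1-\alpha+\alpha\omega_d^{\,j})^n$, so the displayed bound gives $|E_{n,d}(\alpha)|\le C/\sqrt{n\beta}$ uniformly in $d$. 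Summing over the $\tau(m_n)=O(n^{1/4})$ divisors (by \eqref{eq:order of tau} with $\delta=1/4$) yields $\E(I(\mathbb{Z}_{\alpha,n}))=\phi(m_n)/m_n+\tfrac1{\sqrt\beta}O(n^{-1/4})$. Averaging over $1\le n\le N$ and using that $\sum_{m\le M}\phi(m)/m=\tfrac{6}{\pi^2}M+O(\log M)$ — itself a consequence of \eqref{eq:Mobius y Euler} and \eqref{eq:sum mu/d2entera} — turns the main term into $1/\zeta(2)+O(\log N/N)$, while the error averages to $\tfrac1{\sqrt\beta}O(N^{-1/4})$, giving \eqref{eq:mean of average time random walk}.

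For the variance I would write $\V(S_{\alpha,N})=\tfrac1{N^2}\sum_{n,n'}\mathrm{Cov}(I(\mathbb{Z}_{\alpha,n}),I(\mathbb{Z}_{\alpha,n'}))$; the diagonal contributes $O(1/N)$ and is harmless, so everything rests on the off-diagonal covariances. Writing $n'=n+h$ with $h\ge1$, double M\"obius inversion reduces the covariance to $\sum_{d\mid m_n,\,e\mid m_{n'}}\mu(d)\mu(e)\,C_{d,e}$, where $C_{d,e}$ is a joint-congruence deviation that a two-dimensional roots-of-unity filter writes as a double sum over $1\le j<d$ and $1\le k<e$. The crucial point is that, after subtracting the product of the marginals, all terms with $j=0$ or $k=0$ cancel, leaving only genuine cross terms carrying \emph{two} nontrivial characters. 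Each surviving term factors — or, for the coupled factor $\omega_d^{\,j}\omega_e^{\,k}$, reduces after summation in $j$ via the equidistribution bound above — into a product of two sums of the type met for the mean, one decaying in $n$ and one decaying in the separation $h$; this yields $|C_{d,e}|\le C/(\beta\sqrt{nh})$, uniformly in $d,e$, and here the presence of \emph{two} character sums is exactly what upgrades the single $1/\sqrt\beta$ of the mean to the $1/\beta$ of the variance. Bounding the divisor sums by $\tau(m_n)\tau(m_{n'})=O(N^{1/2})$ and using $\sum_{\ell\le N}\ell^{-1/2}=O(\sqrt N)$ in each of $n$ and $h$ then gives $\V(S_{\alpha,N})=\tfrac1\beta O(N^{-1/2})$, which in particular yields \eqref{eq:variance of average time random walk}.

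The main obstacle is the covariance estimate: one must (i) identify and exploit the cancellation of the $j=0$ and $k=0$ contributions, without which no decay in the time separation $h$ is available, and (ii) handle the coupled character $\omega_d^{\,j}\omega_e^{\,k}$ in the joint term, showing that summing over $j$ for fixed $k$ still obeys the $1/\sqrt{n\beta}$ bound (a shifted equidistribution-on-the-circle estimate). Tracking the exact power of $\beta=\alpha(1-\alpha)$ — rather than treating it as an immaterial constant as in \cite{CFF} — is the whole point, since the later integration of these estimates against the $\textsc{beta}(a_0,b_0)$ density will hinge on the integrability thresholds \eqref{eq:integral finita Beta con raices} and \eqref{eq:integral finita Beta sin raices}.
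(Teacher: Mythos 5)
Your treatment of the mean \eqref{eq:mean of average time random walk} is correct and is essentially the paper's own argument (M\"obius inversion over the divisors of $m_n=n+a_0+b_0$, equidistribution of the binomial in residue classes proved through the characteristic function, divisor bound $\tau(m_n)=O(n^{1/4})$, then averaging via $\sum_{d\le m}\mu(d)\lfloor m/d\rfloor/d$). The gap is in the variance. Your key uniform bound $|C_{d,e}|\le C/(\beta\sqrt{nh})$ is false, and so is the supporting claim in your point (ii) that for fixed $k$ the shifted sum $\frac1d\sum_{j\ne 0}\big|\E\big(e^{2\pi i(j/d+k/e)X_n}\big)\big|$ still obeys a $C/\sqrt{n\beta}$ bound: when $j/d+k/e\in\Z$ (for instance $d=e$ and $j=d-k$) the coupled character is trivial, that term equals $1$, and no decay in $n$ is available from it. Concretely, take $d=e=2$ and write $X_{n+h}=X_n+Y_h$ with $Y_h\sim\textsc{bin}(h,\alpha)$ independent of $X_n$; then
\[
\mathrm{Cov}\big(\mathbf{1}{[X_n \text{ odd}]},\,\mathbf{1}{[X_{n+h} \text{ odd}]}\big)
=\tfrac14\,(1-2\alpha)^h\big(1-(1-2\alpha)^{2n}\big),
\]
which for $\alpha\ne 1/2$ and fixed $h$ stays bounded away from $0$ as $n\to\infty$, so it cannot be $O\big(1/(\beta\sqrt{nh})\big)$. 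The correct structure (which your own factorization $\E(\omega_d^{jX_n}\omega_e^{kX_{n'}})=\E(e^{2\pi i(j/d+k/e)X_n})\,\E(\omega_e^{kY_h})$ already displays) is that the terms with $j/d+k/e$ at or near an integer decay \emph{only} in the separation $h$, through the factor $\E(\omega_e^{kY_h})$; a correct uniform bound is of the shape $|C_{d,e}|\le \frac{C}{\sqrt{h\beta}}\big(\frac1d+\frac{1}{\sqrt{n\beta}}\big)$, not $\frac{C}{\beta\sqrt{nh}}$.

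This is repairable rather than fatal: the extra $1/d$ is summable over the divisors of $m_n$, the pure $h$-decay still gives $\sum_{h\le N}h^{-1/2}=O(\sqrt N)$, and one ends with $\V(S_{\alpha,N})\le \frac{1}{\sqrt{\beta}}\,O(N^{-1/4})+\frac{1}{\beta}\,O(N^{-1/2})$, which suffices for \eqref{eq:variance of average time random walk} since $\beta\le 1/4$. But it is worth seeing how the paper sidesteps the whole difficulty: instead of a two\guio{dimensional} character expansion of the covariance, it conditions on the position $(a_0+l,\,b_0+n-l)$ at time $n$ and applies the one\guio{dimensional} Lemma~\ref{lema:imp} to the increment walk, with $M=m-n$, $s=a_0+l$, $t=a_0+b_0+n$. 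The main term $\sum_{d\mid m+a_0+b_0}\mu(d)/d$ of that inner sum does not depend on the intermediate position $l$, so it factors out and multiplies the main term of the outer sum, producing $\E\big(I(\mathbb{Z}_{\alpha,n})I(\mathbb{Z}_{\alpha,m})\big)\approx\E\big(I(\mathbb{Z}_{\alpha,n})\big)\E\big(I(\mathbb{Z}_{\alpha,m})\big)$ with error $\frac{1}{\sqrt{\beta}}O(n^{-1/4})+\frac{1}{\beta}O\big(m^{1/4}/\sqrt{m-n}\big)$ --- note that the second error decays only in $m-n$, exactly the phenomenon your claimed bound contradicts.
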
 The implied constants in the big-O above depend  on $(a_0,b_0)$, but not upon $\alpha\in (0,1)$.

Proposition \ref{prop:bounds for mean and var of RW dependence on alpha} appears in \cite{CFF} for the case $(a_0,b_0)=(0,0)$ (as Propositions 3.1 and 3.2 there), except for the error terms: here, the factor $N^{-1/4}$ will be enough, but we record the explicit dependence of the bounds on the probability $\alpha$. For our proof of Theorem~\ref{teor:asymptotic coprime proportion of polya walks}, this dependence in $\alpha$ is an essential component. Details of the proof of Proposition~\ref{prop:bounds for mean and var of RW dependence on alpha} are the content of Section \ref{section:estimates for random walk}.

Now we translate Proposition \ref{prop:bounds for mean and var of RW dependence on alpha}  about $\alpha$-random walks into a corresponding result for P\'{o}lya's walk and appeal to the second moment method (Proposition \ref{prop:Borel}) towards completing the proof of Theorem \ref{teor:asymptotic coprime proportion of polya walks}.

For the average visible time $Q_N$ of P\'{o}lya's walk, we have the following asymptotic estimates for its mean and variance.
\begin{cor}\label{cor:bounds for mean and var of PW} For the P\'{o}lya walk $(\mathbb{Y}_n)_{n\ge 0}$ with initial position $(a_0,b_0)$ such that $a_0,b_0\ge 2$ we have that, as $N \to \infty$,
\[
\E(Q_{N})=\frac{1}{\zeta(2)}+O\Big( \frac{1} {N^{1/4}}\Big)\quad \mbox{and}\quad
\V(Q_{N})=O\Big( \frac{1} {N^{1/4}}\Big)\,.
\]
\end{cor}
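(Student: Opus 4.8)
The plan is to substitute the $\alpha$-uniform estimates of Proposition~\ref{prop:bounds for mean and var of RW dependence on alpha} into the mixture identities of Lemma~\ref{lemma:media y var de QN}, and then control the resulting integrals against the density $f_{a_0,b_0}$ by means of the integrability criteria \eqref{eq:integral finita Beta con raices} and \eqref{eq:integral finita Beta sin raices}. The role of the hypothesis $a_0,b_0\ge 2$ is precisely to guarantee that the singular weights $1/\sqrt{\alpha(1-\alpha)}$ and $1/(\alpha(1-\alpha))$ appearing in the error terms are integrable against $f_{a_0,b_0}$.

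For the mean, I would combine \eqref{eq:expectation of proportion of visible at time n} with \eqref{eq:mean of average time random walk} to write
\[
\E(Q_N)=\frac{1}{\zeta(2)}\int_0^1 f_{a_0,b_0}(\alpha)\,d\alpha+O\Big(\frac{1}{N^{1/4}}\Big)\int_0^1 \frac{f_{a_0,b_0}(\alpha)}{\sqrt{\alpha(1-\alpha)}}\,d\alpha\,.
\]
The first integral equals $1$ because $f_{a_0,b_0}$ is a probability density, and the second is finite by \eqref{eq:integral finita Beta con raices} since $a_0,b_0\ge 2>1/2$; this yields $\E(Q_N)=1/\zeta(2)+O(N^{-1/4})$.

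For the variance I would bound the two summands of \eqref{eq:variance of proportion of visible at time n, con H} separately. The first, $\int_0^1 \V(S_{\alpha,N})\,f_{a_0,b_0}(\alpha)\,d\alpha$, is at most $O(N^{-1/4})\int_0^1 f_{a_0,b_0}(\alpha)/(\alpha(1-\alpha))\,d\alpha$ by \eqref{eq:variance of average time random walk}, and this integral is finite by \eqref{eq:integral finita Beta sin raices}, now using the full strength $a_0,b_0\ge 2>1$. For the second summand, observe that $H_N(\alpha)-1/\zeta(2)$ is exactly the error term in \eqref{eq:mean of average time random walk}, so $|H_N(\alpha)-1/\zeta(2)|\le C\,N^{-1/4}/\sqrt{\alpha(1-\alpha)}$ for some constant $C=C(a_0,b_0)$. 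Since the variance is unchanged by the constant shift and is dominated by the second moment,
\[
\V(H_N)\le \int_0^1 \big(H_N(\alpha)-\tfrac{1}{\zeta(2)}\big)^2 f_{a_0,b_0}(\alpha)\,d\alpha\le \frac{C^2}{N^{1/2}}\int_0^1 \frac{f_{a_0,b_0}(\alpha)}{\alpha(1-\alpha)}\,d\alpha=O\Big(\frac{1}{N^{1/2}}\Big)\,,
\]
again by \eqref{eq:integral finita Beta sin raices}. Adding the two contributions gives $\V(Q_N)=O(N^{-1/4})$.

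Once the integrability is secured, the argument is a routine substitution with no real obstacle; the only genuinely essential point — and the reason the statement demands $a_0,b_0\ge 2$ rather than merely $\ge 1$ — is that the variance estimate forces the heavier weight $1/(\alpha(1-\alpha))$, which is integrable against $f_{a_0,b_0}$ exactly when $a_0,b_0>1$, a condition identical to $a_0,b_0\ge 2$ for integer starting coordinates.
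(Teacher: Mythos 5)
Your proposal is correct and follows essentially the same route as the paper: substitute the $\alpha$-uniform estimates of Proposition~\ref{prop:bounds for mean and var of RW dependence on alpha} into Lemma~\ref{lemma:media y var de QN}, use \eqref{eq:integral finita Beta con raices} for the mean, \eqref{eq:integral finita Beta sin raices} for the first variance term, and bound $\V(H_N)$ by the second moment of $H_N-1/\zeta(2)$ (the paper phrases this as ``the mean minimizes square deviation''), which squares the mean-error term and again invokes \eqref{eq:integral finita Beta sin raices}. Your closing remark correctly identifies that $a_0,b_0\ge 2$ is needed only for the variance, exactly as in the paper.
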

Note the restriction $a_0,b_0\ge 2$. 

Again, the implied constants in the big-O above depend on $(a_0,b_0)$, but not upon $\alpha\in (0,1)$.

\begin{proof}
[Proof of Corollary {\rm\ref{cor:bounds for mean and var of PW}}]
The result for $\E(Q_{N})$ follows by applying the estimate
\eqref{eq:mean of average time random walk} of Proposition \ref{prop:bounds for mean and var of RW dependence on alpha}
to the formula \eqref{eq:expectation of proportion of visible at time n} and integrating on $\alpha$ (recalling the finiteness of the integral in \eqref{eq:integral finita Beta con raices}).

Formula \eqref{eq:variance of proportion of visible at time n, con H}
for $\V(Q_N)$ has two terms.
The first one is handled by integrating the estimate \eqref{eq:variance of average time random walk}
of Proposition \ref{prop:bounds for mean and var of RW dependence on alpha} with respect to $\alpha$. We use here that, 
 because $a_0, b_0\ge 2$, the integral in \eqref{eq:integral finita Beta sin raices} is finite.

For the second summand, $\V(H_N)$, observe first that, since the mean minimizes square deviation, one has  that
 \[
 \V(H_N) = \int_0^1 \big(\E(S_{\alpha,N})-\E(H_N)\big)^2 f_{a_0,b_0}(\alpha)\, d \alpha\le \int_0^1 \big(\E(S_{\alpha,N})-1/\zeta(2)\big)^2 f_{a_0,b_0}(\alpha)\, d \alpha\, ,
 \]
and then integrate with respect to $\alpha$  the estimate \eqref{eq:mean of average time random walk} of Proposition \ref{prop:bounds for mean and var of RW dependence on alpha}, using again that $a_0, b_0\ge 2$.
\end{proof}

The next proposition registers the so called \textit{second moment method} (see, for instance, Section 2.3 in \cite{Du}, and Lemma 2.4 in \cite{CFF}).
\begin{prop}\label{prop:Borel}Let $(W_n)_{n\ge 1} $ be a sequence of uniformly bounded random variables in a certain probability space, and let $U_N$ be the average
\[U_N=\frac{1}{N}(W_1+\dots+W_N)\, , \quad \mbox{for each $N \ge 1$}\,.\]
If\/
$
\lim_{N\to\infty}\E(U_N)=\mu,
$
and if for some $B,\delta>0$,
$\V(U_N)\le B/N^{\delta}$ for each $N\ge 1$, then
\[
\lim_{N\to \infty}U_N=\mu\, , \quad \mbox{almost surely}\,.
\]
\end{prop}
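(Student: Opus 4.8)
The plan is to combine Chebyshev's inequality with the Borel--Cantelli lemma along a suitably sparse subsequence of times, and then to interpolate over the remaining times using the uniform boundedness hypothesis. The obstacle to a direct application of Borel--Cantelli is that the hypothesis $\V(U_N)\le B/N^\delta$ only yields, via Chebyshev, the bound $\P(|U_N-\E(U_N)|>\epsilon)\le B/(\epsilon^2 N^\delta)$, whose sum over $N$ need not converge when $\delta\le 1$; hence one cannot conclude almost sure convergence of the full sequence in one stroke.

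First I would fix $\epsilon>0$ and pass to the polynomial subsequence $N_k=\lceil k^m\rceil$ with an integer exponent $m>1/\delta$ chosen once and for all. Along this subsequence Chebyshev gives
\[
\P(|U_{N_k}-\E(U_{N_k})|>\epsilon)\le \frac{B}{\epsilon^2 N_k^\delta}\le \frac{B}{\epsilon^2 k^{m\delta}},
\]
and since $m\delta>1$ the right-hand side is summable in $k$. The Borel--Cantelli lemma then shows that, almost surely, $|U_{N_k}-\E(U_{N_k})|\le\epsilon$ for all sufficiently large $k$; combined with $\E(U_{N_k})\to\mu$, and letting $\epsilon$ run through $1/j$ for $j\ge 1$, this gives $U_{N_k}\to\mu$ almost surely along the subsequence.

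The main step, and the point where the uniform boundedness $|W_n|\le C$ becomes essential, is to control the oscillation of $U_N$ between consecutive indices $N_k\le N<N_{k+1}$. Writing
\[
U_N-U_{N_k}=\Big(\frac{1}{N}-\frac{1}{N_k}\Big)\sum_{n=1}^{N_k} W_n+\frac{1}{N}\sum_{n=N_k+1}^{N} W_n,
\]
and bounding each partial sum by $C$ times its number of terms, I obtain $|U_N-U_{N_k}|\le 2C(N_{k+1}-N_k)/N_k$. For the choice $N_k=\lceil k^m\rceil$ one has $(N_{k+1}-N_k)/N_k=O(1/k)\to 0$, so this oscillation tends to $0$ uniformly over $N\in[N_k,N_{k+1})$ as $k\to\infty$.

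Finally, for arbitrary $N$ I would pick the unique $k$ with $N_k\le N<N_{k+1}$ and estimate $|U_N-\mu|\le|U_N-U_{N_k}|+|U_{N_k}-\mu|$; both terms tend to $0$ almost surely by the previous two steps, yielding $\lim_{N\to\infty}U_N=\mu$ almost surely. The only delicate point is the gap-filling estimate: the subsequence must be sparse enough that the Chebyshev bounds become summable, yet dense enough that consecutive ratios $N_{k+1}/N_k\to 1$, and the polynomial scale $k^m$ achieves both simultaneously.
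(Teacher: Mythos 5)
Your proof is correct. The paper gives no proof of Proposition \ref{prop:Borel} at all---it merely cites Section 2.3 of \cite{Du} and Lemma 2.4 of \cite{CFF}---and your argument (Chebyshev plus Borel--Cantelli along a polynomial subsequence $N_k=\lceil k^m\rceil$ with $m\delta>1$, followed by gap-filling between consecutive $N_k$ using the uniform bound $|W_n|\le C$) is precisely the standard argument those references employ, with all the delicate points (countably many $\epsilon=1/j$, the oscillation bound $2C(N_{k+1}-N_k)/N_k=O(1/k)$) handled correctly.
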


\begin{proof}
[Proof of Theorem {\rm \ref{teor:asymptotic coprime proportion of polya walks}}]
The second moment method of Proposition  \ref{prop:Borel} combined with the estimates of Corollary \ref{cor:bounds for mean and var of PW} gives the result that $\lim_{N \to \infty} Q_N=1/\zeta(2)$, almost surely, at least in the case when the starting point $(a_0,b_0)$ of the walk satisfies $a_0, b_0\ge 2$.

The remaining case of Theorem \ref{teor:asymptotic coprime proportion of polya walks}, that is, when $a_0=1$ or $b_0=1$, follows from observing that P\'{o}lya's walk $\mathbb{Y}_n=(a_n,b_n)$ starting at $(a_0,1)$ has null probability of remaining \textit{always} at height 1. This is so because 
\[\P\Big(\bigcap_{n=0}^N\{b_n=1\}\Big)=\frac{a_0}{a_0+N}\, ,\]
which tends to 0 as $N \to \infty$.

In particular, for P\'{o}lya's walk starting at $(1,1)$, both the probability that the first~$n$ moves are upwards and the probability that the first $n$ moves are rightwards is ${1}/(n+1)$, for $n \ge 1$. The walk enters the region $\{a \ge2, b\ge 2\}\subset \N^2$ with probability one, although the average time it takes  to do so is infinite. In fact, $\lim_{\to \infty} a_n=+\infty$ and $\lim_{\to \infty} b_n=+\infty$, almost surely.

This completes the proof of Theorem \ref{teor:asymptotic coprime proportion of polya walks}.
\end{proof}

\begin{remark}\label{remark:media polya en caso 1,1} For P\'{o}lya's walk starting at $(a_0,b_0)=(1,1)$, we can give a closed formula for $\E(Q_N)$. By appealing to equation \eqref{eq:mixture of random walks one step from 11} we have that
\[
\begin{aligned}\E(I(\mathbb{Y}_n))&=\frac{1}{n+1} \sum_{j=0}^n \delta_1\big(\gcd(j+1, n+1-j)\big)
\\
&=\frac{1}{n+1} \sum_{j=0}^n \delta_1\big(\gcd(j+1, n+2)\big)=\frac{\phi(n+2)}{n+1}\, , \quad \mbox{for each $n \ge 0$}\, ,
\end{aligned}
\]
where $\phi$ denotes, as usual, Euler's totient function.
And thus,
\[\E(Q_N)=\frac{1}{N}\sum_{n=1}^N \frac{\phi(n+2)}{n+1}\, , \quad \mbox{for each $N \ge 1$}\, .\]
This last expression is easily seen to converge to $1/\zeta(2)$ as $N\to \infty$, by comparing it with
\[\frac{1}{N} \sum_{n=1}^N \frac{\phi(n)}{n}=\frac{1}{N}\sum_{d=1}^N \frac{\mu(d)}{d} \Big\lfloor \frac{N}{d}\Big\rfloor
\xrightarrow{N\to\infty} \sum_{d =1}^\infty  \frac{\mu(d)}{d^2}=\frac{1}{\zeta(2)}\,
.
\]
Here we have used \eqref{eq:Mobius y Euler} and \eqref{eq:sum mu/d2entera}.
\end{remark}

\subsection{Estimates for $\alpha$-random walks: proof of Proposition \ref{prop:bounds for mean and var of RW dependence on alpha}}\label{section:estimates for random walk}

The aim of this section is to prove the estimates of Proposition \ref{prop:bounds for mean and var of RW dependence on alpha} about the $\alpha$-random walks $\mathbb{Z}_{\alpha,n}$.

The proof follows much along the lines of those of Propositions 3.1 and 3.2 in \cite{CFF}.

We first need a bound for binomial probabilities such as the following: for any $\alpha\in(0,1)$, $N\ge 1$ and $0\le k\le N$,
\[
\P(\textsc{bin}(N,\alpha)=k)\le C\, \frac{1}{\sqrt{N\alpha(1-\alpha)}}\,,
\]
for a certain constant $C>0$. This estimate could be obtained by appealing to the local central limit theorem
(see, for instance, Theorem 3.5.2 in \cite{Du}), or by combining the unimodality of the binomial probabilities with Stirling's approximation. For a precise value of $C$, we register the following result.

\begin{lemma}
For any $\alpha\in(0,1)$, $N\ge 1$ and $0\le k\le N$, we have
\begin{equation}
\label{eq:cota uniforme binomial}
\P(\textsc{bin}(N,\alpha)=k)\le \frac{\pi}{2}\, \frac{1}{\sqrt{2\pi N\alpha(1-\alpha)}}\,\cdot
\end{equation}
\end{lemma}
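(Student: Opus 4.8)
The plan is to bound the point probability by the $L^1$-norm of the characteristic function of the binomial, via Fourier inversion on the circle, which has the advantage of producing a bound that is uniform in $k$. Since $\textsc{bin}(N,\alpha)$ is integer-valued with characteristic function $\varphi(t)=(1-\alpha+\alpha e^{it})^N$, the inversion formula reads $\P(\textsc{bin}(N,\alpha)=k)=\frac{1}{2\pi}\int_{-\pi}^{\pi}\varphi(t)\,e^{-ikt}\,dt$, and taking absolute values inside the integral gives, for every $k$,
\[
\P(\textsc{bin}(N,\alpha)=k)\le \frac{1}{2\pi}\int_{-\pi}^{\pi}\big|1-\alpha+\alpha e^{it}\big|^N\,dt\,.
\]
The right-hand side no longer depends on $k$, so it suffices to estimate this single integral.

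Next I would compute the modulus explicitly. A direct expansion yields $\big|1-\alpha+\alpha e^{it}\big|^2=1-4\alpha(1-\alpha)\sin^2(t/2)$, a quantity lying in $[0,1]$. Using the evenness of the integrand and the substitution $u=t/2$, the bound becomes $\frac{2}{\pi}\int_0^{\pi/2}\big(1-4\alpha(1-\alpha)\sin^2 u\big)^{N/2}\,du$. I would then apply two elementary inequalities: first $1-x\le e^{-x}$ (legitimate since the base lies in $[0,1]$), which replaces the integrand by $e^{-2N\alpha(1-\alpha)\sin^2 u}$; and second Jordan's inequality $\sin u\ge 2u/\pi$ on $[0,\pi/2]$, which gives $\sin^2 u\ge 4u^2/\pi^2$. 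After enlarging the domain of integration to $(0,\infty)$, this leaves the Gaussian integral $\frac{2}{\pi}\int_0^{\infty} e^{-8N\alpha(1-\alpha)u^2/\pi^2}\,du$.

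Finally, evaluating $\int_0^\infty e^{-au^2}\,du=\tfrac12\sqrt{\pi/a}$ with $a=8N\alpha(1-\alpha)/\pi^2$ produces exactly $\sqrt{\pi}\big/\sqrt{8N\alpha(1-\alpha)}$, and since $\pi/(2\sqrt{2\pi})=\sqrt{\pi}/\sqrt{8}$ this coincides term for term with the claimed bound $\frac{\pi}{2}\,\frac{1}{\sqrt{2\pi N\alpha(1-\alpha)}}$. I would point out that the factor $\pi/2$ in the statement is precisely the slack introduced by Jordan's inequality, so a finer local estimate would improve the constant. There is no genuinely hard step in this argument; it is a short exact computation, and the only points deserving a word of justification are the Fourier inversion for an integer-valued variable and the nonnegativity of $1-4\alpha(1-\alpha)\sin^2(t/2)$, which is what legitimizes the use of $1-x\le e^{-x}$.
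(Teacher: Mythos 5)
Your proof is correct and follows essentially the same route as the paper's: Fourier inversion of the binomial characteristic function, taking absolute values to eliminate the dependence on $k$, the bound $1-x\le e^{-x}$ combined with the quadratic lower bound on $1-\cos\theta$ (your Jordan inequality $\sin u\ge 2u/\pi$ is the same estimate after the substitution $u=t/2$), and a Gaussian integral extended to the whole line. The only differences are cosmetic: the paper reparametrizes via $t=\alpha/(1-\alpha)$ and works with $\cos\theta-1$ rather than $\sin^2(t/2)$, but the steps correspond one to one and yield the identical constant.
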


\begin{proof}
Write $\alpha=t/(1+t)$, so that $t=\alpha/(1-\alpha)$ and $\alpha(1-\alpha)=t/(1+t)^2$.

We start with the identity
\[
(\dag)\qquad \P(\textsc{bin}(N,\alpha)=k)= \frac{1}{2\pi} \int_{-\pi}^\pi \Big(\frac{1+te^{i\theta}}{1+t}\Big)^N\, e^{-ik\theta}\, d\theta,
\]
that can be verified by expanding, binomial theorem, the factor within the integral. Notice also that $\big(\frac{1+te^{i\theta}}{1+t}\big)^N$ is the characteristic function $\varphi_X(\theta)$ of a random variable $X\sim\textsc{bin}(N,\alpha)$.

Now we bound $(\dag)$ as follows:
\begin{align*}
&\P(\textsc{bin}(N,\alpha)=k)\le \frac{1}{2\pi} \int_{-\pi}^\pi \Big|\frac{1+te^{i\theta}}{1+t}\Big|^N\, d\theta
=\frac{1}{2\pi} \int_{-\pi}^\pi \Big(\Big|\frac{1+te^{i\theta}}{1+t}\Big|^2\Big)^{N/2}\, d\theta
\\
&\qquad =\frac{1}{2\pi} \int_{-\pi}^\pi \Big(1+\frac{2t}{(1+t)^2}\, (\cos \theta-1)\Big)^{N/2}\, d\theta
\stackrel{(\star)}{\le} \frac{1}{2\pi} \int_{-\pi}^\pi \exp\Big(\frac{Nt}{(1+t)^2}\, (\cos \theta-1)\Big)\, d\theta
\\
&\qquad \stackrel{(\star\star)}{\le}  \frac{1}{2\pi} \int_{-\pi}^\pi \exp\Big(-\frac{2}{\pi^2} \frac{Nt}{(1+t)^2}\, \theta^2\Big)\, d\theta
=\frac{1}{2\pi} \int_{-\pi}^\pi \exp\Big(-\frac{2N}{\pi^2} \alpha(1-\alpha)\, \theta^2\Big)\, d\theta
\\
&\qquad \le \frac{1}{2\pi} \int_{-\infty}^\infty \exp\Big(-\frac{2N}{\pi^2} \alpha(1-\alpha)\, \theta^2\Big)\, d\theta
=\frac{\pi}{2}\, \frac{1}{\sqrt{2\pi N\alpha(1-\alpha)}}\,\cdot
\end{align*}
In $(\star)$, we have used $1+x\le e^x$ for $x\in\mathbb{R}$, and the bound $\cos \theta-1\le -2\theta^2/\pi^2$, valid for $|\theta|\le \pi$, was employed for $(\star\star)$.
\end{proof}

By the way, inequality \eqref{eq:cota uniforme binomial} is not true substituting $\pi/2$ by 1.

\smallskip

Next, we need an estimate for sums of binomial probabilities restricted to indices in a certain residue class.
\begin{lemma}\label{lemma:binomial probabilities residue class} There is an absolute constant $C>0$ such that for any $\alpha\in(0,1)$ and for integers $n\ge 1$, $d\ge 1$, and $r\in\Z$, there holds
\begin{equation}\label{eq:binomial probabilities residue class}
\Big|\sum_{\substack{0\le l\le n;\\l\equiv r{ \ \text{\rm mod}\, } d}}\binom nl \alpha^l(1-\alpha)^{n-l}-\frac1d\Big|\le \frac{C}{\sqrt{\alpha(1-\alpha)}\, \sqrt{n}}.
\end{equation}
\end{lemma}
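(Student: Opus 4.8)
The plan is to compute the restricted sum exactly by a roots-of-unity filter, peel off the constant main term, and then control the error by comparing a Gaussian-type sum to an integral.

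First I would set $\omega = e^{2\pi i/d}$ and use the orthogonality relation $\mathbf{1}[l\equiv r\bmod d] = \frac1d\sum_{j=0}^{d-1}\omega^{j(l-r)}$ to rewrite the left-hand side. Interchanging the order of summation and applying the binomial theorem to the inner sum over $l$, each frequency $j$ contributes $\frac1d\,\omega^{-jr}(1-\alpha+\alpha\omega^j)^n$. The frequency $j=0$ gives exactly $\frac1d$, which cancels the $1/d$ appearing in the statement, so the quantity to estimate is $\frac1d\sum_{j=1}^{d-1}\omega^{-jr}(1-\alpha+\alpha\omega^j)^n$. By the triangle inequality it then suffices to bound $\frac1d\sum_{j=1}^{d-1}|1-\alpha+\alpha\omega^j|^n$.

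Next I would compute, writing $\theta_j = 2\pi j/d$, the exact identity $|1-\alpha+\alpha\omega^j|^2 = 1 - 2\alpha(1-\alpha)(1-\cos\theta_j)$, and invoke $1+x\le e^x$ to obtain $|1-\alpha+\alpha\omega^j|^n \le \exp\big(-n\alpha(1-\alpha)(1-\cos\theta_j)\big)$. The decisive trigonometric input is the lower bound $1-\cos\theta \ge 2\theta^2/\pi^2$ for $|\theta|\le\pi$, the very inequality already used at $(\star\star)$ above. Since $1-\cos$ is even and $2\pi$-periodic, I would apply it to the representative $2\pi j'/d$ with $j' = \min(j,d-j)\le d/2$, which yields $1-\cos\theta_j \ge 8(j')^2/d^2$. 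Pairing $j$ with $d-j$, which share the same modulus, reduces the sum to $\frac2d\sum_{m\ge 1}\exp\big(-8n\alpha(1-\alpha)m^2/d^2\big)$.

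Finally I would dominate this series by the Gaussian integral $\int_0^\infty \exp\big(-8n\alpha(1-\alpha)x^2/d^2\big)\,dx$, whose value is a constant multiple of $d/\sqrt{n\alpha(1-\alpha)}$; the factor $d$ cancels the $\frac1d$ in front, leaving exactly the claimed bound $C/(\sqrt{\alpha(1-\alpha)}\,\sqrt n)$ with an absolute constant (one checks that $C=\sqrt{\pi/8}$ works, independent of $d$, $n$, $r$ and $\alpha$). The main obstacle is the step above: one must make the lower bound on $1-\cos\theta_j$ uniform over all $j$, including frequencies $\theta_j$ near $2\pi$, and this is precisely what the reduction to $j'=\min(j,d-j)$ together with the symmetry pairing accomplishes, guaranteeing that the constant $C$ does not grow with $d$.
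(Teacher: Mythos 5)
Your proof is correct, and all the steps check out: the roots-of-unity filter identity, the exact computation $|1-\alpha+\alpha\omega^j|^2=1-2\alpha(1-\alpha)(1-\cos\theta_j)$, the passage to $j'=\min(j,d-j)$ so that the bound $1-\cos\theta\ge 2\theta^2/\pi^2$ applies on $|\theta|\le\pi$, and the domination of the resulting sum by a Gaussian integral, yielding the explicit absolute constant $C=\sqrt{\pi/8}$. Your route is genuinely different from what is written in the paper: there, the main case $d\le n$ is not argued at all but deferred to the proof of Lemma 2.1 of \cite{CFF} (with the $\alpha$-dependence tracked), and the paper's only self-contained content is the complementary case $d>n$, handled by observing that the progression then meets $\{0,\dots,n\}$ in at most two points, each of probability at most the pointwise bound \eqref{eq:cota uniforme binomial}, while $1/d\le 1/\sqrt{\alpha(1-\alpha)n}$. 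Your argument needs no such case split, since the discrete Fourier computation is valid uniformly for every $d\ge 1$, including $d>n$; it is also self-contained and produces a concrete constant, which the paper's citation-based proof does not. It is worth noticing that your toolkit --- the inequality $1+x\le e^x$, the bound $1-\cos\theta\ge 2\theta^2/\pi^2$, and comparison of a Gaussian sum with a Gaussian integral --- is precisely the toolkit the paper uses to prove \eqref{eq:cota uniforme binomial}: your proof is the discrete-frequency analogue of that continuous-frequency (characteristic function) argument, applied directly to the progression sum, and in this sense it subsumes both halves of the paper's proof in a single stroke.
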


We may restrict $r$ to $r \in \{0,1, \ldots, d-1\}$ or, for that matter, to any complete set of residues mod $d$, with no loss of generality.

For $r \in \{0,1, \ldots, d-1\}$,  we denote by $A_r=\{l\in \{0,\ldots, n\}: l\equiv r{ \ \text{\rm mod}\, } d\}$. There are $d$ of these \textit{classes}. Lemma \ref{lemma:binomial probabilities residue class} means that
$
\P (\textsc{bin}(n, \alpha)\in A_r )$ is approximately $1/d$, quite uniformly.

This lemma is stated  in \cite{CFF} with an unspecified constant $C_\alpha$ instead of $C/\sqrt{\alpha(1-\alpha)}$.

\begin{proof} 
We may assume $r\in\{0,\ldots, d-1\}$. If we assume further that $d\le n$, then the proof is as that of Lemma 2.1 in \cite{CFF}.

To remove the assumption $d \le n$, observe that for $d>n$ and any $r\in \Z$,  there are at most two values of $l$ in $\{0, \ldots, n\}$, congruent to $r$ mod $d$. (Actually,  at most 1 such value, except in the extreme case $r=0$, and $d=n$, where there are 2.) Thus the sum of probabilities in \eqref{eq:binomial probabilities residue class} is, by
\eqref{eq:cota uniforme binomial}, at most
$C  /{\sqrt{\alpha(1-\alpha)n}}$, where $C$ is an absolute constant
while
\[
\frac{1}{d}\le \frac{1}{n}\le \frac{2}{\sqrt{n}}\le \frac{1}{\sqrt{\alpha(1-\alpha)n}}\,\cdot\qedhere
\]
	\end{proof}

From Lemma \ref{lemma:binomial probabilities residue class}, we deduce the following estimate of some further restricted binomial sums.
This estimate is the key ingredient of our  proof of Proposition \ref{prop:bounds for mean and var of RW dependence on alpha}
and thus of Theorem \ref{teor:asymptotic coprime proportion of polya walks}.
\begin{lemma}
\label{lema:imp}
Let $\alpha\in(0,1)$. For any  integers  $s,t\ge 0$, we have that, as $M\to\infty$,
\begin{align*}
&\sum_{\substack{0\le l\le M\\ \gcd (l+s,M+t)=1}}\binom Ml\alpha^l(1-\alpha)^{M-l}
=\sum_{d\mid M+t}\frac{\mu(d)}d+\frac{1}{\sqrt{\alpha(1-\alpha)}}\, O\Big(\frac{\tau(M+t)}{\sqrt{M}}\Big)\,.
\end{align*}
\end{lemma}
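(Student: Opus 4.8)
The plan is to reduce the coprimality constraint to a sum over divisors by M\"obius inversion, and then to apply the residue-class estimate of Lemma~\ref{lemma:binomial probabilities residue class} termwise. First I would rewrite the indicator of the event $\{\gcd(l+s,M+t)=1\}$ using the representation \eqref{eq:representation of indicator of gcd} with $k=1$: for each $l$,
\[
\mathbf{1}\big[\gcd(l+s,M+t)=1\big]=\sum_{\substack{d\mid l+s,\, d\mid M+t}}\mu(d).
\]
Substituting this into the sum and interchanging the (finite) orders of summation, the outer sum ranges over the divisors $d$ of $M+t$ and the inner sum collects those $l$ with $0\le l\le M$ and $d\mid l+s$, i.e.\ $l\equiv -s \bmod d$:
\begin{equation*}
\sum_{\substack{0\le l\le M\\ \gcd(l+s,M+t)=1}}\binom Ml\alpha^l(1-\alpha)^{M-l}=\sum_{d\mid M+t}\mu(d)\sum_{\substack{0\le l\le M\\ l\equiv -s \bmod d}}\binom Ml\alpha^l(1-\alpha)^{M-l}.
\end{equation*}

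Next I would apply Lemma~\ref{lemma:binomial probabilities residue class} with $n=M$ and $r\equiv -s \bmod d$ to each inner sum. This replaces the inner sum by $1/d$ with an error of absolute value at most $C/(\sqrt{\alpha(1-\alpha)}\,\sqrt{M})$, \emph{uniformly} in $d$ and in the residue $r$. Multiplying by $\mu(d)$ and summing over $d\mid M+t$, the main terms assemble exactly into $\sum_{d\mid M+t}\mu(d)/d$, as required.

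Finally I would bound the accumulated error. Using $|\mu(d)|\le 1$ and the uniform per-term bound, the total error is at most the number of divisors of $M+t$ times that bound, namely
\[
\frac{1}{\sqrt{\alpha(1-\alpha)}}\,O\Big(\frac{1}{\sqrt{M}}\Big)\sum_{d\mid M+t}1=\frac{1}{\sqrt{\alpha(1-\alpha)}}\,O\Big(\frac{\tau(M+t)}{\sqrt{M}}\Big),
\]
which is precisely the claimed error term. Since each step is a direct substitution, I do not anticipate a genuine obstacle; the only point that must be handled with care is that the error in Lemma~\ref{lemma:binomial probabilities residue class} is uniform in both $d$ and $r$, so that it is the divisor count $\tau(M+t)$—and nothing larger—that governs the accumulated error, and so that the factor $1/\sqrt{\alpha(1-\alpha)}$ can be pulled out front cleanly. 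This uniformity is exactly the feature that the explicit $\alpha$-dependence of Lemma~\ref{lemma:binomial probabilities residue class} was arranged to provide.
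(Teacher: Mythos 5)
Your proposal is correct and follows exactly the paper's own argument: Möbius inversion via \eqref{eq:representation of indicator of gcd} with $k=1$, interchange of summation to reduce to residue classes $l\equiv -s \bmod d$ for $d\mid M+t$, termwise application of Lemma~\ref{lemma:binomial probabilities residue class}, and accumulation of the uniform error over the $\tau(M+t)$ divisors. Nothing further is needed.
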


The implied constant of the big-O of Lemma \ref{lema:imp} is absolute.

The proof of Lemma~\ref{lema:imp} is similar to that of Lemma 2.2 in \cite{CFF}. It results from combining  Lemma \ref{lemma:binomial probabilities residue class} and \eqref{eq:representation of indicator of gcd} with $k=1$ and rearranging terms. We include it for completeness.

\begin{proof}
We have that \[\delta_1(\gcd (l+s,M+t))=\sum_{d\mid \gcd (l+s,M+t)}\mu(d).\]
Thus,
\begin{align*}
&\sum_{0\le l\le M}\binom Ml\alpha^l(1-\alpha)^{n-l}\delta_1(\gcd (l+s,M+t))
=\sum_{d\mid M+t}\mu(d)
\sum_{\substack{0\le l\le M\\l\equiv -s \ \text{(mod $d$)}}}\binom Ml\alpha^l(1-\alpha)^{M-l}
\\
&\quad =\sum_{d\mid M+t}\mu(d)\Big(\frac 1d+\frac{1}{\sqrt{\alpha(1-\alpha)}}\,O\Big(\frac 1{\sqrt{ M}}\Big)  \Big)
=\sum_{d\mid M+t}\frac{\mu(d)}d+ \frac{1}{\sqrt{\alpha(1-\alpha)}}\,O\Big(\frac{\tau(M+t)}{\sqrt{M}}\Big)
\end{align*}
as $M\to\infty$. Lemma \ref{lemma:binomial probabilities residue class} justifies the second equality sign.\end{proof}

\smallskip
We write down now estimates for the means $\E\big(I(\mathbb{Z}_{\alpha,n})\big)$ and for $\E\big(I(\mathbb{Z}_{\alpha,n})I(\mathbb{Z}_{\alpha,m})\big)$ for the $\alpha$-random walk $\mathbb{Z}_{\alpha,n}$ starting at $(a_0,b_0)$.

\begin{lemma} \label{lemma:mean variance Zan}We have
\begin{equation}
\label{eq:mean of Xi}
\E\big(I(\mathbb{Z}_{\alpha,n})\big)
=\sum_{d\mid n+a_0+b_0}\frac{\mu(d)}d+\frac{1}{\sqrt{\alpha(1-\alpha)}}\, O\Big (\frac{1}{n^{1/4}}\Big )\, , \quad\text{as $n\to\infty$,}
\end{equation}
and, for $n<m$,
\begin{align}\nonumber
\E\big(I(\mathbb{Z}_{\alpha,n})\cdot I(\mathbb{Z}_{\alpha,m})\big)
&=\sum_{d\mid n+a_0+b_0}\frac{\mu(d)}d\sum_{d\mid m+a_0+b_0}\frac{\mu(d)}d
\\ \label{E(XiXj)}
&\quad +\frac{1}{\sqrt{\alpha(1-\alpha)}}\,O\Big (\frac{1}{n^{1/4}}\Big )+\frac{1}{\alpha(1-\alpha)}\,O\Big (\frac{m^{1/4}}{\sqrt{m-n}}\Big )
\end{align}
as $n\to\infty$.
\end{lemma}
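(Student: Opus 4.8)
The plan is to reduce both estimates directly to the binomial-sum lemmas already in hand. For the mean, recall that the number of right-steps of the $\alpha$-random walk up to time $n$ is distributed as $\textsc{bin}(n,\alpha)$, so $\mathbb{Z}_{\alpha,n}=(a_0+l,\,b_0+n-l)$ on the event that this count equals $l$; and since $\gcd(a,b)=\gcd(a,a+b)$, we have $\gcd(a_0+l,\,b_0+n-l)=\gcd(a_0+l,\,n+a_0+b_0)$. Hence
\[
\E\big(I(\mathbb{Z}_{\alpha,n})\big)=\sum_{0\le l\le n}\binom nl\alpha^l(1-\alpha)^{n-l}\,\delta_1\big(\gcd(a_0+l,\,n+a_0+b_0)\big),
\]
which is exactly the left-hand side of Lemma \ref{lema:imp} with $s=a_0$, $t=a_0+b_0$ and $M=n$. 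I would thus read off \eqref{eq:mean of Xi} immediately, absorbing the divisor factor by the bound $\tau(n+a_0+b_0)=O(n^{1/4})$ from \eqref{eq:order of tau}, so that $\tau(n+a_0+b_0)/\sqrt n=O(n^{-1/4})$.

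For the product \eqref{E(XiXj)} I would use the independence of the increments to split the trajectory at time $n$. Writing the right-steps up to time $n$ as $l$ and the right-steps strictly after $n$ up to $m$ as $q\sim\textsc{bin}(m-n,\alpha)$ (independent of $l$), we get $\mathbb{Z}_{\alpha,m}=(a_0+l+q,\,b_0+m-l-q)$, and the same gcd-reduction gives $I(\mathbb{Z}_{\alpha,m})=\delta_1\big(\gcd(a_0+l+q,\,m+a_0+b_0)\big)$. Therefore
\[
\E\big(I(\mathbb{Z}_{\alpha,n})\,I(\mathbb{Z}_{\alpha,m})\big)=\sum_{0\le l\le n}\binom nl\alpha^l(1-\alpha)^{n-l}\,\delta_1\big(\gcd(a_0+l,\,n+a_0+b_0)\big)\,G(l),
\]
where $G(l)=\sum_{0\le q\le m-n}\binom{m-n}q\alpha^q(1-\alpha)^{m-n-q}\,\delta_1\big(\gcd(a_0+l+q,\,m+a_0+b_0)\big)$.

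The key step is to evaluate $G(l)$ \emph{uniformly in $l$}. Expanding the indicator via \eqref{eq:representation of indicator of gcd} with $k=1$, the sum over $q$ becomes $\sum_{d\mid m+a_0+b_0}\mu(d)$ times a binomial sum restricted to the residue class $q\equiv -a_0-l\pmod d$, to which Lemma \ref{lemma:binomial probabilities residue class} applies; since the error there is uniform in the residue, this yields, uniformly in $l$,
\[
G(l)=\sum_{d\mid m+a_0+b_0}\frac{\mu(d)}d+\frac1{\sqrt{\alpha(1-\alpha)}}\,O\Big(\frac{\tau(m+a_0+b_0)}{\sqrt{m-n}}\Big).
\]
Substituting back, the main term $T_m\triangleq\sum_{d\mid m+a_0+b_0}\mu(d)/d$ factors out of the $l$-sum, producing $T_m\cdot\E\big(I(\mathbb{Z}_{\alpha,n})\big)$; by the already-proved \eqref{eq:mean of Xi} together with $0\le T_m=\phi(m+a_0+b_0)/(m+a_0+b_0)\le1$ (from \eqref{eq:Mobius y Euler}) this contributes the product main term plus an error $\tfrac1{\sqrt{\alpha(1-\alpha)}}O(n^{-1/4})$. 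The uniform error in $G(l)$, bounded against $\sum_l\binom nl\alpha^l(1-\alpha)^{n-l}\le1$ and with $\tau(m+a_0+b_0)=O(m^{1/4})$, contributes $\tfrac1{\sqrt{\alpha(1-\alpha)}}O(m^{1/4}/\sqrt{m-n})$. Since $\tfrac1{\sqrt{\alpha(1-\alpha)}}\le\tfrac1{\alpha(1-\alpha)}$, this is in fact slightly stronger than the asserted \eqref{E(XiXj)}.

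The main obstacle, and the point deserving care, is precisely the uniformity in $l$ of the estimate for $G(l)$: it is what lets the $l$-sum collapse against the already-established mean \eqref{eq:mean of Xi}, rather than generating a product of two independent error terms. That uniformity is exactly the uniform-in-residue formulation of Lemma \ref{lemma:binomial probabilities residue class}. Everything else is bookkeeping: tracking the single versus double powers of $1/\sqrt{\alpha(1-\alpha)}$, and converting the divisor factors $\tau(\cdot)$ into the advertised powers $n^{-1/4}$ and $m^{1/4}$ through \eqref{eq:order of tau}.
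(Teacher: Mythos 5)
Your argument is correct and is essentially the paper's own proof: the mean is read off from Lemma~\ref{lema:imp} with $M=n$, $s=a_0$, $t=a_0+b_0$, and the second moment is handled by splitting the trajectory at time $n$, using independence of increments, and applying the same uniform-in-residue estimate (Lemma~\ref{lemma:binomial probabilities residue class} via \eqref{eq:representation of indicator of gcd}, i.e.\ Lemma~\ref{lema:imp} again with $M=m-n$, $s=a_0+l$, $t=a_0+b_0+n$) to the inner sum. The only difference is bookkeeping at the end: the paper multiplies out two bracketed estimates, and the cross product of the two error terms is what produces the $1/(\alpha(1-\alpha))$ factor in \eqref{E(XiXj)}, whereas you sum the uniform error of $G(l)$ directly against the binomial weights (bounded by $1$) and so obtain a marginally sharper bound with only $1/\sqrt{\alpha(1-\alpha)}$, which of course implies the stated estimate.
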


The implied constants of the different big-O above do not depend upon $\alpha$.

The proof of Lemma \ref{lemma:mean variance Zan} is essentially the same as the proof of Lemma 2.6 in \cite{CFF}, except that we keep the explicit dependence on $\alpha$ in the error terms, that the initial point $(a_0,b_0)$ has to be taken into account, and that we have simplified the big-O's using, see \eqref{eq:order of tau}, that  
$\tau(n)=O(n^{1/4})$ as $n\to\infty$. We include it for completeness.
%
%

\begin{proof}[Proof of Lemma {\rm \ref{lemma:mean variance Zan}}]
Let $\mathbb{Z}_{\alpha,n}=(a_n,b_n)$ be the position at time $n$ of  an $\alpha$-random walk starting at $(a_0,b_0)$. Observe that $a_n+b_n=a_0+b_0+n$, so we can write $\mathbb{Z}_{\alpha,n}=(a_0,b_0)+(l, n-l)$ for some $l=0,1,\dots,n$. The probability that $\mathbb{Z}_{\alpha,n}$ equals $(a_0+l,b_0+n-l)$ is
\[
\binom nl\alpha^l(1-\alpha)^{n-l}.
\]
Notice that $\gcd(a_0+l,b_0+n-l)=\gcd(a_0+l,a_0+b_0+n)$. Thus,
\[
\E\big(I(\mathbb{Z}_{\alpha,n})\big)=\sum_{\substack{0\le l\le n\\ \gcd(a_0+l,a_0+b_0+n)=1}}\binom nl \, \alpha^l\,(1-\alpha)^{n-l}\, ,
\]
and Lemma \ref{lema:imp}, with $M=n$, $s=a_0$ and $t=a_0+b_0$, combined with the estimate \eqref{eq:order of tau} gives \eqref{eq:mean of Xi}.

\smallskip

Take now two positions, say $\mathbb{Z}_{\alpha,n}$ and $\mathbb{Z}_{\alpha,m}$, with $n<m$, of the $\alpha$-random walk starting at the point $(a_0,b_0)$. The coordinates of these two positions will be $\mathbb{Z}_{\alpha,n}=(a_0,b_0)+(l,n-l)$ and $\mathbb{Z}_{\alpha,n}=(a_0,b_0)+(l,n-l)+(r,m-n-r)$ for some $0\le l\le n$ and $0\le r\le m-n$, with probability
\[
\binom nl\alpha^l(1-\alpha)^{n-l}\binom{m-n}{r}\alpha^{r}(1-\alpha)^{m-n-r}.
\]
Note that $\gcd(a_0+l+r,b_0+m-l-r)=\gcd(a_0+l+r,a_0+b_0+m)$. Then,
\begin{align*}
&\E\big(I(\mathbb{Z}_{\alpha,n})I(\mathbb{Z}_{\alpha,m})\big)
\\
&=\sum_{\substack{0\le l\le n\\ \gcd(a_0+l,a_0+b_0+n)=1}}\binom nl\alpha^l(1-\alpha)^{n-l}
\sum_{\substack{0\le r\le m-n\\ \gcd(a_0+l+r,a_0+b_0+m)=1}}\binom{m-n}{r}\alpha^{r}(1-\alpha)^{m-n-r} .
\end{align*}
Now, using Lemma \ref{lema:imp} with $M=m-n$, $s=a_0+l$ and $t=a_0+b_0+n$, and so that $M+t=a_0+b_0+m$, in the inner sum, and then with $M=n$, $s=a_0$ and $t=a_0+b_0$ in the first sum,
combined with the estimate \eqref{eq:order of tau}, we get
\begin{align*}\nonumber
&\E\big(I(\mathbb{Z}_{\alpha,n})\cdot I(\mathbb{Z}_{\alpha,m})\big)
\\
&=\Big( \sum_{d\mid n+a_0+b_0}\!\frac{\mu(d)}d+  \frac{1}{\sqrt{\alpha(1-\alpha)}}\, O\Big(\frac{1}{n^{1/4}}\Big)    \Big)\! \Big( \sum_{d\mid m+a_0+b_0}\!\frac{\mu(d)}d+ \frac{1}{\sqrt{\alpha(1-\alpha)}}\,O\Big (\frac{m^{1/4}}{\sqrt{m-n}}\Big ) \Big)
\\
&=\sum_{d\mid n+a_0+b_0}\frac{\mu(d)}d\sum_{d\mid m+a_0+b_0}\frac{\mu(d)}d+ \frac{1}{\sqrt{\alpha(1-\alpha)}}\,O\Big(\frac{1}{n^{1/4}}\Big)
\\
&\quad +\frac{1}{\sqrt{\alpha(1-\alpha)}}\, O\Big (\frac{m^{1/4}}{\sqrt{m-n}}\Big )
+ \frac{1}{{\alpha(1-\alpha)}}\,O\Big (\frac{m^{1/4}}{n^{1/4}\, \sqrt{(m-n)}}\Big ) 
.
\end{align*}
Here, we have used \eqref{eq:Mobius y Euler}, and the fact that $\phi(k)/k< 1$. 
 Finally, observe that the last two terms can be written together as
\[
\frac{1}{{\alpha(1-\alpha)}}\, O\Big (\frac{m^{1/4}}{\sqrt{m-n}}\Big ).\qedhere
\]
\end{proof}

\begin{proof}[Proof of Proposition {\rm\ref{prop:bounds for mean and var of RW dependence on alpha}}]
Now, Proposition \ref{prop:bounds for mean and var of RW dependence on alpha} follows from Lemma~\ref{lemma:mean variance Zan} with an argument much akin to that proving Propositions 3.1 and 3.2 in \cite{CFF}.
\end{proof}

\begin{proof}[Proof of Theorem {\rm\ref{teor:basic asymptotic of CFF}}]
Theorem \ref{teor:basic asymptotic of CFF}, with no restriction on the departing point $(a_0,b_0)$, follows from Propositions \ref{prop:bounds for mean and var of RW dependence on alpha} and~\ref{prop:Borel}.
\end{proof}

\section{Asymptotic $k$-visibility of random walks and P\'{o}lya's walks}\label{section:gcd=k}

For $\alpha \in (0,1)$ and $k \ge 1$, and a given initial position $(a_0,b_0)$, we now consider the average time $S^{(k)}_{\alpha, N}$, up to time $N$, that the $\alpha$-random walk $\mathbb{Z}_{\alpha, n}$ is $k$-visible from the origin:
\[S^{(k)}_{\alpha, N}=\frac{1}{N} \sum_{n=1}^{N} I_k(\mathbb{Z}_{\alpha, n})\, , \quad \mbox{for $N \ge 1$}\, .\]

For $k \ge 1$, and a given initial position $(a_0,b_0)$, we consider also the average time that P\'{o}lya's walk $\mathbb{Y}_n$ is $k$-visible from the origin:
\[
Q^{(k)}_{N}=\frac{1}{N} \sum_{n=1}^{N} I_k(\mathbb{Y}_{n})\, , \quad \mbox{for $N \ge 1$}\, .
\]

The analysis for visibility ($k=1$) which has been carried out in the previous sections may be extended to $k$-visibility.  The key new ingredient is the following extension of Lemma \ref{lema:imp}.

\begin{lemma}
\label{lema:imp for k}
Let $\alpha\in(0,1)$ and $k\ge 1$. For any  integers $s,t\ge 0$, we have that, as $M\to\infty$,
\begin{align*}
\sum_{\substack{0\le l\le M\\ \gcd (l+s,M+t)=k}}\binom Ml\alpha^l(1-\alpha)^{M-l}
&=\frac{g_k(M+t)}{k}\sum_{kd\mid M+t}\frac{\mu(d)}d
\\
&\quad +\frac{g_k(M+t)}{\sqrt{\alpha(1-\alpha)}}\, O\Big(\frac{\tau((M+t)/k)}{\sqrt{M}}\Big)\, .
\end{align*}
\end{lemma}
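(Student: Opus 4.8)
The plan is to reduce the $k$-visibility count to a $k=1$ count by a rescaling argument, and then invoke Lemma~\ref{lema:imp} directly. The starting observation is the arithmetic fact recorded earlier, namely that $\gcd(l+s,M+t)=k$ forces $k$ to divide $M+t$; this is exactly what produces the indicator $g_k(M+t)$ in front of both terms on the right-hand side. So I would first dispose of the trivial case: if $k\nmid M+t$, the left-hand sum is empty (no $l$ can satisfy $\gcd(l+s,M+t)=k$), and $g_k(M+t)=0$ makes the right-hand side vanish too, so the identity holds vacuously. From now on I assume $k\mid M+t$.

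Next, under the assumption $k\mid M+t$, I would express the condition $\gcd(l+s,M+t)=k$ using the representation \eqref{eq:representation of indicator of gcd}, which reads $\delta_k(\gcd(l+s,M+t))=\sum_{kd\mid l+s,\; kd\mid M+t}\mu(d)$. Interchanging the order of summation then gives
\[
\sum_{\substack{0\le l\le M\\ \gcd(l+s,M+t)=k}}\binom Ml\alpha^l(1-\alpha)^{M-l}
=\sum_{kd\mid M+t}\mu(d)\sum_{\substack{0\le l\le M\\ l\equiv -s\;(\mathrm{mod}\;kd)}}\binom Ml\alpha^l(1-\alpha)^{M-l}\,.
\]
This is structurally identical to the first step in the proof of Lemma~\ref{lema:imp}, but with the modulus $d$ there replaced by $kd$ and with the outer divisor sum running over those $d$ with $kd\mid M+t$.

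The heart of the matter is now to apply Lemma~\ref{lemma:binomial probabilities residue class} to each inner residue-class sum, with modulus $kd$ and residue $-s$. That lemma gives $\frac{1}{kd}+\frac{1}{\sqrt{\alpha(1-\alpha)}}O(1/\sqrt M)$ for each inner sum. Substituting and separating the main term from the error, the main term becomes $\sum_{kd\mid M+t}\mu(d)/(kd)=\frac1k\sum_{kd\mid M+t}\mu(d)/d$, which matches the claimed main term once the factor $g_k(M+t)/k$ is read as recording both the divisibility constraint $k\mid M+t$ and the $1/k$. The error term is $\frac{1}{\sqrt{\alpha(1-\alpha)}}O(1/\sqrt M)$ multiplied by the number of terms in the outer sum, and this is the one place requiring care: the outer sum runs over $d$ with $kd\mid M+t$, equivalently over divisors $d$ of $(M+t)/k$, so there are exactly $\tau((M+t)/k)$ of them. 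This is precisely why the error in the statement carries $\tau((M+t)/k)$ rather than $\tau(M+t)$, and it is the main point where my proof diverges from the $k=1$ case.

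The only genuine obstacle I anticipate is bookkeeping the $g_k(M+t)$ factor cleanly so that the vacuous case and the substantive case are presented uniformly, and making sure the counting of divisors in the error term is attributed correctly to $(M+t)/k$; the analytic input (the uniform binomial bound and the residue-class estimate) is entirely borrowed from Lemmas~\ref{lemma:binomial probabilities residue class} and~\ref{lema:imp}, and the absoluteness of the implied constant is inherited from there, so no new estimation work is needed. I would close by noting, as in Lemma~\ref{lema:imp}, that the result follows by combining the residue-class estimate with the interchange of summation and collecting terms.
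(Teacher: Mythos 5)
Your proof is correct and takes essentially the same route as the paper's, which likewise establishes Lemma \ref{lema:imp for k} by combining the representation \eqref{eq:representation of indicator of gcd} for general $k$ with Lemma \ref{lemma:binomial probabilities residue class} applied to the moduli $kd$, exactly as in the proof of Lemma \ref{lema:imp}. Your treatment of the vacuous case $k \nmid M+t$ via the factor $g_k(M+t)$ and your count of $\tau((M+t)/k)$ terms in the outer divisor sum (since $kd \mid M+t$ is equivalent to $d \mid (M+t)/k$) are both accurate.
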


Recall, from Section \ref{subseccion:number-theoretical}, $g_k$ denotes the arithmetic function given by $g_k(n)=1$ if~$n$ is a multiple of $k$, and $g_k(n)=0$ otherwise.
The proof of Lemma \ref{lema:imp for k}, like that of Lemma~\ref{lema:imp}, results from combining \eqref{eq:representation of indicator of gcd}, now for general $k \ge 1$, and Lemma \ref{lemma:binomial probabilities residue class}.

For the proportion  of $k$-visibility $S^{(k)}_{\alpha,N}$, we have the following extension of Proposition~\ref{prop:bounds for mean and var of RW dependence on alpha}.

\begin{prop}\label{prop:bounds for mean and var of RW dependence on alpha with k} For each $\alpha \in  (0,1)$ and $k \ge 1$, the $\alpha$-random walk $(\mathbb{Z}_{\alpha,n})_{n\ge 0}$ with any given initial position $(a_0,b_0)$ satisfies the estimates
\begin{align*}
\E(S^{(k)}_{\alpha,N})&=\frac{1}{k^2\zeta(2)}+\frac{1}{\sqrt{\alpha(1-\alpha)}}\,O\Big( \frac{1} {N^{1/4}}\Big)\, , \quad \mbox{as $N \to \infty$}\,,\\ \intertext{and}
\V(S^{(k)}_{\alpha,N})&=\frac{1}{\alpha(1-\alpha)}\, O\Big( \frac{1} {N^{1/4}}\Big)\,,\quad \mbox{as $N \to \infty$}\,.
\end{align*}
\end{prop}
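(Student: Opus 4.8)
\emph{Proof proposal.} The plan is to mirror the treatment of the case $k=1$ in Section~\ref{section:proof of main}, replacing Lemma~\ref{lema:imp} by its $k$-analog Lemma~\ref{lema:imp for k} throughout. The first step is to record the $k$-version of Lemma~\ref{lemma:mean variance Zan}, namely the one- and two-point estimates for $I_k$. Writing $\mathbb{Z}_{\alpha,n}=(a_0,b_0)+(l,n-l)$ and using $\gcd(a_0+l,b_0+n-l)=\gcd(a_0+l,a_0+b_0+n)$, one has
\[
\E\big(I_k(\mathbb{Z}_{\alpha,n})\big)=\sum_{\substack{0\le l\le n\\\gcd(a_0+l,a_0+b_0+n)=k}}\binom nl\alpha^l(1-\alpha)^{n-l},
\]
to which Lemma~\ref{lema:imp for k} with $M=n$, $s=a_0$, $t=a_0+b_0$ applies directly, yielding a main term $\frac{g_k(n+a_0+b_0)}{k}\sum_{kd\mid n+a_0+b_0}\frac{\mu(d)}d$ and an error $\frac{1}{\sqrt{\alpha(1-\alpha)}}O(n^{-1/4})$ after invoking $\tau(n)=O(n^{1/4})$ from \eqref{eq:order of tau}. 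For the two-point function with $n<m$, I would split over the intermediate position exactly as in the proof of Lemma~\ref{lemma:mean variance Zan}, apply Lemma~\ref{lema:imp for k} first to the inner sum (with $M=m-n$, so that $M+t=a_0+b_0+m$ and the inner main term is \emph{independent} of the outer index $l$) and then to the outer sum, reaching an estimate of the same shape as \eqref{E(XiXj)}: the product of the two main terms plus errors $\frac{1}{\sqrt{\alpha(1-\alpha)}}O(n^{-1/4})$ and $\frac{1}{\alpha(1-\alpha)}O\big(m^{1/4}/\sqrt{m-n}\big)$.

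For the mean, I would average the one-point estimate over $1\le n\le N$. The error averages to $\frac{1}{\sqrt{\alpha(1-\alpha)}}\,\frac1N\sum_{n\le N}O(n^{-1/4})=\frac{1}{\sqrt{\alpha(1-\alpha)}}\,O(N^{-1/4})$. For the main term, only the indices $n$ with $k\mid n+a_0+b_0$ contribute; writing such $n+a_0+b_0=km$, the M\"obius--Euler identity \eqref{eq:Mobius y Euler} gives $\sum_{kd\mid km}\frac{\mu(d)}d=\sum_{d\mid m}\frac{\mu(d)}d=\frac{\phi(m)}m$, so the main term equals $\frac1k\frac{\phi(m)}m$. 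As $n$ runs over $\{1,\dots,N\}$ the variable $m$ runs over $N/k+O(1)$ consecutive integers, and the Dirichlet estimate \eqref{eq:sum mu/d2entera} (equivalently $\sum_{m\le M}\phi(m)/m=\tfrac{6}{\pi^2}M+O(\log M)$) yields
\[
\frac1N\sum_{n=1}^N \frac{g_k(n+a_0+b_0)}{k}\sum_{kd\mid n+a_0+b_0}\frac{\mu(d)}d=\frac{1}{k^2\zeta(2)}+O\Big(\frac{\log N}{N}\Big).
\]
The factor $1/k^2$ arises because the restriction to multiples of $k$ rescales each coordinate by $k$, reflecting $\mathcal V_k=k\mathcal V$. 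Collecting the two contributions gives the claimed estimate for $\E(S^{(k)}_{\alpha,N})$, the $O(\log N/N)$ being absorbed into $\frac{1}{\sqrt{\alpha(1-\alpha)}}O(N^{-1/4})$.

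For the variance I would write $\V(S^{(k)}_{\alpha,N})=\frac1{N^2}\sum_{n,m=1}^N\mathrm{Cov}\big(I_k(\mathbb{Z}_{\alpha,n}),I_k(\mathbb{Z}_{\alpha,m})\big)$. The $N$ diagonal terms are each bounded by $\E(I_k)\le1$, contributing $O(1/N)$. For $n\ne m$, subtracting the product of the one-point estimates from the two-point estimate cancels the product of main terms, leaving covariances controlled by the error terms above. Summing $\frac1{N^2}\sum_{n<m}\frac{1}{\sqrt{\alpha(1-\alpha)}}O(n^{-1/4})$ and $\frac1{N^2}\sum_{n<m}\frac{1}{\alpha(1-\alpha)}O\big(m^{1/4}/\sqrt{m-n}\big)$ (using $\sum_{j=1}^{m}j^{-1/2}=O(\sqrt m)$ for the latter) produces $\frac{1}{\alpha(1-\alpha)}O(N^{-1/4})$ in both cases, since $\frac{1}{\sqrt{\alpha(1-\alpha)}}\le\frac{1}{\alpha(1-\alpha)}$ as $\alpha(1-\alpha)\le\frac14$. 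This gives the stated variance bound.

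The main obstacle is not the averaging but the bookkeeping in the two-point estimate: one must verify that the inner application of Lemma~\ref{lema:imp for k} produces an $l$-independent main term (so that it factors through the outer $l$-sum), and that the divisor factor $\tau((a_0+b_0+m)/k)=O(m^{1/4})$ feeds the error $O(m^{1/4}/\sqrt{m-n})$ with an implied constant independent of $\alpha$. Keeping the dependence on $\alpha$ explicit, rather than hidden inside an $\alpha$-dependent constant as in \cite{CFF}, is precisely the refinement that makes the subsequent integration against the beta density (as in Corollary~\ref{cor:bounds for mean and var of PW}) legitimate.
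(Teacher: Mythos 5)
Your proposal is correct and follows essentially the same route as the paper: the paper's proof also rests on Lemma~\ref{lema:imp for k} as the key ingredient, restricts attention to the indices $n$ with $k$ dividing the relevant sum (re-indexed as multiples of $k$), applies the M\"obius--Euler identity \eqref{eq:Mobius y Euler} together with \eqref{eq:sum mu/d2entera}, and bounds the error via $\tau=O(n^{1/4})$, exactly as you do. The only difference is one of completeness, not of method: the paper contents itself with the mean for a special starting point, declaring the rest analogous to the $k=1$ case, whereas you spell out the two-point estimate and the covariance summation (correctly, including the absorption of $1/\sqrt{\alpha(1-\alpha)}$ into $1/(\alpha(1-\alpha))$).
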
 The implied constants in the big-O's above depend on  $(a_0,b_0)$ and on $k$, but not upon $\alpha\in (0,1)$.

\begin{proof} We content ourselves with explaining the argument in the case when the starting point is $(a_0,b_0)=(1,1)$ and just for the mean $\E(S^{(k)}_{\alpha,N})$. This argument exhibits the only differences with the case $k=1$.

Lemma \ref{lema:imp for k} with $t=s=0$ and $M=n$ gives
\[
\E\big(I_k(\mathbb{Z}_{\alpha, n})\big)=\frac{g_k(n)}{k}\sum_{kd\mid n}\frac{\mu(d)}{d}+\frac{g_k(n)}{\sqrt{\alpha(1-\alpha)}}\, O\Big(\frac{\tau(n/k)}{\sqrt{n}}\Big)\,,\quad\text{as $n\to\infty$}.
\]
This expectation is non zero only if $n$ is a multiple of $k$.

Let $N\ge 1$ and let $m=\lfloor N/k\rfloor$, so that $mk \le N <(m+1) k$.

Then, with big-O's depending on $k$, we have
\[\begin{aligned} N\, \E(S^{(k)}_{\alpha,N})&=\sum_{j=1}^m \E\big(I_k(\mathbb{Z}_{\alpha, kj})\big)=\frac{1}{k}\sum_{j=1}^m\sum_{d\mid j}\frac{\mu(d)}{d}+\frac{1}{\sqrt{\alpha(1-\alpha)}}\, O\Big(\sum_{j=1}^m\frac{\tau(j)}{\sqrt{j}}\Big)
\\
&=\frac{1}{k}\sum_{d=1}^m \frac{\mu(d)}{d} \Big\lfloor \frac{m}{d}\Big\rfloor +\frac{1}{\sqrt{\alpha(1-\alpha)}}O\big({m^{3/4}}\big)
=\frac{1}{k}\frac{m}{\zeta(2)} +\frac{1}{\sqrt{\alpha(1-\alpha)}}\,O\big({m^{3/4}}\big)\\
&=\frac{1}{k^2}\frac{N}{\zeta(2)}+\frac{1}{\sqrt{\alpha(1-\alpha)}}\,O\big({N^{3/4}}\big)\, , \quad \mbox{as $N \to \infty$}\,,
\end{aligned}
\]
where we have used \eqref{eq:sum mu/d2entera}.
And thus,
\[\E(S^{(k)}_{\alpha,N})=\frac{1}{k^2\zeta(2)}+\frac{1}{\sqrt{\alpha(1-\alpha)}}\,O\Big(\frac{1}{N^{1/4}}\Big)\, , \quad \mbox{as $N \to \infty$}\,.\qedhere\]
\end{proof}

For the proportion of  $k$-visibility time, $Q^{(k)}_N$, of P\'{o}lya's walk we deduce, as a corollary of Proposition \ref{prop:bounds for mean and var of RW dependence on alpha with k}, that:
\begin{cor}\label{cor:bounds for mean and var of PW with k} For P\'{o}lya's walk $(\mathbb{Y}_n)_{n\ge 0}$ with any given initial position $(a_0,b_0)$ such that $a_0,b_0\ge 2$ we have that, as $N \to \infty$,
\[
\E(Q^{(k)}_{N})=\frac{1}{k^2\zeta(2)}+O\Big( \frac{1} {N^{1/4}}\Big)\quad\text{and}\quad
\V(Q^{(k)}_{N})=O\Big( \frac{1} {N^{1/4}}\Big)\,.
\]
\end{cor}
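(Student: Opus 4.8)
The plan is to follow, almost verbatim, the proof of Corollary \ref{cor:bounds for mean and var of PW}, now feeding in the $k$-visibility estimates of Proposition \ref{prop:bounds for mean and var of RW dependence on alpha with k} in place of those of Proposition \ref{prop:bounds for mean and var of RW dependence on alpha}. The first thing I would observe is that the mixture identities of Lemma \ref{lemma:media y var de QN} are completely insensitive to which visibility indicator is used: their derivation rests only on the one-step and two-step mixture representations \eqref{eq:mixture of random walks one step} and \eqref{eq:mixture of random walks two steps}, in which the indicator $I$ plays no distinguished role beyond being a bounded function of the position. Replacing $I$ by $I_k$ throughout therefore yields the analogues
\[
\E(Q^{(k)}_N)=\int_0^1 \E(S^{(k)}_{\alpha,N})\, f_{a_0,b_0}(\alpha)\, d\alpha, \qquad \V(Q^{(k)}_N)=\int_0^1 \V(S^{(k)}_{\alpha,N})\, f_{a_0,b_0}(\alpha)\, d\alpha+\V(H^{(k)}_N),
\]
where $H^{(k)}_N$ is the variable on the probability space $\big((0,1),\,f_{a_0,b_0}\,d\alpha\big)$ with values $H^{(k)}_N(\alpha)=\E(S^{(k)}_{\alpha,N})$.

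For the mean, I would insert the first estimate of Proposition \ref{prop:bounds for mean and var of RW dependence on alpha with k} into the identity for $\E(Q^{(k)}_N)$ and integrate in $\alpha$. The constant term $1/(k^2\zeta(2))$ integrates to itself (the density has total mass one), while the error $\frac{1}{\sqrt{\alpha(1-\alpha)}}\,O(N^{-1/4})$ integrates to $O(N^{-1/4})$ precisely because the integral \eqref{eq:integral finita Beta con raices} is finite; this is where $a_0,b_0\ge 2$ enters (here only $a_0,b_0>1/2$ would be required).

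For the variance I would treat the two summands separately, exactly as for $k=1$. The first summand is handled by integrating the estimate $\V(S^{(k)}_{\alpha,N})=\frac{1}{\alpha(1-\alpha)}\,O(N^{-1/4})$, which becomes $O(N^{-1/4})$ after integration thanks to \eqref{eq:integral finita Beta sin raices}; this integrability demands $a_0,b_0>1$, which is the genuine reason for requiring $a_0,b_0\ge 2$. For the second summand I would use that the mean minimizes mean square deviation to bound
\[
\V(H^{(k)}_N)\le \int_0^1\big(\E(S^{(k)}_{\alpha,N})-1/(k^2\zeta(2))\big)^2 \, f_{a_0,b_0}(\alpha)\, d\alpha,
\]
then substitute the mean estimate, whose square is $\frac{1}{\alpha(1-\alpha)}\,O(N^{-1/2})$, and integrate once more via \eqref{eq:integral finita Beta sin raices}. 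Adding the two contributions gives $\V(Q^{(k)}_N)=O(N^{-1/4})$.

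I do not anticipate any real obstacle here: all the substantive input specific to $k>1$ is already isolated in Proposition \ref{prop:bounds for mean and var of RW dependence on alpha with k}, and the integrability bookkeeping is word-for-word that of the $k=1$ case. The only point deserving an explicit line is the transfer of Lemma \ref{lemma:media y var de QN} from $I$ to $I_k$; but since that lemma uses nothing about $I$ beyond the mixture formulas and boundedness, this transfer is immediate, and the rest is routine.
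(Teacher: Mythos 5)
Your proposal is correct and is essentially the paper's own (implicit) argument: the paper deduces this corollary from Proposition \ref{prop:bounds for mean and var of RW dependence on alpha with k} exactly as Corollary \ref{cor:bounds for mean and var of PW} was deduced from Proposition \ref{prop:bounds for mean and var of RW dependence on alpha}, using the $I_k$-analogue of Lemma \ref{lemma:media y var de QN}, the integrability facts \eqref{eq:integral finita Beta con raices} and \eqref{eq:integral finita Beta sin raices}, and the mean-minimizes-square-deviation bound for $\V(H^{(k)}_N)$. Your bookkeeping of where $a_0,b_0\ge 2$ is genuinely needed matches the paper's proof as well.
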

The implied constants in the big-O above depends on $(a_0,b_0)$ and $k$.

\smallskip
To prove Theorem \ref{teor:asymptotic gcd=k proportion of polya walks}, we deduce, exactly as in the case $k=1$,  that  for fixed $k \ge 1$,
\[
\lim_{N \to \infty} \frac{1}{N} \sum_{n=1}^N I_k\big(\mathbb{Y}_n\big)=\frac{1}{k^2\zeta(2)}\, ,\quad \mbox{almost surely}\,,
\]
and, thus, that
\[\mbox{almost surely}, \quad \lim_{N \to \infty} \frac{1}{N} \sum_{n=1}^N I_k\big(\mathbb{Y}_n\big)=\frac{1}{k^2\zeta(2)}\, , \quad \mbox{for each $k \ge 1$}\, .\]
which is the assertion of Theorem \ref{teor:asymptotic gcd=k proportion of polya walks}.

Likewise, for $\alpha$-random walks we have, as announced after  Theorem  \ref{teor:basic asymptotic of CFF}, that
 \begin{thm}\label{teor:basic asymptotic of CFF with k} For any $\alpha \in (0,1)$,   the $\alpha$-random walk $\mathbb{Z}_{\alpha,n}$ starting at any given initial position $(a_0,b_0)$ satisfies
 \[
 \mbox{almost surely} \quad \lim_{N \to \infty} \frac{1}{N} \sum_{n=1}^N I_k(\mathbb{Z}_{\alpha, n})=\frac{1}{k^2\zeta(2)}\, , \quad \mbox{for all $k \ge 1$}\, .\]
 \end{thm}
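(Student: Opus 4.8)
The plan is to feed the mean and variance estimates of Proposition \ref{prop:bounds for mean and var of RW dependence on alpha with k} into the second moment method of Proposition \ref{prop:Borel} for each fixed $k$, and then to upgrade the resulting ``for each $k$'' almost-sure conclusion to a single ``for all $k$'' almost-sure conclusion by a countable-intersection argument.

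First I would fix $\alpha \in (0,1)$ and a single integer $k \ge 1$. The random variables $W_n = I_k(\mathbb{Z}_{\alpha,n})$ are indicators, hence uniformly bounded by $1$, and $S^{(k)}_{\alpha,N} = \frac{1}{N}\sum_{n=1}^N W_n$ is exactly the average $U_N$ appearing in Proposition \ref{prop:Borel}. The mean estimate of Proposition \ref{prop:bounds for mean and var of RW dependence on alpha with k} gives $\lim_{N\to\infty}\E(S^{(k)}_{\alpha,N}) = 1/(k^2\zeta(2))$, so the hypothesis $\lim_N \E(U_N) = \mu$ holds with $\mu = 1/(k^2\zeta(2))$; and the variance estimate gives $\V(S^{(k)}_{\alpha,N}) \le B/N^{1/4}$ for a constant $B$ depending on $\alpha$, $k$ and $(a_0,b_0)$ but not on $N$, since $\alpha$ and $k$ are now fixed. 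With $\delta = 1/4$, Proposition \ref{prop:Borel} then yields $\lim_{N\to\infty} S^{(k)}_{\alpha,N} = 1/(k^2\zeta(2))$ almost surely.

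Next I would pass from ``for each fixed $k$, almost surely'' to ``almost surely, for every $k$''. Letting $E_k$ be the event on which $S^{(k)}_{\alpha,N}$ converges to $1/(k^2\zeta(2))$, the previous step shows $\P(E_k) = 1$ for every $k \ge 1$. Since $k$ ranges over the countable set $\N$, the complements satisfy $\P\big(\bigcup_{k \ge 1} E_k^c\big) \le \sum_{k \ge 1}\P(E_k^c) = 0$, so $\bigcap_{k \ge 1} E_k$ has probability one; on this event all the stated limits hold simultaneously, which is the assertion of the theorem.

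The main obstacle lies not in this argument but in the estimates it invokes: all the genuine work is already contained in Proposition \ref{prop:bounds for mean and var of RW dependence on alpha with k}, whose variance bound rests on a two-point computation built from Lemma \ref{lema:imp for k}. Given those inputs, what remains is essentially formal; the only point requiring care is that the second moment method must be applied to each $k$ separately, with its own $k$-dependent constant $B$, before the countable intersection is taken, rather than seeking a single bound uniform in $k$.
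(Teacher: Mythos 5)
Your proposal is correct and follows essentially the same route as the paper: the paper deduces this theorem from Proposition \ref{prop:bounds for mean and var of RW dependence on alpha with k} together with the second moment method of Proposition \ref{prop:Borel} applied for each fixed $k$ (with $\alpha$ fixed, so the $\alpha$- and $k$-dependent constants are harmless), and then upgrades to ``almost surely, for all $k$'' exactly by the countability of the set of $k$'s, just as you do. Your explicit remark that the constant $B$ may depend on $k$ and that the intersection over $k$ must come after applying Proposition \ref{prop:Borel} separately for each $k$ matches the paper's (more tersely stated) argument.
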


\section{Changing the step of the walk}\label{section:Changing the steps of the walk}

We fix now an integer step $c\ge 1$. We now consider P\'{o}lya's walk $(\mathbb{Y}^c_n)_{n \ge 0}$,  starting from $\mathbb{Y}^c_0=(a_0, b_0)$ with $(a_0,b_0)\in \N^2$,  when the steps of the walk are of size $c$. This means that for  each $n\ge 0$, the jump  $\mathbb{Y}^c_{n+1}-\mathbb{Y}^c_n$ can take only two values, $(c,0)$ and $ (0,c)$: the walk either  moves $c$ units to the right or $c$ units up.
 The total linear  distance travelled from $\mathbb{Y}^c_0$ to $\mathbb{Y}^c_n$ is $nc$.

\subsection{Exchangeability and mixtures}

For each $n \ge 0$, \textit{given} the position $\mathbb{Y}^c_n=(a_n,b_n)$ of the walk at time $n$, the conditional probabilities of the only two admissible  jumps are
\[
\P\big(\mathbb{Y}^c_{n+1}-\mathbb{Y}^c_n=(c,0)\big)=\frac{a_n}{a_n+b_n}\quad \text{and}\quad
 \P\big(\mathbb{Y}^c_{n+1}-\mathbb{Y}^c_n=(0,c)\big)=\frac{b_n}{a_n+b_n}\,\cdot
 \]

This walk corresponds to a P\'{o}lya's urn process where at each stage, $c$ balls are added to the urn of the color  of the observed/drawn  ball. We always have at time $n$ that $a_n+b_n=a_0+b_0+nc$.

For each $\alpha \in (0,1)$, we also consider the $\alpha$-random walk $(\mathbb{Z}^c_{\alpha, n})_{n\ge 0}$ starting from $(a_0,b_0)$, with right-step $(c,0)$ with probability $\alpha$ and up-step $(0,c)$ with probability \mbox{$(1-\alpha)$}.

\smallskip

As we have discussed in the case $c=1$, see Section~\ref{section:exchange polya urn walk}, the sequence of Bernoulli variables $F^c_n$ registering whether the $n$-th step is to the right, $F^c_n=1$, or up, $F^n_n=0$, is exchangeable and in fact, if $(x_1, \ldots, x_n)$ is a list extracted from $\{0,1\}$ and if $t_n=\sum_{j=1}^n x_j$, then
\[
\begin{aligned}
\P(F^c_1=x_1, \ldots, F^c_n=x_n)&=\frac{\prod_{j=0}^{t_n-1} (a_0+jc)\, \prod_{j=0}^{n-t_n-1} (b_0+jc)}{\prod_{j=0}^{n-1} (a_0+b_0+jc)}\\[6pt]&=
\frac{\Gamma(a_0/c+t_n)}{\Gamma(a_0/c)}
\frac{\Gamma(b_0/c+n-t_n)}{\Gamma(b_0/c)}
\frac{\Gamma((a_0+b_0)/c)}{\Gamma((a_0+b_0)/n+n)}\\[6pt]&=\frac{\textrm{Beta}(a_0/c+t_n, b_0/c+n-t_n)}{\textrm{Beta}(a_0/c,b_0/c)}\,\cdot\end{aligned}\]
The de Finetti mixture measure is in this situation $d\nu(\alpha)=f_{a_0/c, b_0/c}(\alpha)\, d\alpha$, that is, a $\textsc{beta}(a_0/c, b_0/c)$ distribution.

\smallskip

The proportion $\frac{1}{n}\sum_{j=1}^n F^c_j$ converges almost surely to a variable $L^c$ which follows a $\textsc{beta}(a_0/c, b_0/c)$ distribution. Conditioning on $L^c=\alpha$, the sequence $(F_n^c)_{n\ge 1}$ consists of independent Bernoulli variables with parameter $\alpha$, and thus the distribution of P\'{o}lya's walk $(\mathbb{Y}^c_n)$ \textit{conditioned on the limit $L^c$ taking the value $\alpha\in (0,1)$}  coincides with the distribution of the $\alpha$-random walk $(\mathbb{Z}^c_{\alpha,n})$:
\begin{equation}
\label{eq:contioning on limit step c}
 (\mathbb{Y}^c_1, \ldots, \mathbb{Y}^c_N\,|\, L^c=\alpha )\overset{\rm d}{=}  (\mathbb{Z}^c_{\alpha,1}, \ldots, \mathbb{Z}^c_{\alpha,N} )\, , \quad \mbox{for each $N\ge 1$}\, . \end{equation}

For a fixed time $n \ge 1$, equation  \eqref{eq:contioning on limit step c} means, in particular,  that
\[
\P\big( \mathbb{Y}^c_n=\mathbb{Y}^c_0+c (k,n-k)\,|\, L^c=\alpha\big) =\binom{n}{k} \alpha^k (1-\alpha)^{n-k}=\P\big(\mathbb{Z}^c_{\alpha,n}=\mathbb{Z}^c_{\alpha,0}+c (k,n-k)\big)\, ,
\]
for every $k,n$ such that  $0 \le k \le n$ and all $\alpha \in (0,1)$,
and that
\begin{equation}\label{eq:mixture of random walks one step c}
\begin{aligned}
\P\big( \mathbb{Y}^c_n=\mathbb{Y}^c_0+c(k,n-k)\big)&=\int_0^1 \P\big(\mathbb{Z}_{\alpha,n}=\mathbb{Z}_{\alpha,0}+c(k,n-k)\big) \,f_{a_0/c,b_0/c}(\alpha) \,d\alpha
\\&=\int_0^1 \P(\textsc{bin}(n,\alpha)=k)\,f_{a_0/c,b_0/c}(\alpha) \,d\alpha \,,\end{aligned}\end{equation}
for every $k,n$ such that $0 \le k \le n$.

In other terms, \textit{P\'{o}lya's walk $ (\mathbb{Y}^c_n )_{n \ge 0}$ with steps of size $c$ starting from the initial position $(a_0,b_0)$ is a mixture of the $\alpha$-random walks $ (\mathbb{Z}^c_{\alpha,n} )_{n \ge 0}$, all starting at $(a_0,b_0)$, where the mixture parameter $\alpha \in (0,1)$ follows a $\textsc{Beta}(a_0/c,b_0/c)$ probability distribution.}

\subsection{An extension of Dirichlet's density result}\label{sec:extension 1 de Dirichlet}

The P\'{o}lya walk $\mathbb{Y}_n^c$ starting from  $\mathbb{Y}_0^c=(1,1)$ may only visit
 the points of the grid $\mathcal{G}_c$ of $\N^2$ given by
\[\mathcal{G}_c=\{(1+n c, 1+mc):  n,m\ge 0\}\,.\] The set $\mathcal{G}_c\cap \mathcal{V}$ is the set of visible points which the walk $\mathbb{Y}_n^c$ starting $ (1,1)$ can actually visit.

The density in $\N^2$ of the intersection $\mathcal{G}_c\cap \mathcal{V}$  (of visible and visitable points) is determined, as we are going to see in Proposition \ref{prop:density if step c},  by the quantity
\begin{equation}\label{eq:first def of Delta(c)}
\Delta(c)\triangleq \sum_{\substack{d \ge 1,\\ \gcd(d,c)=1}} \frac{\mu(d)}{d^2}\,\cdot
\end{equation}
Observe that $\Delta(1)=1/\zeta(2)$.
We may write, alternatively,
\begin{equation}\label{eq:second def of Delta(c)}
\Delta(c)=\prod_{p\,\nmid\, c}\Big(1-\frac{1}{p^2}\Big)= \frac{1}{\zeta(2)}\, \frac{1}{\prod_{p \mid c}\limits \big(1-{1}/{p^2}\big)}\, \cdot
\end{equation}
From 
 expression  \eqref{eq:second def of Delta(c)} we see that $\Delta(c)$ depends only on the prime factors of $c$, disregarding their multiplicity; moreover, it shows that $1\ge \Delta(c)\ge 1/\zeta(2)$, as it should.

\begin{prop} \label{prop:density if step c} For any integer $c \ge 1$, we have that
\begin{equation}\label{eq:density if step c}\lim_{N\to \infty}\frac{1}{N^2}\#\big\{0\le n,m\le N: (1+nc,1+mc)\in \mathcal{V}\big\}=\Delta(c)\,.\end{equation}
\end{prop}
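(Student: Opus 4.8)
The plan is to mimic, almost verbatim, the Dirichlet density computation recalled in Section~\ref{seccion:Dirichlet's density}, the only genuinely new feature being that the modulus $d$ will be forced to be coprime to the step $c$, and it is precisely this constraint that turns $1/\zeta(2)$ into $\Delta(c)$. First I would rewrite the cardinality using the Möbius representation \eqref{eq:representation of indicator of gcd} with $k=1$, namely $\delta_1(\gcd(x,y))=\sum_{d\mid x,\,d\mid y}\mu(d)$, applied to $x=1+nc$ and $y=1+mc$:
\[
\#\big\{0\le n,m\le N:(1+nc,1+mc)\in\mathcal{V}\big\}=\sum_{0\le n,m\le N}\ \sum_{\substack{d\mid 1+nc\\ d\mid 1+mc}}\mu(d).
\]
Interchanging the order of summation and using that the two divisibility conditions separate (one depends only on $n$, the other only on $m$), this becomes
\[
\sum_{d\ge 1}\mu(d)\,A_d(N)^2,\qquad\text{where}\quad A_d(N)\triangleq\#\{0\le n\le N:\ d\mid 1+nc\}.
\]

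The heart of the argument is then the elementary analysis of $A_d(N)$. The congruence $1+nc\equiv 0\pmod d$, i.e.\ $cn\equiv -1\pmod d$, is solvable in $n$ exactly when $c$ is invertible modulo $d$, that is, when $\gcd(c,d)=1$; in that case its solutions form a single residue class modulo $d$, so counting representatives in $\{0,\ldots,N\}$ yields $A_d(N)=(N+1)/d+O(1)$ with an absolute $O(1)$. When $\gcd(c,d)>1$ the congruence has no solution and $A_d(N)=0$. This is exactly the mechanism that silently inserts the restriction $\gcd(d,c)=1$ into the sum. Moreover $A_d(N)=0$ for $d>1+Nc$, since any solution $n\in\{0,\ldots,N\}$ would force $d\le 1+nc\le 1+Nc$, so the sum is a genuinely finite sum over $1\le d\le 1+Nc$ with $\gcd(d,c)=1$.

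Substituting $A_d(N)^2=(N+1)^2/d^2+O((N+1)/d)+O(1)$ and summing against $\mu(d)$ over the (finite, coprime) range then separates a main term
\[
(N+1)^2\sum_{\substack{1\le d\le 1+Nc\\ \gcd(d,c)=1}}\frac{\mu(d)}{d^2}=(N+1)^2\Big(\Delta(c)+O\big(\tfrac{1}{Nc}\big)\Big)=N^2\,\Delta(c)+O(N),
\]
where the tail estimate uses $\sum_{d>1+Nc}|\mu(d)|/d^2=O(1/(Nc))$ together with the definition \eqref{eq:first def of Delta(c)} of $\Delta(c)$, from an error term: the cross term contributes $O\big(N\sum_{d\le 1+Nc}1/d\big)=O(N\log N)$ and the $O(1)$ term contributes $O(Nc)=O(N)$. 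Dividing by $N^2$ gives $\Delta(c)+O((\log N)/N)\to\Delta(c)$, as claimed. I do not expect any serious obstacle here, since the structure copies Section~\ref{seccion:Dirichlet's density}; the only points requiring care are the two essentially bookkeeping matters, namely justifying that the coprimality restriction $\gcd(d,c)=1$ is the correct and complete condition for $A_d(N)\neq 0$, and verifying that the truncation at $d\le 1+Nc$ and the $O(1)$ in the residue count together produce only an $O(N\log N)=o(N^2)$ error.
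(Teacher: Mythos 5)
Your proposal is correct and takes essentially the same route as the paper's own proof: the identical M\"obius-inversion decomposition into $\sum_{d}\mu(d)\,A_d(N)^2$, the same observations that $A_d(N)=0$ unless $\gcd(d,c)=1$ and $d\le 1+Nc$, and the same residue-class count $A_d(N)=(N+1)/d+O(1)$. The only difference is the finishing step, and it is cosmetic: you carry explicit error terms to get a quantitative $N^2\Delta(c)+O(N\log N)$ (in the style of Section~\ref{seccion:Dirichlet's density}), whereas the paper passes to the limit in the normalized sum via dominated convergence, using the uniform bound $\frac{d}{N}A_d(N)\le c+2$.
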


The case $c=1$ is Dirichlet's density result of
Section \ref{seccion:Dirichlet's density}; the proof of Proposition~\ref{prop:density if step c}, below, follows the lines of the derivation in that section. 

Because of \eqref{eq:density if step c}, the density of $\mathcal{G}_c\cap \mathcal{V}$ is
\[
D(\mathcal{G}_c\cap \mathcal{V})=\frac{1}{c^2}\, \Delta(c).
\]
Since the density of $\mathcal{G}_c$ is $1/c^2$, we may write $\Delta(c)$ as the \textit{relative density}
\[
\Delta(c)=\frac{D(\mathcal{G}_c\cap \mathcal{V})}{D(\mathcal{G}_c)}\,, \quad \mbox{for $c\ge 1$}\,\cdot\]

\begin{proof}
Observe that for integer $N \ge 1$, we have that
\[
\begin{aligned}
&\#\{0\le n,m\le N: (1+nc,1+mc)\in \mathcal{V}\}=\#\{0\le n,m\le N: \gcd(1+nc,1+mc)=1\}
\\
&\qquad =\sum_{0\le n, m\le N}\delta_1(\gcd(1+nc,1+mc))
=\sum_{d \ge 1}\mu(d)\, \big[\#\{0\le j\le N: d\mid 1+jc\}\big]^2
\end{aligned}
\]

Note that, for $d\ge 1$, the number $\#\{0\le j\le N: d\mid 1+jc\}$ is 0 if $\gcd(d,c)>1$ or $d\ge Nc+2$.

Now, if $\gcd(d,c)=1$, the equation $1+xc\equiv 0 \mod d$ has a unique solution, and so 
\[
(\star) \qquad \frac{N}{d}-1\le \#\{0\le j\le N: d\mid 1+jc\}\le \frac{N}{d}+1\,.\]
If, moreover, $d\le Nc+1$, then
\[
(\star\star)\qquad
\frac{d}{N}\#\{0\le j\le N: d\mid 1+jc\}\le 1+ \frac{d}{N}\le   c+2\,.\]

Dominated convergence in conjunction with $(\star)$ and $(\star\star)$ gives us that
\begin{align*}
&\frac{1}{N^2}\sum_{\substack{d \ge 1;\\ \gcd(d,c)=1}}\mu(d)\, \big[\#\{0\le j\le N: d\mid 1+jc\}\big]^2
\\&\qquad =\sum_{\substack{d \ge 1; \gcd(d,c)=1}}\frac{\mu(d)}{d^2} \Big[\frac{d}{N}\#\{0\le j\le N: d\mid 1+jc\}\uno\nolimits_{\{1\le d \le 1+Nc\}}(d)\Big]^2
\end{align*}
tends to $\Delta(c)$ as $N\to \infty$.
\end{proof}

\subsection{Asymptotic average visibility of P\'{o}lya's of step $c$}\label{section:walk with step c}

Fix $c\ge 1$. Consider the P\'{o}lya walk $(\mathbb{Y}_n^c)_{n\ge 0}$ with steps of size $c\ge 1$. We denote with $Q_N^c$, for integer $N\ge 1$, the random variable
\[
Q_N^c= \frac{1}{N} \#\big\{1\le n \le N: I(\mathbb{Y}^c_n)=1\big\}=\frac{1}{N} \sum_{n=1}^N I(\mathbb{Y}_n^c)\,,\]
and also, for integer $N\ge 1$ and $\alpha \in (0,1)$, we write
\[
S_{\alpha,N}^c= \frac{1}{N} \#\big\{1\le n \le N: I(\mathbb{Z}^c_{\alpha,n})=1\big\}=\frac{1}{N} \sum_{n=1}^N I(\mathbb{Z}_{\alpha, n}^c)\,.\]

\

\subsubsection{Starting point $(a_0,b_0)=(1,1)$.} We first  consider walks (P{\' o}lya and standard)   \textit{starting from the initial position $(a_0,b_0)=(1,1)$} and we shall appeal to the density  result \eqref{eq:density if step c} of Proposition \ref{prop:density if step c}.

We have the following asymptotic result.

\begin{thm}\label{teor:polya from (1,1) step c, bis}
Let $c\ge 1$, be an integer. For P\'{o}lya's walk $(\mathbb{Y}^c_n)_{n \ge 0}$ with up-step $(0,c)$ and right-step $(c,0)$ starting from $(a_0,b_0)=(1,1)$, we have that
\[\lim_{N \to \infty} Q_N^c=\Delta(c)\, , \quad \mbox{almost surely}\, .\]
\end{thm}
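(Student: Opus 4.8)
The plan is to reproduce, for the grid $\mathcal{G}_c$, the three-step scheme that proved Theorem~\ref{teor:asymptotic coprime proportion of polya walks}: first obtain mean and variance estimates for the step-$c$ standard walk $S_{\alpha,N}^c$ with \emph{explicit} dependence on $\alpha$, then transfer them to $Q_N^c$ through the beta mixture, and finally invoke the second moment method of Proposition~\ref{prop:Borel}. The only genuinely new arithmetic input is a step-$c$ analogue of Lemma~\ref{lema:imp}. Writing $\mathbb{Z}^c_{\alpha,n}=(1,1)+c(l,n-l)$ and using $\gcd(1+lc,1+(n-l)c)=\gcd(1+lc,nc+2)$, expansion of $\delta_1$ by \eqref{eq:representation of indicator of gcd} gives
\[
\E\big(I(\mathbb{Z}^c_{\alpha,n})\big)=\sum_{d\mid nc+2}\mu(d)\sum_{\substack{0\le l\le n\\ 1+lc\equiv 0\!\!\pmod d}}\binom nl\alpha^l(1-\alpha)^{n-l}.
\]
Only divisors $d$ with $\gcd(d,c)=1$ survive (if a prime $p\mid\gcd(c,d)$ and $d\mid 1+lc$ then $p\mid 1$), and for those the congruence $1+lc\equiv0$ selects a single residue class modulo $d$, so Lemma~\ref{lemma:binomial probabilities residue class} yields $\tfrac1d+\tfrac{1}{\sqrt{\alpha(1-\alpha)}}\,O(1/\sqrt n)$ for the inner sum. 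Summing over $n\le N$, swapping the order of summation exactly as in the proof of Proposition~\ref{prop:density if step c} (the count $\#\{n\le N:d\mid nc+2\}$ equals $N/d+O(1)$ when $\gcd(d,c)=1$), and using \eqref{eq:order of tau}, produces $\E(S_{\alpha,N}^c)=\Delta(c)+\tfrac{1}{\sqrt{\alpha(1-\alpha)}}O(N^{-1/4})$; the analogous two-time computation (mirroring Lemma~\ref{lemma:mean variance Zan}) gives $\V(S_{\alpha,N}^c)=\tfrac{1}{\alpha(1-\alpha)}O(N^{-1/4})$, with constants independent of $\alpha$.

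For the mean of $Q_N^c$ I would write $\E(Q_N^c)=\int_0^1\E(S_{\alpha,N}^c)\,f_{1/c,1/c}(\alpha)\,d\alpha$ (the step-$c$ analogue of \eqref{eq:expectation of proportion of visible at time n}, valid by \eqref{eq:mixture of random walks one step c}, since the mixing measure for start $(1,1)$ is $\textsc{beta}(1/c,1/c)$) and pass to the limit by \emph{dominated convergence}: the integrand tends pointwise to $\Delta(c)$ by the estimate just obtained, and is bounded by the integrable density $f_{1/c,1/c}$ because $0\le \E(S_{\alpha,N}^c)\le1$. Hence $\E(Q_N^c)\to\Delta(c)$ with no integrability of the error term required.

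The delicate point is the variance, and it is the main obstacle. For $(a_0,b_0)=(1,1)$ and $c\ge2$ the mixing measure is $\textsc{beta}(1/c,1/c)$ with both parameters $\le1/2<1$, so—unlike in Corollary~\ref{cor:bounds for mean and var of PW}, where $a_0,b_0\ge2$ was imposed precisely to secure \eqref{eq:integral finita Beta sin raices}—the factors $1/\sqrt{\alpha(1-\alpha)}$ and $1/(\alpha(1-\alpha))$ are no longer integrable against $f_{1/c,1/c}$. I would circumvent this by \emph{truncation} rather than by a uniform bound. In the decomposition (the step-$c$ analogue of \eqref{eq:variance of proportion of visible at time n, con H}) $\V(Q_N^c)=\int_0^1\V(S_{\alpha,N}^c)f_{1/c,1/c}(\alpha)\,d\alpha+\V(H_N^c)$, split $(0,1)$ into the endpoint zones $(0,N^{-1/4})\cup(1-N^{-1/4},1)$ and the central part. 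On the endpoint zones use the trivial bound $\V(S_{\alpha,N}^c)\le1$ together with $f_{1/c,1/c}(\alpha)\asymp\alpha^{1/c-1}$ near $0$ (symmetrically near $1$), which integrates to $O(N^{-1/(4c)})$; on the central part use $\V(S_{\alpha,N}^c)\le C N^{-1/4}/(\alpha(1-\alpha))$, whose integral against $\alpha^{1/c-1}$ is dominated by the lower cut-off and is again $O(N^{-1/(4c)})$. The same splitting, applied to $\V(H_N^c)\le\int_0^1(\E(S_{\alpha,N}^c)-\Delta(c))^2 f_{1/c,1/c}(\alpha)\,d\alpha$ with the bound $(\E(S_{\alpha,N}^c)-\Delta(c))^2\le\min\{1,\,C N^{-1/2}/(\alpha(1-\alpha))\}$ and cut-off re-optimized to $N^{-1/2}$, yields $O(N^{-1/(2c)})$, negligible against the first term. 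Altogether $\V(Q_N^c)=O(N^{-1/(4c)})$.

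With $\E(Q_N^c)\to\Delta(c)$ and $\V(Q_N^c)\le B\,N^{-1/(4c)}$, Proposition~\ref{prop:Borel} applied to $W_n=I(\mathbb{Y}_n^c)$, with $\mu=\Delta(c)$ and $\delta=1/(4c)$, gives $Q_N^c\to\Delta(c)$ almost surely, as claimed. The heart of the argument is the endpoint concentration of $\textsc{beta}(1/c,1/c)$: balancing the truncation at $\alpha\sim N^{-1/4}$ is exactly what converts the non-integrable $\alpha$-error terms into a polynomial decay rate, and it is that rate which feeds the second moment method; everything else is a routine transcription of the $c=1$ arguments with $1/\zeta(2)$ replaced by $\Delta(c)$.
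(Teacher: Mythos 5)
Your proposal is correct, and it reaches the theorem by a genuinely different route at the one point where the proof is delicate. The arithmetic core is the same as the paper's: you expand $\delta_1$ by M\"obius, observe that only divisors coprime to $c$ survive, reduce the condition $1+lc\equiv 0 \pmod d$ to a single residue class modulo $d$, and invoke Lemma~\ref{lemma:binomial probabilities residue class} (this is precisely the content of the paper's Lemma~\ref{lemma:binomial probabilities residue class with c}), arriving at the same estimates for $\E(S^c_{\alpha,N})$ and $\V(S^c_{\alpha,N})$ with explicit $\alpha$-dependence. The divergence is in how the non-integrability of these error terms against the $\textsc{beta}(1/c,1/c)$ mixing density is overcome. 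The paper sidesteps the issue: it first proves the almost sure limit for starting points $(1+kc,1+qc)$ with $k,q\ge 1$ --- where the mixing measure is $\textsc{beta}(1/c+k,1/c+q)$ with both parameters larger than $1$, so that \eqref{eq:integral finita Beta con raices} and \eqref{eq:integral finita Beta sin raices} apply exactly as in Corollary~\ref{cor:bounds for mean and var of PW} --- and then transfers the conclusion to $(1,1)$ via Remark~\ref{remark: walk enters comfort zone}: the walk almost surely leaves the boundary lines, and a finite initial segment does not change the Ces\`{a}ro limit. You instead work directly at $(1,1)$: dominated convergence (using $0\le \E(S^c_{\alpha,N})\le 1$) gives $\E(Q^c_N)\to\Delta(c)$ with no rate needed, and the truncation of $(0,1)$ at scale $N^{-1/4}$ (resp.\ $N^{-1/2}$ for $\V(H^c_N)$), balancing the trivial bound against the $\alpha$-dependent one, converts the endpoint mass of $\textsc{beta}(1/c,1/c)$ into the polynomial rate $O(N^{-1/(4c)})$ that Proposition~\ref{prop:Borel} requires; I checked the exponents and they are right (for $c=1$ your central integral picks up a $\log N$ factor, which is harmless since any positive $\delta$ suffices). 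What each approach buys: yours is self-contained and quantitative at $(1,1)$ itself, giving an explicit variance decay there and dispensing with the restart/strong-Markov step implicit in the paper's appeal to Remark~\ref{remark: walk enters comfort zone}; the paper's is analytically softer, reusing the interior case without any new integration, at the cost of that probabilistic transfer argument.
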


This is (part of) Theorem~\ref{teor:polya from (1,1) step c} from the introduction: the case where $(a_0, b_0)=(1,1)$. In the notation there, $\Delta(1, 1; c,c)$ coincides with the (relative) density $\Delta(c)$ that we are dealing with in this section.

Recall that $\Delta(1)=1/\zeta(2)$, as it should, in accordance with Theorem~\ref{teor:asymptotic coprime proportion of polya walks}.  Notice moreover (see the very definition \eqref{eq:second def of Delta(c)} of $\Delta(c)$) that, the more prime factors the step $c$ has, the more time P\'{o}lya's walk with up and right-steps of size $c$ remains invisible.

By the way, for the $\alpha$-random walk $(\mathbb{Z}^c_{\alpha,n})_{n \ge 0}$  we also have
\[\lim_{N \to \infty} S^c_{\alpha, N}=\Delta(c)\, , \quad \mbox{almost surely}\, .\]

The proof of Theorem \ref{teor:polya from (1,1) step c}  follows the lines of the proof of Theorem \ref{teor:asymptotic coprime proportion of polya walks}. The main differences are listed below.

We use the following extension (and corollary) of Lemma \ref{lemma:binomial probabilities residue class}
\begin{lemma}\label{lemma:binomial probabilities residue class with c}
For integers $n\ge 1$, $ c,d\ge 1$ such that $\gcd(c,d)=1$, and $r\in\Z$,
there is an absolute constant $C>0$ such that, for any $\alpha\in(0,1)$, there holds
\begin{equation}\label{eq:binomial probabilities residue class with c}
\Big|\sum_{\substack{0\le l\le n;
\\
lc\,\equiv\, r{ \ \text{\rm mod}\, } d}}\binom nl \alpha^l(1-\alpha)^{n-l}-\frac1d\Big|\le \frac{C}{\sqrt{\alpha(1-\alpha)}}\, \frac{1}{\sqrt{n}}.
\end{equation}
\end{lemma}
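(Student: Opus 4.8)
The plan is to deduce this directly from Lemma~\ref{lemma:binomial probabilities residue class} by a change of residue class, exploiting the hypothesis $\gcd(c,d)=1$. The only structural difference between \eqref{eq:binomial probabilities residue class with c} and \eqref{eq:binomial probabilities residue class} is that the summation index $l$ is now constrained by the congruence $lc\equiv r{ \ \text{\rm mod}\, } d$ rather than by $l\equiv r{ \ \text{\rm mod}\, } d$. Since $c$ is coprime to $d$, it is invertible modulo $d$, and this constraint on $l$ is again a single ordinary residue class modulo $d$; once this is recognized, Lemma~\ref{lemma:binomial probabilities residue class} applies verbatim.

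Concretely, first I would invoke the coprimality $\gcd(c,d)=1$ to produce an integer $c^\ast$ with $cc^\ast\equiv 1{ \ \text{\rm mod}\, } d$, the multiplicative inverse of $c$ in $\Z/d\Z$. Multiplying the congruence $lc\equiv r{ \ \text{\rm mod}\, } d$ by $c^\ast$ shows it is equivalent to $l\equiv c^\ast r{ \ \text{\rm mod}\, } d$. Hence, setting $r^\prime$ to be the representative of $c^\ast r$ in $\{0,1,\ldots,d-1\}$, the index sets coincide:
\[
\{0\le l\le n: lc\equiv r{ \ \text{\rm mod}\, } d\}=\{0\le l\le n: l\equiv r^\prime{ \ \text{\rm mod}\, } d\}\,.
\]
Consequently the left-hand sum in \eqref{eq:binomial probabilities residue class with c} is literally the left-hand sum in \eqref{eq:binomial probabilities residue class} with $r$ replaced by $r^\prime$.

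The second and final step is then to apply Lemma~\ref{lemma:binomial probabilities residue class} with this $r^\prime$, which immediately yields
\[
\Big|\sum_{\substack{0\le l\le n;\\ lc\,\equiv\, r{ \ \text{\rm mod}\, } d}}\binom nl \alpha^l(1-\alpha)^{n-l}-\frac1d\Big|\le \frac{C}{\sqrt{\alpha(1-\alpha)}\,\sqrt{n}}\,,
\]
with the same absolute constant $C$ as in Lemma~\ref{lemma:binomial probabilities residue class}. The point worth flagging explicitly is the uniformity of the bound: because Lemma~\ref{lemma:binomial probabilities residue class} holds for \emph{every} residue class $r^\prime$ with one and the same absolute constant, the resulting $C$ depends neither on $c$, $d$, $r$, $n$, nor on $\alpha$, exactly as asserted. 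There is essentially no obstacle here; the lemma is a genuine corollary, and the only thing to verify carefully is precisely that the reduction $r\mapsto r^\prime$ does not smuggle in any dependence of the constant on the parameters, which it does not since $r^\prime$ ranges over the same set of admissible residues regardless of $c$.
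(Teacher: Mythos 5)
Your proof is correct and is essentially identical to the paper's own argument: both reduce the congruence $lc\equiv r \ \text{mod}\ d$ to an ordinary residue class $l\equiv c^\ast r \ \text{mod}\ d$ via the multiplicative inverse of $c$ modulo $d$, and then apply Lemma~\ref{lemma:binomial probabilities residue class} verbatim. Your explicit remark that the absolute constant $C$ is unaffected by the reduction is a welcome (if brief) addition, since the uniformity in $\alpha$, $c$, $d$, $r$ is exactly what the later applications require.
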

\begin{proof}
Since $\gcd(c,d)=1$, (the residue class of) $c$ has an inverse $\beta$ in the group $\Z_d$; thus $\beta c\equiv 1$ mod $d$. So that
\[lc\equiv r{ \ \text{\rm mod}\, } d \iff l\equiv \beta r{ \ \text{\rm mod}\, } d\,.\] Applying Lemma \ref{lemma:binomial probabilities residue class} with $r$ replaced by $\beta r$, we get the result.
\end{proof}

Assume $(a_0,b_0)=(1,1)$.
For $\alpha\in (0,1)$ and integer $n \ge 1$, we have that
\[\begin{aligned}
\E(I(\mathbb{Z}^c_{\alpha, n}))&=\sum_{l=0}^n \binom{n}{l} \alpha^l (1-\alpha)^{n-l} \delta_1(\gcd(1+lc, 2+nc))\\
&=\sum_{l=0}^n \binom{n}{l} \alpha^l (1-\alpha)^{n-l} \sum_{\substack{d\mid 1+lc,\\d\mid 2+nc}}\mu(d)\,.\end{aligned}\]

Observe that if  $d\mid 1+lc$, for some $l \ge 0$, then $\gcd(c,d)=1$, and thus we may rewrite the expression above as
\[\begin{aligned}
\E(I(\mathbb{Z}^c_{\alpha, n}))=
\sum_{\substack{d\mid 2+nc;\\ \gcd(c,d)=1}} \mu(d)\sum_{\substack{0\le l \le n;\\ lc\equiv -1{ \ \text{\rm mod}\, } d}}\binom{n}{l} \alpha^l (1-\alpha)^{n-l}\,.\end{aligned}\]

Using Lemma \ref{lemma:binomial probabilities residue class with c} and arguing as in the proof of Lemma \ref{lemma:mean variance Zan}, we deduce that
\[
\E(I(\mathbb{Z}^c_{\alpha, n}))=\sum_{\substack{d\mid 2+nc;\\ \gcd(c,d)=1}}\frac{\mu(d)}{d}+\frac{1}{\sqrt{\alpha(1-\alpha)}} \, O\Big(\frac{1}{n^{1/4}}\Big)\,,\quad\text{as $n\to\infty$}.
\]

Analogously, if the walk starts at $(a_0,b_0)=(1+kc,1+qc)$, for some integers $k,q\ge 0$, we have
\[
\E(I(\mathbb{Z}^c_{\alpha, n}))=\sum_{\substack{d\mid 2+(k+q)c+nc;\\ \gcd(c,d)=1}}\frac{\mu(d)}{d}+\frac{1}{\sqrt{\alpha(1-\alpha)}} \,O\Big(\frac{1}{n^{1/4}}\Big)\,, \quad\text{as $n\to\infty$}.
\]
As in Proposition \ref{prop:bounds for mean and var of RW dependence on alpha},
 we deduce for starting point $(a_0,b_0)=(1+kc,1+qc)$, with $k,q\ge 0$, that
\[\E(S^c_{\alpha,N})=\frac{1}{N}\sum_{n=1}^N \E(I(\mathbb{Z}^c_{\alpha, n}))=\sum_{\substack{d\ge 1;\\ \gcd(c,d)=1}}\frac{\mu(d)}{d^2}+\frac{1}{\sqrt{\alpha(1-\alpha)}}\, O\Big(\frac{1}{n^{1/4}}\Big)\,,\quad\text{as $n\to\infty$.}\]

Now, since
\[
\int_0^1 \frac{1}{\sqrt{\alpha (1-\alpha)}} \, f_{1/c+k, 1/c+q}(\alpha) \, d \alpha<+\infty\,,
\]
if we start from a point $(a_0,b_0)=(1+kc,1+qc)$ with both $k,q\ge 1$, we deduce
that
\[\E(Q_N^c)=\Delta(c)+O\Big(\frac{1}{N^{1/4}}\Big)\,, \quad \mbox{as $N \to \infty$}\,.\]

Moreover, it can be shown that, starting from a point   $(a_0,b_0)=(1+kc,1+qc)$ with both $k,q\ge 1$,
\[\V(Q_N^c)=O\Big(\frac{1}{N^{1/4}}\Big)\,, \quad \mbox{as $N \to \infty$}\,.\]

The second moment method (Proposition \ref{prop:Borel}) then gives, as in the case $c=1$ and  starting from a point   $(a_0,b_0)=(1+kc,1+qc)$ with both $k,q\ge 1$, that
\[
\lim_{N\to \infty} Q_N^c=\Delta(c)\,, \quad \mbox{almost surely}\,.\]

To get the result of Theorem \ref{teor:polya from (1,1) step c, bis} (with starting point $(a_0,b_0)=(1,1)$), we just need to appeal to Remark \ref{remark: walk enters comfort zone}.

\begin{remark}\label{remark: walk enters comfort zone}
For P\'{o}lya's walk $\mathbb{Y}_n=((a_n,b_n))_{n \ge 0}$ starting at $(a_0,b_0)=(1,1)$ and with step of size $c\ge 2$, we have
\[\begin{aligned}
\P (\mbox{first $n$ steps are upwards} )&=\P(a_n = 1 + c n, b_n=1)\\&=\prod_{j=1}^n \frac{1+c(j-1)}{2+c(j-1)}
=\frac{\Gamma(1/c+n) \Gamma(2/c)}{\Gamma(1/c) \Gamma(2/c+n)}\,. \end{aligned}\]
Since, for $x \ge 0$,
\[
\Gamma(n+x)\sim \Gamma(n) n^x\,, \quad \mbox{as $n \to \infty$}\,,\] we see that
\[
\P(\mbox{first $n$ steps are upwards})\sim \Big(\frac{\Gamma(2/c)}{\Gamma(1/c)} \Big) \, \frac{1}{n^{1/c}}\,,\quad\mbox{as $n \to \infty$}\,.
\]
In particular,  $\P(\mbox{first $n$ steps are upwards})$ tends to 0 as $n\to \infty$. The same observations apply to the case where the first steps are rightwards.
\end{remark}

\subsubsection{General starting point $(a_0,b_0)$.} In the general case, when  the starting point $(a_0,b_0)$ of the P\'{o}lya walk $(\mathbb{Y}^c_n)_{n\ge 0}$  with up and right steps of size $c$ is not necessarily $(1,1)$, by appealing to the density result of Proposition \ref{prop:density general steps and starting point} instead of Proposition \ref{prop:density if step c} of Section~\ref{seccion:general density result} and arguing as in the case  $(a_0,b_0)=(1,1)$ above, we obtain the following.

\begin{thm}\label{teor:polya from (a_0,b_0) steps generales}
Fix integers $a_0, b_0\ge1$ and $c\ge 1$ and consider the  P\'{o}lya walk $(\mathbb{Y}_n)_{n \ge 0}$ with up-step $(0,c)$ and right-step $(c,0)$ starting from $(a_0,b_0)$. Then we have that
\[\lim_{N \to \infty} Q^c_N=\Delta(a_0,b_0;c,c)\, , \quad \mbox{almost surely}\, .\]
\end{thm}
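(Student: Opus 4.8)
The plan is to mirror the proof of Theorem~\ref{teor:polya from (1,1) step c, bis}, replacing the density input of Proposition~\ref{prop:density if step c} by its general-starting-point counterpart, Proposition~\ref{prop:density general steps and starting point}, and then to reduce the arbitrary initial condition to the ``comfort zone'' $a_0>c$, $b_0>c$ in which the relevant beta integrals converge.

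First I would obtain, for the $\alpha$-random walk $(\mathbb{Z}^c_{\alpha,n})_{n\ge 0}$ started at a general $(a_0,b_0)$, an estimate for $\E\big(I(\mathbb{Z}^c_{\alpha,n})\big)$ analogous to the one in the $(1,1)$ case. Writing $\gcd(a_0+lc,b_0+(n-l)c)=\gcd(a_0+lc,a_0+b_0+nc)$ and inserting the M\"obius representation \eqref{eq:representation of indicator of gcd} with $k=1$, one reduces $\E\big(I(\mathbb{Z}^c_{\alpha,n})\big)$ to a sum over divisors $d\mid a_0+b_0+nc$ of inner binomial sums restricted to $\{l:d\mid a_0+lc\}$. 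The genuinely new feature, absent when $(a_0,b_0)=(1,1)$, is that $\gcd(c,d)$ need no longer equal $1$: the congruence $lc\equiv -a_0\ (\mathrm{mod}\ d)$ is solvable only when $g:=\gcd(c,d)$ divides $a_0$, in which case, since $\gcd(c/g,d/g)=1$, its solutions form a single residue class modulo $d/g$. Applying Lemma~\ref{lemma:binomial probabilities residue class} with modulus $d/g$ then yields a main term $g/d$ per admissible divisor, with the usual error $\tfrac{1}{\sqrt{\alpha(1-\alpha)}}O(1/\sqrt n)$; summing and using $\tau(\cdot)=O(n^{1/4})$ gives
\[
\E\big(I(\mathbb{Z}^c_{\alpha,n})\big)=L_n+\frac{1}{\sqrt{\alpha(1-\alpha)}}\,O\!\Big(\frac{1}{n^{1/4}}\Big),\qquad n\to\infty,
\]
with an $\alpha$-free main term $L_n$. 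Averaging over $n$, the Ces\`aro means of $L_n$ converge, by Proposition~\ref{prop:density general steps and starting point}, exactly to the relative density $\Delta(a_0,b_0;c,c)$, so that $\E(S^c_{\alpha,N})=\Delta(a_0,b_0;c,c)+\tfrac{1}{\sqrt{\alpha(1-\alpha)}}O(N^{-1/4})$. The two-point covariance of $I(\mathbb{Z}^c_{\alpha,n})$ and $I(\mathbb{Z}^c_{\alpha,m})$ is handled by the same split-step argument as in Lemma~\ref{lemma:mean variance Zan}, leading to $\V(S^c_{\alpha,N})=\tfrac{1}{\alpha(1-\alpha)}O(N^{-1/4})$.

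Next I would transfer these bounds to P\'olya's walk through the mixture representation \eqref{eq:mixture of random walks one step c}, whose de Finetti measure is now $f_{a_0/c,b_0/c}(\alpha)\,d\alpha$. Integrating the mean and variance estimates against this density, exactly as in Corollary~\ref{cor:bounds for mean and var of PW} (and bounding the extra term $\V(H_N)$ of Lemma~\ref{lemma:media y var de QN} by $\int_0^1(\E(S^c_{\alpha,N})-\Delta(a_0,b_0;c,c))^2 f_{a_0/c,b_0/c}\,d\alpha$, as the mean minimizes square deviation), gives $\E(Q^c_N)=\Delta(a_0,b_0;c,c)+O(N^{-1/4})$ and $\V(Q^c_N)=O(N^{-1/4})$, \emph{provided} the beta integrals \eqref{eq:integral finita Beta con raices} and \eqref{eq:integral finita Beta sin raices} are finite. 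By those criteria this is legitimate precisely when $a_0/c>1$ and $b_0/c>1$, i.e.\ in the comfort zone $a_0>c$, $b_0>c$; there the second moment method of Proposition~\ref{prop:Borel} yields $\lim_N Q^c_N=\Delta(a_0,b_0;c,c)$ almost surely.

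Finally I would remove the restriction $a_0,b_0>c$. Since the mixing variable $L^c$ is $\textsc{beta}(a_0/c,b_0/c)$-distributed, it lies in $(0,1)$ almost surely, so the walk makes infinitely many steps in each direction; in particular it almost surely performs at least one right- and one up-step, after which both coordinates exceed $c$ (as in Remark~\ref{remark: walk enters comfort zone}). Let $\tau<\infty$ be the first such time. By the strong Markov property the walk after $\tau$ is a fresh P\'olya walk of step $c$ started at $(a_\tau,b_\tau)=(a_0+kc,\,b_0+qc)$ with $k,q\ge 1$, hence in the comfort zone, giving the almost sure limit $\Delta(a_\tau,b_\tau;c,c)$. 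The key arithmetic point is the shift-invariance $\Delta(a_0+kc,b_0+qc;c,c)=\Delta(a_0,b_0;c,c)$: translating the start by multiples of $c$ preserves the residues of $a_0,b_0$ modulo every prime dividing $c$ (which is all that the visibility of grid points records for such primes), while it averages freely over the primes coprime to $c$. Since the finite prefix of length $\tau$ contributes $O(\tau/N)\to 0$ to the Ces\`aro average, $\lim_N Q^c_N=\Delta(a_0,b_0;c,c)$ almost surely for all $a_0,b_0\ge 1$. I expect the main obstacle to be that first computational step: correctly accounting for the divisors $d$ with $\gcd(c,d)>1$ and verifying, via Proposition~\ref{prop:density general steps and starting point}, that the resulting $\alpha$-free main term Ces\`aro-averages to $\Delta(a_0,b_0;c,c)$.
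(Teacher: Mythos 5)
Your proposal is correct and takes essentially the same route as the paper: mean and variance estimates for the step-$c$ $\alpha$-walks via the M\"obius expansion and Lemma \ref{lemma:binomial probabilities residue class}, transfer to P\'olya's walk through the $\textsc{beta}(a_0/c,b_0/c)$ mixture, the second moment method in the comfort zone $a_0>c$, $b_0>c$, and a restart argument for the remaining starting points. In fact the paper's own proof is only a one-line reduction (``argue as in the $(1,1)$ case, appealing to Proposition \ref{prop:density general steps and starting point} instead of Proposition \ref{prop:density if step c}''), and your write-up correctly supplies the two details it leaves implicit: the treatment of divisors $d$ with $\gcd(d,c)>1$ (the congruence $lc\equiv -a_0 \bmod d$ is solvable iff $\gcd(d,c)\mid a_0$, giving main term $\gcd(d,c)/d$ via the modulus $d/\gcd(d,c)$), and the shift-invariance $\Delta(a_0+kc,b_0+qc;c,c)=\Delta(a_0,b_0;c,c)$ needed once the walk has entered the comfort zone.
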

The  almost sure limit $\Delta(a_0,b_0;c,c)$ is given by the expression \eqref{eq:formula for big Delta con mu y c} below. 

\subsection{A further extension of Dirichlet's density result}\label{seccion:general density result} For integers $a_0, b_0, r_0, u_0\ge1$, we denote
\begin{equation}\label{eq:formula for big Delta con mu}
\Delta(a_0,b_0;r_0,u_0)=\sum_{\substack{d \ge 1;\\ \gcd(d,r_0)\mid a_0;\\ \gcd(d,u_0)\mid b_0}}
\frac{\mu(d)}{d^2}\gcd(d,r_0)\gcd(d,u_0)\, .
\end{equation}
Below, see \eqref{eq:formula for big Delta}, we exhibit an alternative expression for $\Delta(a_0,b_0;r_0,u_0)$ in terms of primes numbers dividing or not the parameters $a_0$, $b_0$, $  r_0$ and $u_0$.

As we have already mentioned, for each integer $c \ge 1$, $\Delta(1, 1; c,c)$ equals the (relative) density $\Delta(c)$ of Section~\ref{sec:extension 1 de Dirichlet}.
Observe also that $\Delta(a_0,b_0;1,1)=1/\zeta(2)$  and, more generally, that
\begin{equation}\label{eq:formula for big Delta con mu y c}
\Delta(a_0,b_0;c,c)=\sum_{\substack{d \ge 1;\\ \gcd(d,c)\mid \gcd(a_0,b_0)}}
\frac{\mu(d)}{d^2}\gcd(d,c)^2\, .
\end{equation}
But  see also alternatively \eqref{eq:formula for big Delta con mu y c bis}.


%

Analogously as in Proposition  \ref{prop:density if step c}, we have the following density result.
\begin{prop}\label{prop:density general steps and starting point}
For integers $a_0, b_0, r_0, u_0\ge1$,
\begin{equation}\label{eq:density general steps and starting point}
\lim_{N\to \infty}\frac{1}{N^2}\#\big\{0\le n,m\le N: (a_0+n r_0,b_0+m u_0)\in \mathcal{V}\big\}=\Delta(a_0, b_0, r_0, u_0)\,.\end{equation}
\end{prop}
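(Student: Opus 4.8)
The plan is to mirror the derivation of Proposition~\ref{prop:density if step c}, replacing the single step $c$ by the two independent parameters $r_0,u_0$ in the two coordinates and the base point $(1,1)$ by $(a_0,b_0)$. Using the Möbius representation \eqref{eq:representation of indicator of gcd} with $k=1$, I would first expand
\[
\#\big\{0\le n,m\le N:\gcd(a_0+nr_0,\,b_0+mu_0)=1\big\}
=\sum_{0\le n,m\le N}\ \sum_{\substack{d\mid a_0+nr_0\\ d\mid b_0+mu_0}}\mu(d).
\]
Interchanging the order of summation, the two coordinates decouple, and the count becomes $\sum_{d\ge1}\mu(d)\,P_d(N)\,Q_d(N)$, where $P_d(N)=\#\{0\le n\le N: d\mid a_0+nr_0\}$ and $Q_d(N)=\#\{0\le m\le N: d\mid b_0+mu_0\}$.

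The arithmetic heart of the argument is the count $P_d(N)$. The congruence $nr_0\equiv -a_0\pmod d$ is solvable if and only if $\gcd(d,r_0)\mid a_0$, and in that case its solutions form a single arithmetic progression of common difference $d/\gcd(d,r_0)$; hence
\[
P_d(N)=\frac{\gcd(d,r_0)}{d}\,N+O(1),
\]
while $P_d(N)=0$ when $\gcd(d,r_0)\nmid a_0$. The analogous statement holds for $Q_d(N)$ with $(r_0,a_0)$ replaced by $(u_0,b_0)$. This is exactly where the factors $\gcd(d,r_0)\gcd(d,u_0)$ and the divisibility constraints $\gcd(d,r_0)\mid a_0$, $\gcd(d,u_0)\mid b_0$ appearing in the definition \eqref{eq:formula for big Delta con mu} of $\Delta(a_0,b_0;r_0,u_0)$ are produced; it specializes to the coprime case $\gcd(d,c)=1$ of Proposition~\ref{prop:density if step c}, where every gcd equals $1$.

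Finally I would divide by $N^2$ and pass to the limit by dominated convergence, exactly as in the proof of Proposition~\ref{prop:density if step c}. Writing $g_r=\gcd(d,r_0)$ and $g_u=\gcd(d,u_0)$ and discarding the (vanishing) terms that violate the solvability conditions, one has
\[
\frac{1}{N^2}\#\{\cdots\}=\sum_{\substack{d\ge1;\ g_r\mid a_0;\ g_u\mid b_0}}\frac{\mu(d)}{d^2}\,g_r g_u\;\Big[\frac{d}{g_r}\frac{P_d(N)}{N}\Big]\Big[\frac{d}{g_u}\frac{Q_d(N)}{N}\Big].
\]
For each fixed $d$ the two bracketed factors tend to $1$ as $N\to\infty$. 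A nonzero $P_d(N)$ forces $d\le a_0+Nr_0$, whence $\tfrac{d}{g_r}\tfrac{P_d(N)}{N}\le 1+\tfrac{d/g_r}{N}\le a_0+r_0+1$ uniformly in $N\ge 1$ (and similarly for the $Q$-factor); together with $g_rg_u\le r_0u_0$ this bounds the general term, for all $N$, by $C\,r_0u_0/d^2$ for an absolute constant $C$, a summable dominating function. Dominated convergence then yields the limit $\sum_{d}\tfrac{\mu(d)}{d^2}g_rg_u=\Delta(a_0,b_0;r_0,u_0)$.

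The only genuine subtlety---the main obstacle---is to keep the normalized counts uniformly bounded over the whole range of $d$, including the regime where $d$ is large and $P_d(N),Q_d(N)\in\{0,1\}$, so that a single summable dominating function works. Isolating the nonvanishing range $d\le\min(a_0+Nr_0,\,b_0+Nu_0)$ via the solvability condition is precisely what legitimizes the domination, in the same way the indicator $\uno_{\{1\le d\le 1+Nc\}}$ does in Proposition~\ref{prop:density if step c}.
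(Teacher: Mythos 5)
Your proposal is correct and is essentially the paper's own argument: the paper proves Proposition~\ref{prop:density general steps and starting point} by declaring it ``analogous'' to Proposition~\ref{prop:density if step c}, and your proof is precisely that analogue --- M\"obius expansion, decoupling into the two one-dimensional counts, the solvability criterion $\gcd(d,r_0)\mid a_0$ with solutions forming a progression of modulus $d/\gcd(d,r_0)$, and dominated convergence with the same truncation device as $(\star)$, $(\star\star)$ there. The only cosmetic slip is calling the dominating constant ``absolute''; it depends on $a_0,b_0,r_0,u_0$, which is harmless since these are fixed.
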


%

%

\

Next we discuss an alternative expression \eqref{eq:formula for big Delta} of $\Delta(a_0,b_0;r_0,u_0)$ in terms of primes. Let us denote by $A$ the set of primes that divide $a_0$; write analogously $B$, $ R$ and $U$ for the sets of primes dividing $b_0$, $r_0$ and $u_0$, respectively. Finally, denote $H=((R\setminus U)\cap A)\cup ( (U\setminus R)\cap B)$. Then we have that
\begin{equation}\label{eq:formula for big Delta}
\Delta(a_0, b_0; r_0, u_0)=\delta_1(\gcd(a_0,b_0;r_0,u_0))
\times \prod_{p \in H} \Big(1-\frac{1}{p}\Big)
\times \prod_{p \,\nmid \,{\rm lcm}(r_0,u_0)}\Big(1-\frac{1}{p^2}\Big)\,.
\end{equation}

This gives in particular that
\begin{equation}\label{eq:formula for big Delta con mu y c bis}\Delta(a_0,b_0;c,c)=\delta_1(\gcd(a_0,b_0,c))\, \Delta(c)\,,\end{equation}
so in the case of common jump $c$ upwards and rightwards, the density is either $0$ or~$\Delta(c)$.

\begin{proof}[Proof of the representation \eqref{eq:formula for big Delta}]
Since $\mu(d)$ is 0 unless $d$ is 1 or a product of distinct primes, $\Delta(a_0,b_0;r_0,u_0)$ depends only of the primes that divide each of the parameters $a_0, b_0, r_0, u_0$, and not on their multiplicities as divisors. We may then assume that the numbers~$d$ in the sum defining $\Delta(a_0,b_0;r_0,u_0)$, aside from 1,  are all products of distinct  primes. And thus, comparing both sides of \eqref{eq:formula for big Delta}, we may assume that $a_0, b_0, r_0, u_0$ are all products of distinct primes, or 1.

For a set $Q$ of prime numbers, we let $\Pi(Q)$ be the set consisting of 1 and all the products of distinct primes extracted from $Q$.
Recall that, if $g$ is a multiplicative function, then
\[
(\flat)\qquad \sum_{d \in \Pi(Q)}\mu(d) \,g(d)=\prod_{p\in Q}(1-g(p))\,.\]

The sets of primes $A$ and $B$ determine the partition of $\N$ consisting of the 4 blocks $A\setminus B$, $ B\setminus A$, $ A\cap B$ and $E=\N\setminus (A\cup B)$. Some of these blocks could be empty, for instance $A\cap B$ is empty if $\gcd(a_0,b_0)=1$, and $A \setminus B$ is empty if $a_0 \mid b_0$. Moreover, the sets $A$ or $B$ are empty if $a_0=1$ or $b_0=1$.

Likewise, the sets of primes $R$ and $U$  determine the partition of $\N$ consisting of $R\setminus U$, $ U\setminus R$, $ R\cap U$ and $W=\N\setminus (R\cup U)$.

The common refinement of these two partitions is a partition of $\N$ with 16 (pairwise disjoint) blocks, some of which could be, of course, empty.

Let $\mathcal{D}$ denote the subset of $\N$ given by
\[
\mathcal{D}=\{d\ge 1: \mbox{$d$ is 1 or a product of distinct primes}, \, \, \gcd(d,r_0)|a_0 \,\, \mbox{and} \,\,\gcd(d,u_0)|b_0 \}.
\]
This is the set of $d$'s that conform the sum in \eqref{eq:formula for big Delta con mu}.

Fix $d\in \mathcal{D}$. And suppose, as illustration, that a prime $p$ belongs to $R\setminus U$. That is, $p\mid r_0$ but $p\nmid u_0$. Then, in order to be a prime factor of $d$, we must have $p\mid a_0$, but there is no additional restriction related to $b_0$. (This argument explains blocks $C_1$ and $C_2$ in the display below.)

Arguing analogously with the remaining blocks, one discovers that the prime factors of any $d \in \mathcal{D}$ must belong to the following 6 blocks of the refined partition:
\[\begin{aligned}
C_1&=(R\setminus U)\cap (A\setminus B)\,,  &&C_2=(R\setminus U)\cap (A\cap B)\,,\\
C_3&=(U\setminus R)\cap (B\setminus A)\,,  &&C_4=(U\setminus R)\cap (A\cap B)\,,\\
C_0&=(R\cap U)\cap (A\cap B)\,, &&C_5=W\,.
\end{aligned}
\]
In fact, and conversely, any integer which is a product of distinct primes extracted from $\bigcup_{j=0}^5 C_j$ is in $\mathcal{D}$.

Observe that the set $H$ of \eqref{eq:formula for big Delta} is $H=(C_1\cup C_2)\cup (C_3\cup C_4)$.

Let $d\in \mathcal{D}$ be written as $d=\prod_{j=0}^5 d_j$, where each $d_j$ is either $1$ or a product of distinct primes extracted for $C_j$, i.e., $d_j\in \Pi(C_j)$. This factorization is unique, as the blocks are pairwise disjoint. We have then that
\[
\gcd(d,r_0)=d_1 d_2 d_0 \quad \mbox{and} \quad \gcd(d,u_0)=d_3d_4 d_0
\] and thus
\[
\frac{\mu(d)}{d^2}\gcd(d,r_0)\gcd(d,u_0)=\frac{\mu(d_0)\cdot \mu(d_1)\cdots \mu(d_5)}{d_1\cdots d_4\cdot d_{5}^2 }\,\cdot
\]

Therefore, using $(\flat)$ and the last equality, we have that
\begin{align*}
\sum_{d\in \mathcal{D}}\frac{\mu(d)}{d^2}\gcd(d,r_0)&\gcd(d,u_0)=
\sum_{d_j\in \Pi(C_j); 0\le j \le 5} \frac{\mu(d_0)\cdot \mu(d_1)\cdots \mu(d_5)}{d_1\cdots d_4\cdot d_{5}^2 }\\&=
\Big(\sum_{d\in \Pi(C_0)}\mu(d)\Big)
\Big(\sum_{d\in \Pi(H)}\frac{\mu(d)}{d}\Big)
\Big(\sum_{d\in \Pi(W)}\frac{\mu(d)}{d^2}\Big)\\&=
\delta_1(\gcd(a_0,b_0, r_0, u_0))\prod_{p \in H} \Big(1-\frac{1}{p}\Big)
\prod_{p \,\nmid \,\text{lcm}(r_0,u_0)}\Big(1-\frac{1}{p^2}\Big)\,.\qedhere
\end{align*}
\end{proof}

\section{Questions on more general walks}

1) \textit{Unequal steps}. We may consider a P\'{o}lya walk with rightwards step of size $r_0$ and upwards step of size $u_0$, not necessarily equal, starting from a general point  $(a_0,b_0)\in \N^2$.
We already have a general density result (see Proposition \ref{prop:density general steps and starting point}, and the formulas~\eqref{eq:density general steps and starting point} or~\eqref{eq:formula for big Delta}) for the visible points which such a walk may visit.

This general P\'{o}lya walk corresponds to a P\'{o}lya urn process where, at each stage, $r_0$ amber balls are added if the observed/drawn  ball is amber and $u_0$ blue balls are added if the observed/drawn ball if blue, and where at the beginning   the urn contains $a_0$ amber balls and $b_0$ blue balls.

But in this general situation the sequence of Bernoulli variables $F^{r_0,u_0}_n$ registering whether the $n$-th step is to the right, $F^{r_0,u_0}_n=1$, or up, $F^{r_0,u_0}_n=0$, is \textit{not exchangeable}. In fact, if $(x_1, \ldots, x_n)$ is a list extracted from $\{0,1\}$ and if $t_l=\sum_{j=1}^l x_j$, for $1\le l \le n$, then
\[
\P(F^{r_0,u_0}_1=x_1, \ldots, F^{r_0,u_0}_n=x_n)=\frac{\prod_{j=0}^{t_n-1} (a_0+jr_0)\, \prod_{j=0}^{n-t_n-1} (b_0+ju_0)}{\prod_{j=0}^{n-1} (a_0+b_0+ju_0+(r_0-u_0) t_j)}\,.\]
If $r_0\neq u_0$, the probability above depends not only on $t_n$, the number of $x_j$ which are equal to 1, but on the $t_j$, $1\le j \le n{-}1$, and thus on the order of appearance of the 1's in the list of $x_j$. Actually, the sequence of Bernoulli variables $F^{r_0,u_0}_n$ is exchangeable if and only if $r_0=u_0$, i.e., when the sizes of the rightwards and upwards steps coincide. This is the case of the walk $\mathbb{Y}_n^c$ which we have discussed in Section \ref{section:walk with step c}.

But still, with notation with obvious meaning at this time, one wonders if
\[\lim_{N \to \infty} Q^{r_0,u_0}_N=\Delta(a_0,b_0;r_0, u_0)\, , \quad \mbox{almost surely}\, .\]

\smallskip
(2) \textit{Compensated P\'{o}lya's urn}. What happens in the case when, at each stage of the urn, the ball added is of the other color than the ball drawn? This is an instance of the so called Bernard Friedman's urn, see \cite{Friedman}, \cite{Freedman}. Observe that, with the general notations above, in this model $a_n/(a_n+b_n)$ tends to $1/2$ almost surely.  But, again, the Bernoulli variables registering the rightwards and upwards movements are  \textit{not} exchangeable. Is it  the case that the average visibility  time converges to $1/\zeta(2)$ no matter what the starting point is?

\smallskip
(3)
\textit{Basic three dimensional extension}. Start with an urn with balls of  3 colors: $a,b,c$. The process now is to draw a ball form the urn, note the color of this ball, return it to the urn and  add one ball of the same color to the urn. This determines a corresponding P\'{o}lya's walk in $\{1, 2, \ldots\}^3$. There are now two notions of coprimality for the composition $(a_n,b_n,c_n)$ of the urn at time $n$: coprime  triple, that is, $\gcd(a_n,b_n,c_n)=1$, or pairwise coprime, i.e., $\gcd(a_n,b_n)=\gcd(a_n,c_n)=\gcd(b_n,c_n)=1$. Is it the case that, for a certain  constant $P$, the  proportion of time up to time $N$ that the composition of the urn is a coprime triple converges almost surely to $P$ as $N\to \infty$? For pairwise coprimality, does the same result hold with a certain constant $T$? The constants $P$ and $T$ should be    
\[
P=\prod_{p}\Big(1-\frac{1}{p^3}\Big)=\frac{1}{\zeta(3)} \quad \mbox{and}\quad T=
\prod_{p}\Big(1-3\, \frac{1-1/p}{p^2}-\frac{1}{p^3}\Big).
\]
The number $T$ is the asymptotic proportion of triples of integers that are pairwise coprime (see T\'{o}th \cite{To2004} and Cai--Bach\cite{CB2001}, Theorem 5).

\smallskip

(4) A \textit{general P\'{o}lya urn} has a starting composition of numbers of balls of a finite collection of, say $m$,  different colors $\alpha_1, \alpha_2, \ldots, \alpha_m$, and a $m \times m$ matrix with nonnegative integer entries which codifies how many balls of each color are added of each color at each stage depending of the color of the drawn ball. The evolution of this urn determines a walk in $\{1, 2, \ldots\}^m$.  What happens in this case?

\end{document}